\begin{document}
%\large
\newcommand{\per}{{\rm per}}
\newcommand{\Per}{{\rm Per}}
\newtheorem{teorema}{Theorem}
\newtheorem{lemma}{Lemma}
\newtheorem{utv}{Proposition}
\newtheorem{svoistvo}{Property}
\newtheorem{sled}{Corollary}
\newtheorem{con}{Conjecture}
\newtheorem{zam}{Remark}
\newtheorem{quest}{Question}

\author{Anna A. Taranenko\thanks{Sobolev Institute of Mathematics, Novosibirsk, Russia, taa@math.nsc.ru}}
\title{Transversals, near transversals, and diagonals in iterated groups and quasigroups}
\date{August 19, 2021}
\maketitle
\begin{abstract}
Given a binary quasigroup $G$ of order $n$,  a $d$-iterated quasigroup $G[d]$ is the $(d+1)$-ary quasigroup equal to the $d$-times composition of $G$ with itself. The Cayley table of every $d$-ary quasigroup is a $d$-dimensional latin hypercube. Transversals and diagonals in multiary quasigroups are defined so as to coincide with those in the corresponding latin hypercube.

We prove that if a group $G$ of order $n$ satisfies the Hall--Paige condition, then the number of transversals in $G[d]$ is equal to $ \frac{n!}{ |G'| n^{n-1}} \cdot n!^{d}  (1 + o(1))$ for large $d$, where $G'$ is the commutator subgroup of $G$.
For a general quasigroup $G$, we obtain similar estimations on the numbers of transversals and near transversals in $G[d]$  and develop a method for counting diagonals of other types in iterated quasigroups.
\end{abstract}

\textbf{Keywords:} transversal, near transversal, latin hypercube, composition of quasigroups, Hall--Paige conjecture

\textit{MSC 2010:} 	05B15, 05D15, 05A16, 05E15, 20N05

\section{Introduction}

A \textit{latin square} of order $n$ is the Cayley table of a binary quasigroup of order $n$, i.e., an $n \times n$-table  filled by $n$ symbols so that each line (row or column) contains all symbols.  A \textit{transversal} in a latin square of order $n$ is a set of $n$ entries filled by all different symbols such that there is exactly one entry from the set in each row and each column.

In algebraic terms, transversals in binary groups and quasigroups $G$ correspond to complete mappings. A bijection $\varphi: G \rightarrow G$ is called a complete mapping if the mapping $x \mapsto x* \varphi(x)$ is bijective.

The most intriguing conjecture on transversals in latin squares belongs to Ryser~\cite{ryser.conj}.

\begin{con}[Ryser's conjecture]
Every latin square of odd order has a transversal.
\end{con}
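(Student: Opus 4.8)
The plan is to proceed through the algebraic reformulation of transversals and to treat the group case and the general case separately; I should say at the outset that this is Ryser's long-standing conjecture and that I do not have a complete proof, so what follows is the route I would attempt. A transversal of a latin square $L$ of order $n$, regarded as the Cayley table of a quasigroup $(G,*)$, is a permutation $\sigma$ matching rows to columns such that the $n$ symbols $i * \sigma(i)$ are pairwise distinct, i.e.\ $i \mapsto i * \sigma(i)$ is a bijection onto the symbol set. When $L$ is the multiplication table of a \emph{group}, this is exactly a complete mapping in the sense defined above, so I would first dispose of the group case by invoking the Hall--Paige criterion: a finite group admits a complete mapping if and only if the product of all its elements, taken in some order, lies in the commutator subgroup $G'$, a condition automatically satisfied whenever the Sylow $2$-subgroup is trivial or noncyclic. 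Every group of odd order has a trivial Sylow $2$-subgroup, so its Cayley table has a transversal; this settles the conjecture for group-based squares and isolates the genuine difficulty in the non-group case.

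For an arbitrary latin square of odd order the plan is to pass from existence to counting: let $T(L)$ denote the number of transversals and try to prove $T(L)>0$ by bounding $T(L)$ from below. I would write $T(L)$ as a signed inclusion--exclusion sum over partial diagonals, equivalently as a permanent-type expression attached to $L$, and attempt a lower bound via the convexity circle of inequalities of van der Waerden--Falikman--Egorychev type, aiming to show that the dominant contribution of order $(n!)^2/n^n \approx (n/e^2)^n$ cannot be cancelled. The parity of $n$ should enter precisely through the sign pattern of the inclusion--exclusion, which degenerates for the even-order abelian examples such as $\mathbb{Z}_{2m}$ that possess no transversal; keeping the odd-order hypothesis alive throughout the estimate is exactly what would force $T(L)$ to remain positive.

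An alternative, and in my view more promising, route is the probabilistic absorption method: construct a near transversal greedily, covering all but a bounded number of symbols while keeping the uncovered configuration quasirandom, and then complete it using a pre-reserved absorbing structure, with the odd-order hypothesis guaranteeing that the residual parity obstruction can be resolved. I would pair this with the counting estimate above to obtain a quantitative lower bound whose order of magnitude is consistent with the group case and with the iterated-quasigroup asymptotics announced in the abstract.

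The main obstacle --- and the reason the conjecture is still open in full --- is uniformity over \emph{all} latin squares of a given odd order: the algebraic main term is robust for highly structured, group-like squares, but the error terms in the inclusion--exclusion are not known to be controllable for adversarially chosen $L$, and the absorption argument is inherently asymptotic, leaving the small orders and the structurally extremal squares untreated. Bridging the gap between an asymptotic lower bound for large $n$ and the exact, for-every-odd-$n$ statement is precisely where I expect the argument to break down, and it is where any genuine proof must concentrate its effort.
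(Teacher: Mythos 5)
This statement is not a theorem of the paper at all: it is Ryser's conjecture, which the paper records as an open problem (Conjecture~1) and never proves. The paper's actual results concern iterated quasigroups $G[d]$ and groups satisfying the Hall--Paige condition; nothing in the paper resolves, or claims to resolve, the existence of transversals in an arbitrary latin square of odd order. So there is no ``paper's own proof'' to compare against, and your proposal should be judged on whether it closes the conjecture itself. It does not, and you say so honestly.

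The one solid piece of your argument is the group case: a group of odd order has trivial Sylow $2$-subgroups, hence satisfies the Hall--Paige condition, hence (by the Hall--Paige theorem, cited in the paper as Theorem~1) its Cayley table has a transversal. That is correct, but it is a very special case --- most latin squares of odd order are not Cayley tables of groups, and the algebraic machinery (commutator subgroups, products of all elements, the D\'enes--Hermann theorem) has no analogue for general quasigroups; indeed the paper itself points out (Example~3 and the remarks after Corollary~7) that for general quasigroups the equivalence-class structure is not governed by any algebraic invariant. Your two proposed routes for the general case are sketches, not arguments: the inclusion--exclusion/permanent route gives no mechanism by which the odd-order hypothesis enters (permanent-type lower bounds do not see the parity of $n$, and the known counterexamples $\mathbb{Z}_{2m}$ show any such bound must fail for even order, so the ``sign pattern'' claim would need to be made precise and currently cannot be); the absorption route is inherently asymptotic and, even in its strongest published form (Montgomery's recent proof that every large latin square of odd order has a transversal), leaves all small odd orders untouched. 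The gap, as you yourself identify, is the entire non-group, non-asymptotic content of the conjecture --- which is to say, the conjecture itself.
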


Another celebrated conjecture is usually attributed to Brualdi and Stein~\cite{BruRys.combmatth, stein.gentrans}.

\begin{con}[Brualdi's conjecture]
Every latin square has a near transversal. 
\end{con}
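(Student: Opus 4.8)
The final statement is the Brualdi--Stein conjecture, which is open in full generality; accordingly, what follows is a strategy together with an honest account of where it breaks down, rather than a finished proof.

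The plan is to work through the standard reformulation in terms of edge-colourings. A latin square $L$ of order $n$ is the same object as a proper edge-colouring of the complete bipartite graph $K_{n,n}$ with $n$ colours: one vertex class indexes the rows, the other the columns, the edge joining row $r$ to column $c$ is coloured by the symbol $L(r,c)$, and the latin condition is precisely that the colouring is proper. Under this dictionary a transversal corresponds to a rainbow perfect matching and a near transversal to a rainbow matching of size $n-1$. So the goal becomes: every proper $n$-edge-colouring of $K_{n,n}$ admits a rainbow matching leaving exactly one row and one column uncovered.

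First I would produce a rainbow matching covering all but $o(n)$ of the vertices by a semi-random selection: iteratively add edges of previously unused colours while tracking the numbers of used rows, columns and colours, so that standard concentration (a R\"odl-nibble or random-greedy analysis) guarantees the process does not stall until only $o(n)$ rows and columns remain free. The substantive step is the second phase, where this almost-perfect rainbow matching must be upgraded to one of size exactly $n-1$. For this I would attempt the absorption method: before running the nibble, reserve a small family of short rainbow augmenting configurations (rainbow paths and alternating cycles supported on a set-aside collection of rows, columns and colours) engineered so that, for any admissible small set of leftover rows, columns and colours, one can re-route inside this family to absorb almost all of them; splicing the absorber into the nibble matching should then reduce the deficiency to a constant, after which a bounded local exchange closes the gap to $n-1$.

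The hard part, and the reason the conjecture has resisted proof, is exactly this endgame. The colour constraint links the two sides of the bipartition: any colour reserved for absorption is thereby withdrawn from the main matching, so the row/column budget and the colour budget must be balanced simultaneously, and it is not clear that a rainbow absorber can be made dense enough to repair \emph{every} admissible leftover while remaining colour-disjoint from the nibble matching. This is precisely the barrier that keeps the best unconditional bounds at partial transversals of size $n - O(\log n / \log\log n)$ rather than $n-1$. I therefore expect this final step to be the genuine obstacle, and for an arbitrary latin square I would not claim to carry it out.
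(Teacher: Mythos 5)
This statement is Conjecture~2 of the paper --- the Brualdi--Stein conjecture --- and the paper neither proves it nor claims to: it is cited as a well-known open problem, invoked only conditionally (in Theorem~\ref{nearintro}, where its truth would upgrade ``near transversals in $G[d]$ for all $d \geq d_0$'' to ``for all $d$''), with the group case attributed to the literature. So there is no proof in the paper to compare against, and your refusal to manufacture one is the correct response. Your account is accurate on the points that can be checked: the dictionary between latin squares, proper $n$-edge-colourings of $K_{n,n}$, and rainbow matchings is right (a diagonal with at least $n-1$ distinct symbols is equivalent to a partial rainbow matching of size $n-1$, since the one uncovered row and column meet in a single cell); the semi-random-plus-absorption strategy is the standard modern attack; and you correctly locate the obstruction in the endgame, where the colour budget and the row/column budget interact, which is indeed why the best unconditional bound at the time of this paper stood at partial transversals of size $n - O(\log n/\log\log n)$ rather than $n-1$.

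One remark worth making, since it contrasts with your combinatorial strategy: the paper does prove an unconditional existence result for near transversals, but only for the special class of iterated quasigroups $G[d]$, and by an entirely different, algebraic route. It encodes $U$-diagonals of type $V$ in the transition matrix $T$ of $G$, observes that $G[d]$ has transition matrix $T^d$ (Lemma~\ref{diagiter}), and applies Perron--Frobenius theory: Proposition~\ref{Vinunits} guarantees that every unit of the equivalence class $\mathcal{U}_1$ contains tuples of the form $W_i(b)$, so for $d$ large enough $G[d]$ has a near transversal, with the asymptotic count in Theorem~\ref{nearintro}. This buys exact asymptotics and unconditional existence, but only for Cayley tables of compositions $G[d]$ with $d$ large; your nibble/absorption plan targets arbitrary latin squares, where the endgame you identify remains the genuine barrier. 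Nothing in the paper's machinery circumvents it.
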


Here a \textit{near transversal} in a latin square of order $n$ is a diagonal that contains at least $n-1$ different symbols.

It happens that both of these conjectures are true for groups.  The following statement,  known as the Hall--Paige conjecture, describes all groups with transversals in their Cayley tables.

\begin{teorema}[Hall--Paige conjecture] \label{HPconj}
The Cayley table of a group $G$ has a transversal if and only if every Sylow $2$-subgroup of $G$ is trivial or non-cyclic. 
\end{teorema}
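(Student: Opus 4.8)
The statement is an \emph{if and only if}, and its two directions have very different natures; the plan is to prove necessity by an elementary abelianization argument and to obtain sufficiency by reduction to finite simple groups.

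For necessity, suppose the Cayley table of $G$ has a transversal, i.e.\ $G$ admits a \emph{complete mapping}: a bijection $\varphi$ such that $x\mapsto x\varphi(x)$ is also a bijection. The first step is to pass to the abelian quotient $G/G'$, where products are order-independent; write $\bar x$ for the image of $x$. Bijectivity of $\varphi$ gives $\prod_{x\in G}\overline{\varphi(x)}=\prod_{x\in G}\bar x$, and bijectivity of $x\mapsto x\varphi(x)$ gives $\prod_{x\in G}\overline{x\varphi(x)}=\prod_{x\in G}\bar x$. Since $G/G'$ is abelian the second left-hand side equals $\bigl(\prod_{x\in G}\bar x\bigr)^2$, so $\prod_{x\in G}\bar x=\bigl(\prod_{x\in G}\bar x\bigr)^2$ and hence $\prod_{x\in G}x\in G'$. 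This is the \emph{Hall--Paige condition}. The second step is a structural group-theoretic lemma: in the abelian group $G/G'$ every element of odd order cancels with its inverse, so $\prod_{x\in G}\bar x$ equals the product of all involutions of $G/G'$, and this product is nontrivial precisely when $G/G'$ has a unique involution, i.e.\ when its Sylow $2$-subgroup is cyclic and nontrivial. Translating this between $G$ and $G/G'$ (using Burnside's normal $2$-complement theorem for the forward translation) shows that $\prod_{x\in G}x\in G'$ if and only if every Sylow $2$-subgroup of $G$ is trivial or non-cyclic, which completes the necessity direction.

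For sufficiency, the converse and the substantive part of the original conjecture, the plan is reduction to simple groups. I would first establish extension lemmas: the Hall--Paige condition passes to quotients, and if $N\trianglelefteq G$ with $N$ and $G/N$ suitably admitting complete mappings, then $G$ does too. These force a minimal counterexample to be simple. A non-abelian finite simple group coincides with its commutator subgroup, hence automatically satisfies the Hall--Paige condition and has non-cyclic Sylow $2$-subgroups, while the abelian simple groups $\mathbb{Z}_p$ with $p$ odd carry the complete mapping $x\mapsto x$. It therefore remains to exhibit a complete mapping in every non-abelian finite simple group.

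This last step is the main obstacle and is where the argument stops being elementary. Invoking the classification of finite simple groups, I would proceed family by family --- the alternating groups, the groups of Lie type, and the sporadic groups --- constructing a complete mapping in each, using the rich internal structure (for the Lie-type case a convenient automorphism or a large abelian subgroup) for the infinite families and direct or computer-assisted checks for the finitely many sporadic and small exceptional cases. This is precisely the program completed by Wilcox, Evans, and Bray, and it is the only deep ingredient; everything preceding it is short and self-contained. In the sequel I shall use the theorem in this form without reproving it.
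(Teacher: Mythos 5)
The paper offers no proof of Theorem~\ref{HPconj} at all: it is imported as an external result, proved (modulo the classification of finite simple groups) by Wilcox, Evans, and Bray. So the sufficiency half of your proposal --- reduction to simple groups followed by an appeal to exactly those papers --- is consistent with how the paper itself treats the statement, and the only part that can be checked as mathematics is your necessity argument. Its first step (a complete mapping forces $\prod_{x\in G}x\in G'$) is the classical argument and is correct.

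Your ``structural lemma,'' however, is false as stated. The product $\prod_{x\in G}\bar x$ is not the product of the involutions of $G/G'$: the multiset $\{\bar x : x\in G\}$ contains each element of $G/G'$ with multiplicity $|G'|$, so the correct identity is $\prod_{x\in G}\bar x = t^{|G'|}$, where $t$ is the product of all elements (equivalently, of all involutions) of $G/G'$. Take $G=S_4$: then $G'=A_4$, $G/G'\cong\mathbb{Z}_2$ has a unique involution, so your lemma predicts $\prod_{x\in G}\bar x\neq \bar e$, and combined with your first step it would prove that $S_4$ admits no complete mapping --- contradicting the very theorem you are proving, since the Sylow $2$-subgroups of $S_4$ are dihedral of order $8$, hence non-cyclic. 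The same example shows that your claimed equivalence cannot be obtained by ``translating'' the Sylow condition between $G$ and $G/G'$: $S_4$ has non-cyclic Sylow $2$-subgroups while its abelianization $\mathbb{Z}_2$ has a nontrivial cyclic one, so the backward translation fails. The repair is that necessity only needs one direction, and that direction survives: if a Sylow $2$-subgroup of $G$ is nontrivial cyclic, Burnside's normal $2$-complement theorem gives a normal subgroup $N$ of odd order with $G/N$ isomorphic to that Sylow subgroup; since $G/N$ is abelian, $G'\subseteq N$, so $|G'|$ is odd, the exponent is harmless, and $\prod_{x\in G}\bar x = t^{|G'|}=t\neq\bar e$, whence $\prod_{x\in G}x\notin G'$ and no complete mapping exists. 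So restate the lemma with the exponent $|G'|$ (or run the whole computation in $G/N$ rather than $G/G'$), and drop the unneeded converse half of your ``if and only if'' --- that equivalence is true, but it is the D\'enes--Hermann theorem (Theorem~\ref{DHth} in the paper) together with the paper's conditions (1)--(3), not a consequence of your abelianization count.
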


The Hall--Paige conjecture first appeared in~\cite{HallPaige.hyp} and was open for quite a long time. It was proved (modulo the classification of finite simple groups)  in a series of papers of Wilcox~\cite{wilcox.HPred}, Evans~\cite{evans.admsporgr} and Bray~\cite{BCCSZ.hallpaige}. Recently in preprint~\cite{EberManMra.asHallPaige} there was given an alternative (asymptotic) proof for the conjecture. Concerning Brualdi's conjecture, in~\cite{GodHal.grnear} it was proved that the Cayley table of every group contains a near transversal. 

Note that the Hall--Paige conjecture includes the easy observation that the Cayley table of every group of odd order has a transversal. Further, we will say that a group is a \textit{Hall--Paige group} if its Sylow $2$-subgroups are trivial or non-cyclic.

Alongside the existence, it is also interesting to know how many transversals a latin square has. An asymptotic upper bound on the number of transversals was proved in~\cite{my.first} and in~\cite{GlebLur.transup} by another technique. In~\cite{EberManMra.transZn}, Eberhard, Manners, and Mrazovi\'c found the asymptotics of the number of transversals in the  Cayley table of cyclic groups $\mathbb{Z}_n$ of odd order $n$, later Eberhard established it for arbitrary (iterated) abelian groups~\cite{eberhard.moreaddtrip}, and recently these researchers submitted a preprint~\cite{EberManMra.asHallPaige} with a similar result for all groups.

\begin{teorema}[\cite{EberManMra.asHallPaige}] \label{grnasym}
Let $G$ be a Hall--Paige group of order $n$ and $G'$ be the commutator subgroup in $G$.  Then the number of transversals in the Cayley table of $G$ is
$$ \frac{n!}{|G'| n^{n-1}} \cdot n! (e^{-1/2} + o(1)).$$
\end{teorema}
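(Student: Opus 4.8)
The plan is to estimate the number of transversals directly as a weighted sum over permutations and to extract the two constants $\tfrac{n}{|G'|}=|G/G'|$ and $e^{-1/2}$ as, respectively, a leading-order representation-theoretic density and a second-order local-limit correction. Recall that a transversal of the Cayley table of $G$ is the same as a complete mapping, i.e. a permutation $\sigma$ of $G$ for which $g\mapsto g\sigma(g)$ is again a bijection, so
$$ T(G)=\sum_{\sigma\in S_G}\ \prod_{h\in G}\mathbf 1\bigl[\#\{g\in G: g\sigma(g)=h\}=1\bigr]. $$
The baseline heuristic treats the images $g\sigma(g)$ as independent and uniform over $G$: then the probability that they hit every element exactly once is $n!/n^{n}$, and multiplying by the $n!$ choices of $\sigma$ gives $(n!)^2/n^{n}=\frac{n!}{n^{n-1}}\cdot\frac{n!}{n}$. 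The entire problem is to compute the correction to this baseline, which I claim equals $|G/G'|\,e^{-1/2}=\frac{n}{|G'|}e^{-1/2}$ and yields exactly the asserted formula.

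First I would record the clean Fourier characterization: by orthogonality of characters, $g\mapsto g\sigma(g)$ is a bijection if and only if $\sum_{g\in G}\rho\bigl(g\sigma(g)\bigr)=0$ (the zero matrix) for every nontrivial irreducible representation $\rho$ of $G$, since $\sum_{h\in G}\rho(h)=0$ for such $\rho$. Thus
$$ T(G)=\sum_{\sigma\in S_G}\ \prod_{\rho\neq\mathbf 1}\mathbf 1\Bigl[\sum_{g\in G}\rho\bigl(g\sigma(g)\bigr)=0\Bigr], $$
and I would detect each matrix constraint by Fourier inversion, organising $T(G)$ into a main term plus contributions indexed by the representations $\rho$. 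The one-dimensional representations — precisely the characters of the abelianization $G/G'$, of which there are $|G/G'|=n/|G'|$ — give scalar constraints $\sum_g\chi(g)\chi(\sigma(g))=0$ whose combined density is responsible for the factor $\frac{n}{|G'|}$. Here \textbf{the Hall--Paige hypothesis enters}: it is exactly the requirement that the product of all elements of $G$ be trivial in $G/G'$, and this is what keeps the governing main term nonzero (for non--Hall--Paige groups it vanishes, consistently with the absence of transversals).

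The remaining work is to show that representations of dimension at least $2$ contribute only $o(1)$ relative to the main term, and to refine the main terms to second order. The first part is the analytic heart and requires character-sum and concentration estimates, uniform over all Hall--Paige groups, to prove genuine cancellation for the higher-dimensional $\rho$. The second part is a local central limit computation: writing $N_h=\#\{g:g\sigma(g)=h\}$ for a uniformly random permutation $\sigma$, each $N_h$ has mean $1$, and $(N_h)_{h}$ is a weakly dependent, approximately Gaussian array subject to $\sum_h N_h=n$; the probability of the atypical event $\{N_h=1\ \forall h\}$ differs from the independent-uniform value $n!/n^{n}$ by a factor $\exp\!\bigl(-\tfrac12\sum(\text{limiting covariances})\bigr)$, and evaluating this second-cumulant correction produces the universal constant $e^{-1/2}$.

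I expect the main obstacle to be the estimate for the higher-dimensional representations: one must rule out any further main-order contribution and establish true cancellation \emph{uniformly in} $G$, and it is here that the full (classification-dependent) structure of Hall--Paige groups and delicate character bounds appear unavoidable. A secondary difficulty is making the local central limit step rigorous, so that $e^{-1/2}$ comes with an honest multiplicative error $1+o(1)$ rather than merely the correct order of magnitude; this requires controlling the higher cumulants of $(N_h)$ and justifying the Gaussian approximation near the point $(1,\dots,1)$.
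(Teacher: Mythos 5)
This statement is not proved in the paper at all: Theorem~\ref{grnasym} is quoted, with a citation, from the preprint of Eberhard, Manners, and Mrazovi\'c \cite{EberManMra.asHallPaige}, and serves only as background for the paper's own (different) result on iterated groups, Theorem~\ref{grouptransintro}. So there is no internal proof to compare against; your proposal has to stand on its own. What you have written is an accurate reconstruction of the strategy of the cited work: the reformulation of transversals as complete mappings, the representation-theoretic detection of the bijectivity constraint, the identification of the one-dimensional characters of $G/G'$ as the source of the factor $n/|G'|$, the role of the Hall--Paige condition as non-vanishing of the main term (via D\'enes--Hermann, the product of all elements lies in $G'$ exactly when the Hall--Paige condition holds, which matches condition (3) in the paper's table of equivalences), and a local-central-limit correction producing $e^{-1/2}$. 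These structural claims are correct; in particular the characterization that $g\mapsto g\sigma(g)$ is bijective if and only if $\sum_{g}\rho\bigl(g\sigma(g)\bigr)=0$ for every nontrivial irreducible $\rho$ is a valid instance of Fourier inversion on $G$.

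But as a proof the proposal has a genuine gap, and you name it yourself: the entire analytic content is deferred. The claim that representations of dimension at least $2$ contribute only $o(1)$, uniformly over all Hall--Paige groups, is not a routine character-sum estimate --- it is the bulk of the long, classification-dependent argument of \cite{EberManMra.asHallPaige}, and nothing in your sketch indicates how to obtain the required cancellation, or why it should hold uniformly in $n$ and in the group structure. Likewise the local limit step: asserting that the array $(N_h)_{h\in G}$ is weakly dependent and approximately Gaussian, and that the second-cumulant correction equals $e^{-1/2}$ with a \emph{multiplicative} $1+o(1)$ error, is exactly what needs proof; for dependent arrays of this kind, passing from the correct covariance structure to a local limit theorem at the single point $(1,\dots,1)$ requires control of higher cumulants and can fail without it. So the proposal is a correct map of the territory, but the two steps you flag as ``obstacles'' are precisely the theorem; nothing in the present paper (whose methods are Perron--Frobenius asymptotics in the iteration parameter $d$, with $n$ fixed) supplies them, since here the asymptotics are in $n$ with $d=1$.
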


While studying latin squares, we can increase not only their order but also their dimension.  Let a  $d$-dimensional \textit{latin hypercube}  of order $n$  be a $d$-dimensional array of the same order filled by $n$ symbols so that in each line all symbols are different. Latin hypercubes can be considered as the Cayley tables of $d$-ary quasigroups of order $n$. A \textit{transversal} in a latin hypercube of order $n$ is a collection of $n$ entries hitting each hyperplane exactly once and containing all $n$ different symbols of the hypercube. For more formal definitions of latin hypercubes and their transversals see Section~\ref{defsec}.

Wanless~\cite{wanless.surv} generalized Ryser's conjecture on latin hypercubes and proposed that every latin hypercube of odd dimension or odd order has a transversal. As far as we know,  there is no evidence against the generalization of Brualdi's conjecture on latin hypercubes.

\begin{con} \label{multibrualdi}
Every latin hypercube has a near transversal.
\end{con}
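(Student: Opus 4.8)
The plan is to produce, in any $d$-dimensional latin hypercube $L$ of order $n$, a diagonal whose multiset of symbols has at most one repeated value. The case $d=2$ is exactly the classical Brualdi--Stein conjecture, so no elementary argument can succeed in full generality, and any viable approach must at least recover that case and degrade gracefully to it. One should first dispose of the naive dimension reduction: fixing $d-2$ of the coordinates yields a two-dimensional latin subsquare, but a near transversal of such a slice shares those $d-2$ coordinate values and hence fails to meet the hyperplanes of the collapsed axes, so it is not a diagonal of $L$. I would instead work directly with the hypercube, encoding a diagonal as a tuple of permutations $\pi_1=\mathrm{id},\pi_2,\dots,\pi_d\in S_n$ giving the cells $(i,\pi_2(i),\dots,\pi_d(i))$, and defining the \emph{defect} of a diagonal as $n$ minus the number of distinct symbols appearing on it. The target is a diagonal of defect at most $1$; the natural strategy is to start from an arbitrary diagonal, whose defect can be as large as $n-1$, and drive the defect down by local modifications.

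Two complementary lines of attack look most promising. The first is a two-phase \emph{partial diagonal, then completion} argument, in the tradition of the classical partial-transversal lower bounds for latin squares (Koksma, Hatami--Shor): a greedy or entropy-counting argument should yield a partial diagonal of size $n-o(n)$ carrying pairwise distinct symbols, after which one attempts to extend it to a full diagonal while losing at most one symbol. The second, and to my mind the more likely to reach the full $n-1$ bound, is the iterative-absorption and random-greedy machinery that has driven recent progress on rainbow matchings: reformulate a diagonal as a perfect matching in an auxiliary high-dimensional structure whose colours are the $n$ symbols, pre-plant a robust absorber capable of repairing a bounded number of colour collisions, build the bulk of the diagonal by a semi-random nibble, and finally swallow the leftover collisions into the absorber so that at most one survives. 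Both routes ultimately reduce the problem to a switching statement: every diagonal of defect at least $2$ should admit a local rearrangement (for squares, a symbol-alternating cycle through two rows and two columns; here a higher-dimensional analogue) that strictly decreases the defect, or one exhibits a preplanted gadget achieving the same effect.

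The hard part, and where the whole scheme concentrates its difficulty, is that the diagonal constraint is far more rigid in dimension $d\ge 3$ than for squares. Each cell lies in $d$ hyperplanes simultaneously, so a switch that restores the permutation structure along one axis generically destroys it along the other $d-1$; the classical latin-square switch has no clean high-dimensional counterpart, and the auxiliary hypergraph encoding diagonals is $d$-uniform, so K\"onig-type matching duality and augmenting-path arguments do not transfer. I would therefore invest the main effort in a structural \emph{defect-reduction lemma}: show that every diagonal of defect $\ge 2$ contains a bounded set of cells that can be rerouted through a few axes so as to keep all $d$ coordinate-permutations intact while merging two repeated symbol classes. Proving such a lemma in full generality seems out of reach with present tools; a realistic interim target is an asymptotic version valid for all large $n$ via absorption, which, combined with a separate structural or computational treatment of small orders and with a resolution of the $d=2$ case, would at least establish the conjecture for large $n$ in every fixed dimension.
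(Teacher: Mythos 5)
There is a fundamental mismatch to flag first: the statement you were asked about is stated in the paper as a \emph{conjecture}, and the paper contains no proof of it --- the author explicitly notes only that there is no known counterexample. So there is no ``paper proof'' to match; the paper's actual contribution in this direction is Theorem~\ref{nearintro}, which establishes the conjecture only for the special class of $d$-iterated quasigroups $G[d]$ with $d$ sufficiently large, and does so by entirely different means: diagonals are encoded via the transition matrix of $G$, whose powers count $U$-diagonals of each type, and Perron--Frobenius theory (Theorems~\ref{PFform} and~\ref{PerFrob}, together with Propositions~\ref{toconst}, \ref{Vinunits} and~\ref{permclosed} and Lemma~\ref{transclass}) shows that every unit of the relevant equivalence class contains tuples of the form $W_i(b)$, forcing near transversals for all $d \geq d_0$ and even yielding their asymptotic count. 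Nothing in that machinery addresses a general latin hypercube, and nothing in your proposal does either.

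Your text, on its own terms, is a research programme rather than a proof, and by your own admission it does not close. Concretely: (i) the case $d=2$ is the open Brualdi--Stein conjecture, which your argument presupposes rather than resolves --- a ``proof'' of the general statement that has the open base case as a prerequisite proves nothing; (ii) both of your attack lines funnel into the ``defect-reduction lemma,'' which you then declare out of reach, so the argument terminates in an unproved assertion exactly where the difficulty lives (and your own diagnosis of why latin-square switches do not lift to $d \geq 3$ --- each cell lying in $d$ hyperplanes --- is precisely the obstruction you would need to overcome, not merely name); (iii) even the fallback asymptotic route via absorbers and a semi-random nibble is only gestured at: you construct no absorber, verify no codegree or quasirandomness hypotheses for the $d$-uniform auxiliary hypergraph, and give no analysis showing the leftover after the nibble fits the absorber. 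Your preliminary observation that naive dimension reduction fails (a near transversal of a $2$-dimensional slice misses the hyperplanes of the collapsed axes) is correct and worth keeping, but the honest conclusion is that the statement remains a conjecture: neither the paper nor your proposal proves it, and the only rigorous positive result available is the paper's Theorem~\ref{nearintro} for iterated quasigroups.
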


In the present paper we focus on a special class of latin hypercubes corresponding to $d$-iterated quasigroups. Such quasigroups have applications in cryptography (see e.g.~\cite{gligoroski.streamchip}),  and their transversals were previously studied in paper~\cite{my.iter} of the present author.

Given  a binary quasigroup $G$ of order $n$ with the operation $*$,  define the \textit{$d$-iterated quasigroup $G[d]$} to be the $(d+1)$-ary quasigroup of order $n$ such that
$$ G[d] (x_1, \ldots, x_{d+1}) = (\ldots((x_1 * x_2) * x_3)* \ldots * x_{d})*x_{d+1}.$$

There were several results on transversals in iterated groups and quasigroups of small order before. The numbers of transversals in both $d$-iterated groups of order $4$ were calculated in~\cite{my.quasi}, and the numbers of transversals in $d$-dimensional latin hypercubes of orders $2$ and $3$ were found in~\cite{my.obzor}. Moreover, in~\cite[Theorem 26]{my.obzor} it was proved that for all even $d$ every $d$-iterated quasigroup has a transversal. One of the main results of~\cite{my.iter} states that for every binary quasigroup $G$ of order $n$ there is  $c = c(G)$ such that the number of transversals in the $d$-iterated quasigroup $G[d]$ (if it is nonzero) asymptotically equals $c n!^{d} (1 + o(1))$. 

The present paper significantly refines the technique and results from~\cite{my.iter} and gives the exact asymptotic of the number of transversals in $d$-iterated groups.

\begin{teorema} \label{grouptransintro}
Let $G$ be a group of order $n$ and $G'$ be the commutator subgroup of $G$. Then the following hold.
\begin{itemize}
\item  If $G$ satisfies the Hall--Paige condition, then for all $d$, the $d$-iterated group $G[d]$ has  a transversal.
\item If $G$ does not satisfy the Hall--Paige condition, then  $G[d]$ has a transversal if $d$ is even and does not have a transversal if $d$ is odd.
\end{itemize}
If the number $T(d)$ of transversals in $G[d]$ is nonzero, then 
 $$T (d) = \frac{n! }{|G'|n^{n-1}}   \cdot n!^d (1 + o(1)) $$
 as $d \rightarrow \infty$.
\end{teorema}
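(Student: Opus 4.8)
The plan is to translate the counting of transversals into a convolution problem on the finite group $H=\operatorname{Map}(G,G)\cong G^{n}$ of functions $G\to G$ under pointwise multiplication, and then to diagonalise by Peter--Weyl. First I would record the standard parametrisation: a transversal of the latin hypercube of $G[d]$ is a set of $n$ cells meeting each hyperplane once and carrying all $n$ symbols, and labelling its cells gives a tuple $(\sigma_1,\dots,\sigma_{d+1})\in\operatorname{Sym}(G)^{d+1}$ whose pointwise product $t\mapsto\sigma_1(t)\cdots\sigma_{d+1}(t)$ is a bijection, each transversal arising from exactly $n!$ labellings. Writing $B=\operatorname{Sym}(G)\subset H$ (so $|B|=n!$) and letting $\ast$ be convolution on $H$, this gives $T(d)=N(d+1)/n!$ with $N(m)=\#\{(\sigma_i)\in B^{m}:\prod_i\sigma_i\in B\}=\langle \mathbf 1_B^{\ast m},\mathbf 1_B\rangle$. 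Thus $T(d)$ is, up to the factor $1/n!$, a self-correlation of $\mathbf 1_B$, i.e.\ essentially $(A^{d})(\mathbf 1,\mathbf 1)$ for the transfer operator $A$ of right multiplication by a random element of $B$; since every $\sigma$ maps somewhere, the row sums of $A$ equal $n!$, so $n!$ is visible at once as an eigenvalue.

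The asymptotics come from the Plancherel expansion
$$N(m)=\frac{1}{n^{n}}\sum_{\rho}d_\rho\,\operatorname{tr}\!\bigl(M_\rho^{\,m}M_\rho^{\ast}\bigr),\qquad M_\rho:=\widehat{\mathbf 1_B}(\rho)=\sum_{\sigma\in\operatorname{Sym}(G)}\ \bigotimes_{t\in G}\rho_t(\sigma(t)),$$
the outer sum running over the irreducibles $\rho=\boxtimes_{t}\rho_t$ of $H$. Each $M_\rho/n!$ is an average of unitaries, so its spectral radius is at most $n!$. The decisive point is that $\|M_\rho\|=n!$ holds exactly for the $|G^{\mathrm{ab}}|=n/|G'|$ one-dimensional representations $\chi^{\boxtimes n}$ with $\chi$ a character of $G^{\mathrm{ab}}=G/G'$: for these $M_\rho=n!\,\chi(s)$ is the scalar of modulus $n!$ determined by $s:=\prod_{g\in G}g\in G^{\mathrm{ab}}$, while all other $\rho$ are strictly contracting. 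Feeding the principal terms into the expansion and summing over $\chi$ by orthogonality yields $\sum_\chi\chi(s)^{m-1}=|G^{\mathrm{ab}}|\,[\,s^{\,m-1}=e\,]$, so the leading term of $N(d+1)$ is $\tfrac{(n!)^{d+2}}{n^{n}}\cdot\tfrac{n}{|G'|}\,[\,s^{d}=e\,]$ and hence $T(d)=\tfrac{n!}{|G'|n^{n-1}}\,(n!)^{d}(1+o(1))$ whenever $s^{d}=e$.

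It remains to identify $s^{d}=e$ with the stated existence dichotomy and to supply existence for every admissible $d$. Abelianising the value map of a transversal gives $s=s^{\,d+1}$ in $G^{\mathrm{ab}}$, i.e.\ the obstruction $s^{d}=e$. By the classical fact that the product of all elements of a finite abelian group is trivial unless the group has a unique involution, $s=e$ precisely when the Sylow $2$-subgroups of $G$ are trivial or non-cyclic (the Hall--Paige condition), and otherwise $s$ is the unique involution of $G^{\mathrm{ab}}$; hence $s^{d}=e$ for all $d$ in the Hall--Paige case and for exactly the even $d$ otherwise, matching the claim. Positivity of $T(d)$ for \emph{every} admissible $d$, not just large $d$, then follows by composition: Theorem~\ref{HPconj} gives $T(1)>0$ in the Hall--Paige case and the known transversal of every even iterate gives $T(2)>0$ otherwise, and concatenating such transversals (powers of $A$ based at the state $\mathbf 1$) produces transversals for all larger admissible $d$.

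The main obstacle is the strict spectral gap: proving $\|M_\rho\|<n!$ for every $\rho$ other than the diagonal characters $\chi^{\boxtimes n}$, with enough uniformity to yield the $o(1)$. Since $M_\rho/n!$ averages the unitaries $\bigotimes_t\rho_t(\sigma(t))$, equality would force these to share an eigenvector with a common eigenvalue; using the transitivity of $\operatorname{Sym}(G)$ on the coordinates one argues that this forces every $\rho_t$ to be one and the same one-dimensional character, so any other $\rho$ is strictly contracting. Turning this qualitative statement into a quantitative second-eigenvalue bound---for instance by evaluating $\operatorname{tr}(M_\rho M_\rho^{\ast})$ via Schur orthogonality and bounding it away from $d_\rho\,(n!)^{2}$---is the technical heart and the step I expect to be hardest; it is precisely the refinement of the method of~\cite{my.iter} required to pass from $c\,n!^{d}(1+o(1))$ to the explicit constant $\tfrac{n!}{|G'|n^{n-1}}$.
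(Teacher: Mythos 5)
Your route is genuinely different from the paper's, and for groups its outline is sound. The paper works with the transition matrix $T$ (the $(U,V)$ entry counts $U$-diagonals of type $V$), applies Perron--Frobenius theory to the doubly stochastic matrix $n!^{-1}T$ (Theorems~\ref{PFform} and~\ref{PerFrob}), and identifies the irreducible blocks and their periods for groups by combinatorial constructions (Proposition~\ref{toconstgr}, Lemmas~\ref{subgroupU1}--\ref{classdescr}) plus the D\'enes--Hermann theorem (Theorem~\ref{DHth}). Your transfer operator is literally the same object: $T$ is right convolution by $\mathbf{1}_B$ on $\mathbb{C}[G^n]$. So your proposal amounts to replacing Perron--Frobenius by Peter--Weyl diagonalisation, in the style of the cited work of Eberhard, Manners, and Mrazovi\'c, but in the much easier regime where $n$ is fixed and $d\to\infty$. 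This buys a transparent derivation of the constant: the peripheral spectrum is carried by the $|G^{\mathrm{ab}}|=n/|G'|$ characters $\chi^{\boxtimes n}$, and orthogonality gives both the constant and the parity dichotomy through $s$. What it loses is generality: Fourier analysis needs the group structure on $G^n$, whereas the paper's Perron--Frobenius framework covers arbitrary quasigroups (Theorem~\ref{quasitransintro}), which is the point of the paper.

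Two specific criticisms. The first is a genuine flaw in a stated justification. You deduce ``$s=e$ iff Hall--Paige'' from the classical fact about the product of all elements of a finite abelian group, applied to $G^{\mathrm{ab}}$. But $s$ is the image of $\prod_{g\in G}g$, not the product of the elements of $G^{\mathrm{ab}}$: each coset occurs $|G'|$ times, so $s=t^{|G'|}$ where $t$ is the latter product. For $G=S_4$ your argument gives the wrong answer: $G^{\mathrm{ab}}\cong\mathbb{Z}_2$ has a unique involution, so $t\neq e$, yet $s=t^{12}=e$, and indeed $S_4$ is a Hall--Paige group whose iterates all have transversals. The equivalence you want is true, but it is precisely the content of the D\'enes--Hermann theorem together with the equivalence $(1)\Leftrightarrow(3)$ from the paper's table; it must be cited or reproved (e.g.\ via Burnside's normal $2$-complement theorem) and does not follow from the abelian fact alone.

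The second is that you misjudge where the difficulty lies. Because $G$, hence $H=G^n$, is fixed and only $d\to\infty$, there are finitely many $\rho$; qualitative strictness $\|M_\rho\|<n!$ already yields a uniform $\lambda<n!$ and an exponentially small error term, so no quantitative second-eigenvalue estimate is needed. The qualitative gap itself is elementary: equality in the triangle inequality forces a unit vector fixed by $\rho(\sigma^{-1}\tau)$ for all $\sigma,\tau\in B$, i.e.\ the restriction of $\rho$ to $K=\langle B^{-1}B\rangle$ contains the trivial representation. One then checks $K=\ker\phi$, where $\phi:G^n\to G^{\mathrm{ab}}$ is the coordinatewise product map: choosing a bijection $\sigma'$ with $\sigma'(j)=\sigma'(i)g$ and letting $\sigma$ be $\sigma'$ with its values at $i$ and $j$ swapped shows that the tuple equal to $g$ at position $i$, $g^{-1}$ at position $j$, and $e$ elsewhere lies in $B^{-1}B$; products of three such tuples give every single-coordinate commutator; and these elements generate $\ker\phi$. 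Since $\ker\phi$ is normal, irreducibility forces $\rho$ to be trivial on it, whence $\rho=\chi^{\boxtimes n}$. This finite lemma is the exact analogue of the paper's Proposition~\ref{toconstgr} and Lemma~\ref{U1descr}. With it, with the D\'enes--Hermann citation, and with a written-out relabelling argument for concatenating transversals (your parenthetical ``powers of $A$'' is not yet a proof, since a pointwise product of two bijections need not be a bijection, but the relabelling $\tau_i\mapsto\tau_i\circ b_1$ fixes it), your proof closes.
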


It is interesting to note that Theorems~\ref{grnasym} and~\ref{grouptransintro} give similar asymptotics for the number of transversals. It seems probable that  Theorem~\ref{grnasym} admits an extension to the case of arbitrary $d$-iterated groups of large order (see~\cite{eberhard.moreaddtrip}).

For a general quasigroup we prove the following result.

\begin{teorema} \label{quasitransintro}
Let $G$ be a binary quasigroup of order $n$.  There is $d_0 \in \mathbb{N}$ such that for all $d \geq d_0$ one of the following possibilties for transversals in iterated quasigroups $G[d]$ occurs:
\begin{itemize}
\item every iterated quasigroup $G[d]$ has a transversal;
\item $G[d]$ has no transversals when $d$ is odd and contains transversals when $d$ is even.
\end{itemize}
There is an integer $r =r(G)$, $1 \leq r \leq n$, such that if the number $T(d)$ of transversals in $G[d]$ is nonzero, then 
 $$T (d) = \frac{n! }{rn^{n-1}}   \cdot n!^d(1 + o(1)) $$
 as $d \rightarrow \infty$. Moreover, if $G$ is a loop, then  $r \leq |G'|$, where $G'$ is the commutator subloop of $G$.
\end{teorema}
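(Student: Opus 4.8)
The plan is to recast the number of transversals as an entry of a power of a single transfer matrix and then to extract the asymptotics from its Perron--Frobenius structure. After identifying the index set with $[n]=\{1,\dots,n\}$, a transversal of $G[d]$ corresponds, up to the $n!$ relabelings of the index set, to a tuple $(\sigma_1,\dots,\sigma_{d+1})$ of permutations for which $x\mapsto(\cdots(\sigma_1(x)*\sigma_2(x))*\cdots)*\sigma_{d+1}(x)$ is a bijection; here every coordinate of a transversal is forced to be a permutation. Setting $h_k(x)=(\cdots(\sigma_1(x)*\sigma_2(x))*\cdots)*\sigma_k(x)$, the update $h_{k+1}=h_k*\sigma_{k+1}$ (taken pointwise) is encoded by the $0/1$ matrix $M$ indexed by all functions $[n]\to[n]$, where $M_{g,h}=1$ exactly when $x\mapsto h(x)\backslash g(x)$ is a permutation. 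For fixed $h$ each $\sigma$ yields a distinct $g=h*\sigma$, and for fixed $g$ each $\sigma$ yields a distinct $h$ with $h*\sigma=g$; hence every row and every column of $M$ sums to $n!$, so $P:=M/n!$ is doubly stochastic. Writing $\mathbf{1}$ for the indicator of the $n!$ bijective functions, one obtains $T(d)=\tfrac1{n!}\,\mathbf{1}^{\top}M^{d}\mathbf{1}=n!^{\,d-1}\,\mathbf{1}^{\top}P^{d}\mathbf{1}$.

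I would next analyze $P$ as the transition matrix of a finite Markov chain. Being doubly stochastic, $P$ has no transient states, the uniform distribution is stationary on each recurrent communicating class, and $P^{pm}_{g,h}\to 1/|C|$ as $m\to\infty$ whenever $g,h$ lie in a common cyclic subclass $C$ of period $p$. The first task is to prove that all bijections belong to a single cyclic subclass $R$ — using that pointwise multiplication by permutations connects any two bijections after a number of steps of the correct residue — and to determine the period $p$. Granting this, $\mathbf{1}^{\top}P^{d}\mathbf{1}\to (n!)^2/|R|$ along the admissible residue of $d$, whence $T(d)=\tfrac{n!}{|R|}\,n!^{\,d}(1+o(1))$, and we set $r:=|R|/n^{\,n-1}$. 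The even/odd dichotomy is precisely the statement that $p\in\{1,2\}$: I would construct an invariant $I$ on functions that is shifted by a fixed element $s$ at each step, so that a bijection is reachable after $d$ steps only when the accumulated shift is compatible with the bijection value; $p=1$ (transversals for all large $d$) and $p=2$ (transversals only for even $d$) then correspond to $\mathrm{ord}(s)=1$ and $\mathrm{ord}(s)=2$, the bound $\mathrm{ord}(s)\le 2$ reflecting that the relevant ``product of all values'' must be an involution.

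To pin down $r$ I would realize $R$ as a level set of such an invariant. The bound $r\le n$ is immediate from $|R|\le n^{\,n}$. For $r\ge1$ and the integrality of $r$, the aim is to produce a finite abelian group $A$ with $|A|\mid n$ and an invariant $I\colon[n]^{[n]}\to A$ whose fibers all have equal size $n^{\,n}/|A|$ and which is constant on $R$, so that $R$ is one entire fiber and $r=n/|A|$ is a positive integer in $[1,n]$. When $G$ is a loop, the abelianization homomorphism $\pi\colon G\to G/G'$ furnishes a (possibly coarser) such invariant via $I_0(h)=\prod_{x}\pi(h(x))$: this is well defined because $G/G'$ is an abelian group, it satisfies $I_0(h*\sigma)=I_0(h)\cdot\prod_{y\in G}\pi(y)$, and every bijection takes the common value $\prod_{y\in G}\pi(y)$. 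Hence $R$ lies inside a single fiber of $I_0$, of size $n^{\,n}/|G/G'|=|G'|\,n^{\,n-1}$, which gives $|R|\le|G'|\,n^{\,n-1}$ and therefore $r\le|G'|$. Equality holds exactly when the chain is transitive on that fiber, as happens for groups, recovering Theorem~\ref{grouptransintro}; for abelian $G$ one has $|G/G'|=n$, forcing $r=1$.

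The main obstacle is the Perron--Frobenius bookkeeping for an arbitrary quasigroup, where no homomorphism onto an abelian group is available a priori. I must show that the bijections occupy a single cyclic subclass, that its period is exactly $1$ or $2$, and — most delicately — construct the canonical invariant group $A$ with $|A|\mid n$ (equivalently, prove that $R$ is a coset of a subgroup of index dividing $n$ in the space of functions), so that $r=|R|/n^{\,n-1}$ is forced to be an integer and the per-step shift is an involution. Proving that $R$ is a full fiber rather than a proper subclass, and relating this canonical $A$ to the algebraic structure of $G$ in the loop case, is where the bulk of the work lies, and it is exactly what separates the sharp quasigroup statement from the clean group formula.
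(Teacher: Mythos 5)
Your skeleton coincides with the paper's: the $0/1$ transfer matrix indexed by all functions $\mathcal{I}_n\to\mathcal{I}_n$ (the paper's transition matrix $T$), its double stochasticity after division by $n!$, the multiplicativity $T(G[d])=T^d$ (Lemma~\ref{diagiter}), and the Perron--Frobenius/Markov decomposition into recurrent classes and cyclic subclasses with convergence to block-uniform limits (Theorems~\ref{PFform} and~\ref{PerFrob}, packaged as Theorem~\ref{quasigeneral}). Your loop argument is also sound and close in spirit to the paper's: the abelianization invariant $I_0(h)=\prod_x\pi(h(x))$ plays the role of the paper's Proposition~\ref{loopimpr}, which uses the fact that $P^k(G)$ lies in a single coset of the commutator subloop $G'$, and both give $|R|\le |G'|n^{n-1}$, hence $r\le|G'|$.

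The genuine gap lies in the three facts you explicitly defer: that all bijections occupy a single cyclic subclass, that its period is at most $2$, and --- most importantly --- that $|R|=rn^{n-1}$ for an \emph{integer} $r$. Your proposed route to the last two (a canonical finite abelian group $A$ with $|A|\mid n$, an invariant shifted by a fixed element $s$ of order at most $2$ at each step, with $R$ a full equal-size fiber) presupposes algebraic structure that a general quasigroup does not have: there is no homomorphism-like product of values, the function space carries no group structure in which $R$ could be a coset (the paper even poses as open whether $\mathcal{U}_1$ is a subquasigroup of $G^n$), and the paper's Examples 2 and 3 together with its remark that classes and units ``cannot be expressed in terms of some algebraic invariant and mostly depend on the structure of the Cayley table'' indicate that such an $A$ generally does not exist. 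The paper instead proves integrality with no algebra at all, by an equidistribution-over-lines argument: using reachability of near-constant tuples $\textbf{a}_i(b)$ (Proposition~\ref{toconst}), it shows that every unit meets each axis-parallel line $V_j^*=\{V_j(1),\dots,V_j(n)\}$ in the same number $r_i$ of points (Propositions~\ref{canoninunits} and~\ref{Vinunits}); the key trick is that a single fixed collection of permutations transports a whole line bijectively onto another line. Since the $n^{n-1}$ lines in a fixed direction partition $\mathcal{I}_n^n$, each unit has size $r_in^{n-1}$ (Corollary~\ref{unitsizes}). Likewise the period bound needs no shift element: from any permutation $W$ one reaches any constant tuple $\textbf{a}$ in one step (the entrywise equation $W*W'=\textbf{a}$ has a permutation solution $W'$ because right division by a fixed element is a bijection) and returns in one step, giving a closed walk of length $2$, so $\tau_1\le 2$ (Lemma~\ref{transclass}); closure of units under coordinate permutations (Proposition~\ref{permclosed}) then puts all of $\mathcal{W}$ in a single unit and yields the even/odd dichotomy. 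Unless you replace your invariant-based plan with arguments of this combinatorial kind, the proposal does not close for general quasigroups.
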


Theorems~\ref{grouptransintro} and~\ref{quasitransintro} imply that iterated abelian groups have the asymptotically maximal number of transversals among all iterated quasigroups, that answers one of the questions from~\cite{my.iter}.

The method developed in this paper allows us to count not only transversals but other types of diagonals and structures in iterated quasigroups.
For instance, we prove the following result for near transversals being in line with Conjecture~\ref{multibrualdi}.

\begin{teorema} \label{nearintro}
Let $G$ be a binary quasigroup of order $n$.
\begin{enumerate}
\item There is $d_0 \in \mathbb{N}$ such that for all $d \geq d_0$ the $d$-iterated quasigroup $G$ has a near transversal. Moreover, if Brualdi's conjecture is true, then all $d$-iterated quasigroups $G[d]$ have a near transversal.
\item There is an integer $r =r(G)$, $1 \leq r \leq n$, such that the number $N(d)$ of near  transversals  in $G[d]$ is 
 $$N (d)  = c(G,d)  \frac{n!}{ rn^{n-1}} \cdot n!^{d}  (1 + o(1)) $$
 as $d \rightarrow \infty$, where $c(G,d) = \frac{n}{2} (r-1) +1$ when $G[d]$ has a transversal, and  $c(G,d) = \frac{n}{2} r$ otherwise. If $G$ is a group, then $r = |G'|$.
\end{enumerate}
\end{teorema}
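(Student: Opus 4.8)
The plan is to reduce the counting of near transversals to the counting of transversals via a "defect" version of the same transfer‑matrix / eigenvalue machinery that yields Theorems~\ref{grouptransintro} and~\ref{quasitransintro}. A near transversal is a diagonal whose $n$ symbols take exactly $n-1$ distinct values, i.e.\ one value is repeated twice and one value is missing. I would encode near transversals as diagonals of ``type'' $(2,0)$: choose the repeated symbol $a$ and the missing symbol $b$, and count diagonals in $G[d]$ that hit $a$ exactly twice, $b$ zero times, and every other symbol exactly once. The key observation, parallel to the transversal case, is that the number of such diagonals of a fixed content profile is governed by the spectrum of a fixed linear operator on functions over $G$ (the same operator whose top eigenvalue produces the leading $n!^{d}$ growth), so that each admissible profile contributes $\frac{n!}{rn^{n-1}} n!^{d}(1+o(1))$ with the \emph{same} dominant constant $\frac{n!}{rn^{n-1}}$ and the same $r=r(G)$ that controls transversals.

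The main computation is then the combinatorial bookkeeping of \emph{how many} profiles $(a,b)$ actually contribute to leading order, which is exactly what the factor $c(G,d)$ records. First I would set up the diagonal–counting operator for iterated quasigroups from the earlier sections, identify its Perron eigenvalue $n!$ and the associated projection whose normalization gives $\tfrac{1}{rn^{n-1}}$, and show that the leading asymptotics of the number of diagonals with a prescribed multiset of symbols depends only on the multiset's ``shape'' up to the symmetry group acting on $G$ that preserves the operator. For transversals the shape is the all‑ones multiset and there is essentially one contributing class (hence the clean constant). For near transversals the shape is ``one symbol doubled, one deleted,'' and I would count the number of ordered pairs $(a,b)$ with $a\neq b$ that yield a diagonal summable/realizable in $G[d]$: generically there are $n(n-1)$ ordered choices, but the repetition forces a parity/summation constraint (the product of the diagonal entries must equal a fixed group element, as in the Hall--Paige obstruction), which halves the count and produces the factor $\tfrac{n}{2}$.

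The delicate point, and where the two regimes of $c(G,d)$ come from, is the interaction between the near‑transversal profiles and whether $G[d]$ itself has a transversal. When $G[d]$ has no transversal, every realizable profile is a genuine $(2,0)$‑defect profile, and counting the admissible $(a,b)$ pairs gives $c(G,d)=\tfrac{n}{2}r$. When $G[d]$ \emph{does} have a transversal, some near transversals arise degenerately — one can take a transversal and ``perturb'' it, and additionally the fully‑balanced profile contributes a $+1$ correction term — yielding $c(G,d)=\tfrac{n}{2}(r-1)+1$. I expect the hard part to be proving that these are the \emph{only} contributing profiles to leading order, i.e.\ that all other defect shapes (two distinct symbols each doubled, a symbol tripled, etc.) correspond to strictly smaller eigenvalues of the associated operator and hence are $o(n!^{d})$; establishing the spectral gap for every intermediate profile, uniformly, is the technical crux. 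For the group case I would finally invoke the exact determination $r=|G'|$ from Theorem~\ref{grouptransintro}, where the structure of the commutator subgroup pins down both the Perron data and the summation constraint, so that the constants specialize cleanly.

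The existence statement in part~(1) then follows from part~(2): since $c(G,d)\geq 1$ and $r\geq1$, the asymptotic count $N(d)$ is strictly positive for all large $d$, giving a near transversal for $d\geq d_0$; and the conditional ``if Brualdi's conjecture is true'' clause handles the finitely many small $d$ by reducing a near transversal in $G[d]$ to a near transversal in some underlying binary structure, where Brualdi's conjecture applies directly.
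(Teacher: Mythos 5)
Your high-level skeleton (the transition-matrix/Perron--Frobenius machinery, the same leading constant per admissible type, and a case split according to whether $G[d]$ has a transversal) is indeed the paper's skeleton, but the step that actually produces $c(G,d)$ is missing, and what you offer in its place fails for a general quasigroup. You count ``admissible profiles'' $(a,b)$ (doubled symbol, missing symbol) and argue that a product/parity constraint ``as in the Hall--Paige obstruction'' halves $n(n-1)$; this is group-style reasoning --- a general quasigroup has no product homomorphism imposing such a constraint --- and the numbers do not come out right in any case: halving $n(n-1)$ gives $\binom{n}{2}$, which equals $\frac{n}{2}(r-1)$ only when $r=n$, and your claim that each admissible profile contributes $\frac{n!}{rn^{n-1}}n!^{d}$ overcounts by a factor of $2$, since each individual type tuple contributes $\frac{n!^{d}}{rn^{n-1}}(1+o(1))$ and a defect profile has only $n!/2$ arrangements. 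The paper instead derives $c(G,d)$ from two structural facts about units for which your proposal has no substitute: Proposition~\ref{Vinunits} (every unit of $\mathcal{U}_1$ contains exactly $r$ tuples of each set $W_i^*$, for every permutation $W$ and position $i$) and Proposition~\ref{permclosed} (units are closed under permutation of coordinates, so a unit meeting $\mathcal{W}$ contains all of $\mathcal{W}$). Given these, a double count --- each non-permutation tuple $W_i(b)$ arises in exactly two ways as a one-position modification of a permutation --- shows that a unit containing permutations holds exactly $\bigl(\frac{n}{2}(r-1)+1\bigr)n!$ near-transversal types (the $-1$ because one of the $r$ tuples of $W_i^*$ lying in that unit is $W$ itself), while a unit without permutations holds $\frac{n}{2}r\,n!$ of them; multiplying by $\frac{n!^{d}}{rn^{n-1}}(1+o(1))$ per type gives the theorem. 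The correct number of admissible profiles is $n(r-1)$ or $nr$, not $\binom{n}{2}$, and the factor $\frac{1}{2}$ comes from the $n!/2$ arrangements per profile, not from a parity constraint on profiles.

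Two further gaps. First, your announced ``technical crux'' --- a spectral-gap argument showing that shapes with two doubled symbols, a tripled symbol, etc.\ contribute $o(n!^{d})$ --- is a non-issue: such tuples are not near transversals by definition, so they never enter the sum defining $N(d)$ and nothing needs to be excluded; the genuine difficulty is the exact per-unit type count above, which your plan does not address. Second, your handling of the Brualdi clause is inverted: the claim is that a near transversal of the \emph{binary} quasigroup $G$ (supplied by Brualdi's conjecture) propagates \emph{upward} to every $G[d]$, which the paper does by composing it, via Lemma~\ref{diagiter}, with transversals of even-iterated quasigroups (available for every quasigroup by the argument in Lemma~\ref{transclass}) after a coordinate-permutation adjustment; ``reducing a near transversal in $G[d]$ to a near transversal in some underlying binary structure'' is not an argument, as no such downward reduction is available in general.
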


Recall that we treat a near transversal in a latin hypercube of order $n$ as a set of $n$ entries  filled by $n$ or $n-1$ different symbols.

\section{Main definitions and preliminaries} \label{defsec}

In what follows, $\mathcal{I}_n$ stands for the set $\{ 1, \ldots, n\}$ and $\mathcal{I}_n^k$ is used for the set of all $k$-tuples with entries from $\mathcal{I}_n$.   

An $n$-tuple $W \in \mathcal{I}_n^n$, $W = (w_1, \ldots, w_n)$, with different entries $w_i$ in all positions is said to be a \textit{permutation}.  Let $\mathcal{W}$ denote the set of all permutations and $\mathbb{W}$ be the identity permutation $(1, \ldots, n)$. Let us denote by $\textbf{a}$ the $n$-tuple from $\mathcal{I}_n^n$ all of whose entries are equal to $a$. Given a tuple $V \in \mathcal{I}_n^n$, we use $V_i(b)$ for a tuple from $\mathcal{I}_n^n$ that coincides with $V$ in all positions except, possibly, the $i$-th position, in which  $V_i(b)$ equals $b$. For a permutation $\pi \in S_n$ and a tuple $V \in \mathcal{I}_n^n$, $V = (v_1, \ldots,v_n )$, let $\pi(V)  = (\pi(v_1), \ldots,\pi(v_n) )$ and  $V^\pi = (v_{\pi(1)}, \ldots, v_{\pi(n)})$.

A \textit{binary quasigroup} $G$ of order $n$ is defined by a binary operation $*$ over a set $\mathcal{I}_n$ satisfying the following condition: for each $a_0, a_1, a_2 \in \mathcal{I}_n$ there exist unique $x_1, x_2 \in \mathcal{I}_n$ such that both $a_0 = a_1 * x_2$ and $a_0 = x_1 * a_2$ hold.  A \textit{$d$-ary quasigroup} $f$ of order $n$ is a function $f: \mathcal{I}_n^d \rightarrow \mathcal{I}_n$ such that the equation $x_0 = f(x_1, \ldots, x_n)$ has a unique solution for any one variable if all the other $n$ variables are specified arbitrarily. 
 
A \textit{composition}  of a $d$-ary quasigroup $f$ and a $k$-ary quasigroup $g$ of orders $n$  is the $(d+k-1)$-ary quasigroup  $h$ of order $n$ such that for some permutation $\sigma \in S_{d+k}$ it holds
$$h(x_1, \ldots, x_{d+k-1}) = x_{d+k} \Leftrightarrow g(x_{\sigma(1)}, \ldots, x_{\sigma(k)}) = f(x_{\sigma(k+1)},   \ldots, x_{\sigma(d+k)} ) . $$
 
In this paper we mostly will work with the composition of quasigroups $f$ and $g$ defined as %  We define a \textit{fore composition} $f \circ g$ of a $d$-ary quasigroup $f$ and a $k$-ary quasigroup $g$ of orders $n$ to be the $(d+k-1)$-ary quasigroup  $h$ such that
$$h(x_1, \ldots, x_{d+k-1}) = x_{d+k} \Leftrightarrow f(g(x_1, \ldots, x_k), x_{k+1}, \ldots, x_{d+k-1} ) = x_{d+k}. $$

Given a binary quasigroup $G$ of order $n$ with the quasigroup operation $*$, the \textit{$d$-iterated quasigroup $G[d]$} is the $(d+1)$-ary quasigroup of order $n$ obtained as the composition of $d$ copies of  the quasigroup $G$ with itself:
$$G[d] (x_1, \ldots, x_{d+1}) = x_{d+2} \Leftrightarrow (\ldots((x_1 * x_2) * x_3)* \ldots * x_{d})*x_{d+1} = x_{d+2}.$$

In particular, the $0$-iterated quasigroup $G[0]$ is the identity $1$-ary mapping ($G[0](x) = x$ for all $x$), and the $1$-iterated quasigroup $G[1]$ coincides with  the binary quasigroup $G$.
 
A \textit{$d$-dimensional latin hypercube $Q$ of order $n$} is the Cayley table of a $d$-ary quasigroup of the same order.  Equivalently, a $d$-dimensional latin hypercube of order $n$ is an array indexed by elements from $\mathcal{I}_n^d$, whose entries take values from the set  $\mathcal{I}_n$ so that in each line ($1$-dimensional plane) of the array all $n$ symbols occur.  In what follows, we identify a $d$-ary quasigroup and the corresponding $d$-dimensional latin hypercube.

Every $d$-ary quasigroup $f = f(x_1, \ldots, x_d)$ of order $n$  can be considered as an imaging of the first coordinate $x_1$ by the action of all other coordinates $x_2, \ldots, x_d$. Similarly, for any $k$-tuple $U$ from $\mathcal{I}_n^k$ we can find an image $V$ of  the tuple $U$ in  the $d$-ary quasigroup $f$ by the action of $k$-tuples $W_{1}, \ldots, W_{d-1} \in \mathcal{I}_n^k$ from  the relation $f(U, W_1, \ldots, W_{d-1}) = V$ satisfied entrywise. Let us use this approach to define a collection of elements in a quasigroup which are close to being diagonals.

Given a  $d$-ary quasigroup $f$, define an \textit{$U$-diagonal of type $V$} to be a collection of permutations $(W_1, \ldots, W_{d-1})$, $W_i \in \mathcal{W}$, for which the equality $$f(U, W_1, \ldots, W_{d-1}) = V$$ holds entrywise.  We define a \textit{transversal} in a quasigroup $f$ to be an arbitrary $\mathbb{W}$-diagonal of type $W$, where $W$ is a  permutation. 

In the definition of $U$-diagonals of type $V$, we require that the middle tuples $W_1, \ldots, W_{d-1}$ are permutations only because in the framework of this study we are interested in transversals and diagonals.   For other structures in latin hypercubes, one can take the tuples  $W_1, \ldots, W_{d-1}$ from any other appropriate class.

Given a $d$-ary quasigroup $f$ of order $n$, define the \textit{transition matrix} $T$ to be the matrix of order $n^n$ with entries $t_{U,V}$, $U, V \in \mathcal{I}_n^n$, equal to the number of $U$-diagonals of type $V$ in the quasigroup $f$. Note that for every $d$-ary quasigroup of order $n$, the transition matrix $T$ is an integer nonnegative matrix with row and column sums equal to $n!^{d-1}$.  

For an illustration of the introduced concepts, consider the following simple example.

\textbf{Example 1.} Let $G$ be a binary quasigroup (and group) of order $2$ with the Cayley table
$$\begin{array}{c|cc}
*  & 1 & 2 \\
\hline
1 & 1 & 2 \\ 2 & 2 & 1
\end{array}$$
The set $\mathcal{I}_2^2$ consists of four tuples:
$$ (1,1), ~~~ (1,2), ~~~ (2,1), ~~~ (2,2),$$
with two of them being permutations: $\textbf{(1,2)}$, $\textit{(2,1)}$.  The transition matrix $T$ of the quasigroup $G$ is the following matrix of order $4$, in which a diagonal given by the first permutation is highlighted in bold text and the other is highlighted in italics:
$$\begin{array}{r|cccc}
~ & (1,1) & (1,2) & (2,1) & (2,2) \\
\hline
(1,1) & 0 & \textbf{1} & \textit{1} & 0 \\ 
(1,2) & \textbf{1}  & 0 & 0 & \textit{1} \\
(2,1) & \textit{1} & 0 & 0 & \textbf{1} \\ 
(2,2) & 0 & \textit{1}  & \textbf{1}  & 0
\end{array}$$

Let us establish several properties of transition matrices. Firstly, we consider the transition matrices of isotopic binary quasigroups.

Binary quasigroups $(G, *)$ and $(H, \cdot)$ are called \textit{isotopic} if there are bijections $\alpha, \beta, \gamma: G \rightarrow H$ such that $\alpha(x) \cdot \beta(y) = \gamma(x * y)$ for all $x,y \in G$. Appealing to the terms of latin squares, we will say that $\alpha$ is a \textit{row isotopy}, $\beta$ is a \textit{column isotopy}, and $\gamma$ is a \textit{symbol isotopy}.

\begin{lemma} \label{tranitiso}
Let $(G, *)$ and $(H, \cdot)$  be binary quasigroups of order $n$ with transition matrices $T = (t_{U,V})$ and $R = (r_{U,V})$, respectively.
\begin{enumerate}
\item  If there is a row isotopy $\alpha$ between $G$ and $H$, then  $ t_{U,V} = r_{\alpha(U),V} $ for all $U,V \in \mathcal{I}_n^n$.
\item  If there is a column isotopy $\beta$ between $G$ and $H$, then $T = R$.
\item  If there is a symbol isotopy $\gamma$ between $G$ and $H$, then $ t_{U,V} =r_{U,\gamma(V)} $ for all $U,V \in \mathcal{I}_n^n$.
\end{enumerate}
\end{lemma}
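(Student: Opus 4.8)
The plan is to first unwind the definition of the transition matrix in the binary case so that each entry becomes a concrete count, and then to read each of the three isotopy relations as a substitution in the defining equation. Since $G$ and $H$ are binary (that is, $2$-ary) quasigroups, a $U$-diagonal of type $V$ is just a single permutation $W \in \mathcal{W}$, and the defining equation $f(U,W) = V$ reads entrywise as $u_i * w_i = v_i$ for all $i$. Thus $t_{U,V}$ is precisely the number of permutations $W$ with $u_i * w_i = v_i$ for every $i$, and $r_{U,V}$ is the number of permutations $W$ with $u_i \cdot w_i = v_i$ for every $i$. A key preliminary remark is that ``there is a row isotopy $\alpha$'' should be read as the existence of an isotopy of the special form $(\alpha, \mathrm{id}, \mathrm{id})$, i.e.\ $\alpha(x) \cdot y = x * y$, and analogously for the column and symbol cases; once this is fixed, each part is essentially a one-line substitution.

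For the row isotopy, I would substitute $x = u_i$, $y = w_i$ into $\alpha(x)\cdot y = x*y$ to get $\alpha(u_i) \cdot w_i = u_i * w_i$. Hence the condition $\alpha(u_i) \cdot w_i = v_i$ defining the count $r_{\alpha(U),V}$ is literally the same as the condition $u_i * w_i = v_i$ defining $t_{U,V}$, so the two sets of permutations $W$ coincide and $t_{U,V} = r_{\alpha(U),V}$. The symbol isotopy is equally direct: from $x \cdot y = \gamma(x*y)$ the equation $u_i \cdot w_i = \gamma(v_i)$ becomes $\gamma(u_i * w_i) = \gamma(v_i)$, which, since $\gamma$ is a bijection, is equivalent to $u_i * w_i = v_i$; therefore the permutations counted by $r_{U,\gamma(V)}$ are exactly those counted by $t_{U,V}$, giving $t_{U,V} = r_{U,\gamma(V)}$.

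The column isotopy is the one genuinely nontrivial step, because here the substitution acts on the variable $W$ that ranges over the permutations being counted, rather than on $U$ or $V$. From $x \cdot \beta(y) = x * y$ the equation $u_i \cdot w_i = v_i$ becomes $u_i * \beta^{-1}(w_i) = v_i$; writing $Y = \beta^{-1}(W)$ entrywise, this is the condition $u_i * y_i = v_i$. The crucial observation, and the main thing to check carefully, is that the entrywise map $W \mapsto \beta^{-1}(W)$ is a bijection of $\mathcal{W}$ onto itself: since $\beta$ is a bijection of $\mathcal{I}_n$, applying $\beta^{-1}$ coordinatewise sends a tuple with distinct entries to a tuple with distinct entries, and is invertible. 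Consequently it establishes a bijection between the permutations $W$ counted by $r_{U,V}$ and the permutations $Y$ counted by $t_{U,V}$, whence $T = R$. I expect this permutation-preservation argument to be the only place where care is required; the remaining two parts reduce to rewriting a single entrywise equation.
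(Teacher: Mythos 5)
Your proof is correct and takes essentially the same approach as the paper's: each isotopy is read as a substitution in the entrywise equation $U * W = V$, with parts 1 and 3 leaving the permutation $W$ untouched and part 2 handled by the coordinatewise bijection of $\mathcal{W}$ induced by $\beta$ (the paper maps $W \mapsto \beta(W)$ from $G$ to $H$ where you use $\beta^{-1}$ in the other direction, which is immaterial). Your explicit verification that applying $\beta^{-1}$ coordinatewise preserves the set of permutations is the one detail the paper leaves implicit.
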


\begin{proof}
Let a permutation $W$ define a $U$-diagonal of type $V$ in the quasigroup $G$, that is $U * W = V$.

1.  Since $ x * y = \alpha(x) \cdot y $,  the permutation $W$ gives an $\alpha(U)$-diagonal of type $V$ in the quasigroup $H$: $\alpha(U) \cdot W = V$.

2.  Since $ x * y = x \cdot \beta(y) $,  the permutation $\beta(W)$ gives the same $U$-diagonal of type $V$ in the quasigroup $H$: $U \cdot \beta(W) = V$.

3.  Since $ \gamma(x * y) = x \cdot y $,  the permutation $W$ gives a $U$-diagonal of type $\gamma (V)$ in the quasigroup $H$: $U \cdot W = \gamma(V)$.
\end{proof}

Next, we state the following key property of the transition matrix of a $d$-iterated quasigroup.

\begin{lemma} \label{diagiter} %\label{diagcomp}
Given a binary quasigroup $G$ of order $n$ with the transition matrix $T= (t_{U,V})$, the transition matrix of the $d$-iterated quasigroup $G[d]$ is $T^{d}$.  In particular, if there is a $U$-diagonal of type $Z$ in $G[d]$ and a $Z$-diagonal of type $V$ in $G[k]$ then there is a $U$-diagonal of type $V$ in $G[d+k]$.
%Let $T = (t_{U,V})$ and $R = (r_{U,V})$ be the transition matrices of quasigroups $f$ and $g$ of the same order. Then the transition matrix $S$ of the fore composition $f \circ g$ is the matrix $R T$.
\end{lemma}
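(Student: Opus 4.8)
The plan is to establish the slightly more general multiplicative identity $T^{(a+b)} = T^{(a)}T^{(b)}$ for all $a,b \geq 1$, where $T^{(m)}$ denotes the transition matrix of $G[m]$. Since $T^{(1)} = T$ by definition, a routine induction on $d$ then yields $T^{(d)} = T^d$, and the ``in particular'' claim will drop out of the $(U,V)$-entry of this product.

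First I would record the recursive structure of the iterated quasigroup that comes directly from the bracketing in its definition. For any $n$-tuples $U, W_1, \ldots, W_{a+b}$, applying the operation $*$ entrywise gives
$$ G[a+b](U, W_1, \ldots, W_{a+b}) = G[b]\big( G[a](U, W_1, \ldots, W_a),\, W_{a+1}, \ldots, W_{a+b} \big), $$
because $G[a](U, W_1, \ldots, W_a)$ is precisely the partial product $(\ldots((U * W_1) * W_2) * \ldots * W_a)$, and appending the remaining factors $W_{a+1}, \ldots, W_{a+b}$ on the right is exactly what $G[b]$ does to its first argument.

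Next I would count $U$-diagonals of type $V$ in $G[a+b]$ by conditioning on the intermediate tuple. Such a diagonal is a collection $(W_1, \ldots, W_{a+b}) \in \mathcal{W}^{a+b}$ with $G[a+b](U, W_1, \ldots, W_{a+b}) = V$. Setting $Z := G[a](U, W_1, \ldots, W_a) \in \mathcal{I}_n^n$, the displayed identity shows this holds if and only if $(W_1, \ldots, W_a)$ is a $U$-diagonal of type $Z$ in $G[a]$ and $(W_{a+1}, \ldots, W_{a+b})$ is a $Z$-diagonal of type $V$ in $G[b]$. The one point that must be handled carefully is that the intermediate value $Z$ need not be a permutation, so it ranges over all of $\mathcal{I}_n^n$; this is exactly why the transition matrix is indexed by arbitrary tuples rather than only by permutations. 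Since the two blocks of permutations are chosen independently and $Z$ is fully determined by the first block, grouping the count over $Z$ gives
$$ t^{(a+b)}_{U,V} = \sum_{Z \in \mathcal{I}_n^n} t^{(a)}_{U,Z}\, t^{(b)}_{Z,V} = \big(T^{(a)} T^{(b)}\big)_{U,V}, $$
which is the asserted identity.

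Finally, taking $b = 1$ and inducting on $d$ yields $T^{(d)} = T^{(d-1)} T = \cdots = T^d$. For the last assertion, if there is a $U$-diagonal of type $Z$ in $G[d]$ and a $Z$-diagonal of type $V$ in $G[k]$, then $t^{(d)}_{U,Z} \geq 1$ and $t^{(k)}_{Z,V} \geq 1$; since all entries are nonnegative integers, $t^{(d+k)}_{U,V} = \sum_{Z'} t^{(d)}_{U,Z'} t^{(k)}_{Z',V} \geq t^{(d)}_{U,Z}\, t^{(k)}_{Z,V} \geq 1$, so a $U$-diagonal of type $V$ exists in $G[d+k]$. I do not expect a genuine obstacle here: the argument is pure bookkeeping, and the only subtlety worth flagging is precisely the range of the intermediate tuple $Z$ over all of $\mathcal{I}_n^n$.
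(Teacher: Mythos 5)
Your proposal is correct and follows essentially the same argument as the paper: both decompose a diagonal of the iterated quasigroup by conditioning on the intermediate tuple $Z$, which crucially ranges over all of $\mathcal{I}_n^n$ rather than only over permutations, and both obtain the matrix power identity by induction. The only difference is cosmetic: you prove the general splitting $T^{(a+b)} = T^{(a)}T^{(b)}$ at an arbitrary index (which makes the ``in particular'' claim immediate), whereas the paper splits off just the last permutation $W_d$ in the induction step and leaves that corollary implicit.
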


\begin{proof}
The proof is by induction on $d$. For $d= 0$ and $d=1$ the statement of the lemma follows from definitions.

Assume that the $(d-1)$-iterated quasigroup $G[d-1]$ has the transition matrix $R = T^{d-1}$, $R = (r_{U,V})$.  To prove the induction step, we note that a collection of permutations $(W_1, \ldots, W_{d-1}, W_d)$ is a $U$-diagonal of type $V$ in $G[d]$  if and only if   for some tuple $Z \in \mathcal{I}_n^n$ the collection $(W_1, \ldots, W_{d-1})$ is a $U$-diagonal of type $Z$ in the quasigroup $G[d-1]$ and the permutation $W_d$ is a $Z$-diagonal of type $V$  in $G$. So the number of all $U$-diagonals of type $V$ in  $G[d]$ is equal to $\sum\limits_{Z \in \mathcal{I}_n^n } r_{U,Z} \cdot t_{Z,V}$.
\end{proof}

Summing up, we see that the problem of the asymptotics of the numbers of $U$-diagonals of type $V$ in iterated quasigroups $G[d]$ is equivalent to the question of the asymptotic behavior of the powers of the transition matrices $T$ of $G$.

\subsection{Perron--Frobenius theory, equivalence classes, and units} 

To study the behavior of powers of a transition matrix $T$, we use some results of  Perron--Frobenius theory.

A matrix $A = (a_{i,j})$ is said to be \textit{nonnegative} if all $a_{i,j} \geq 0$.
A nonnegative matrix is called \textit{doubly stochastic} if the sum of entries of $A$ in each row and column equals $1$. Let $J_n$ denote the doubly stochastic matrix of order $n$, whose entries all equal $1/n$.

A nonnegative matrix $A$ is called \textit{irreducible} if for each pair of indices $(i,j)$ there is $l \in \mathbb{N}$ such that the $(i,j)$-th entry of $A^l$ is positive. The \textit{period} of an irreducible matrix can be defined as the greatest common divisor of all $l$  for which the $(i,i)$-th entries of $A^l$ are positive for all $i$.

The following property  can be found in~\cite{PerMir.specdoubstoch} or it can be easily derived from definitions.

\begin{teorema}[\cite{PerMir.specdoubstoch}]  \label{PFform}
For every doubly stochastic matrix $A$  there is a permutation matrix $P$ such that 
$$PAP^{-1} = \left(  \begin{array}{ccc}  
B_1 & ~ & 0 \\
~ & \ddots & ~ \\
0 & ~ & B_k\
\end{array}\right),$$
where $B_i$ are irreducible doubly stochastic matrices. 
\end{teorema}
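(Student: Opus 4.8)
The plan is to first reduce the matrix to the Frobenius (block-triangular) normal form using the combinatorial structure of its support, and then to exploit double stochasticity to show that the off-diagonal blocks must actually vanish. First I would associate to the nonnegative matrix $A$ the directed graph on the index set $\{1, \ldots, n\}$ that has an arc from $i$ to $j$ precisely when $a_{i,j} > 0$. The strongly connected components of this graph partition the indices, and the condensation (contracting each component to a point) is a directed acyclic graph. Choosing a permutation matrix $P$ that lists the indices component by component in a topological order of the condensation, I obtain that $PAP^{-1}$ is block upper-triangular, with diagonal blocks $B_1, \ldots, B_k$ corresponding to the strongly connected components. By the definition of strong connectivity, each diagonal block $B_i$ is irreducible, so this already yields the desired diagonal blocks; the only thing left is to kill the blocks above the diagonal.

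The heart of the argument is a mass-counting step. After the reordering, the columns belonging to the first component $B_1$ contain nonzero entries only within $B_1$ itself (everything strictly below $B_1$ in those columns is zero by block-triangularity). Since $A$ is doubly stochastic, every such column sums to $1$, hence the column sums of $B_1$ are all equal to $1$, and the total mass inside $B_1$ equals $m_1$, where $m_1$ is the size of the first component. On the other hand, the $m_1$ rows indexed by the first component each sum to $1$, so the total mass in those rows is also exactly $m_1$. These rows contain $B_1$ together with the off-diagonal blocks to its right; since the mass already accounted for by $B_1$ equals the entire row mass $m_1$ and all entries are nonnegative, every off-diagonal block in the first block-row must be identically zero.

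Consequently $PAP^{-1}$ splits as a direct sum of $B_1$ and the remaining lower-right submatrix $A'$, and $A'$ is again doubly stochastic because deleting the now-decoupled block $B_1$ (whose row and column sums are all $1$) leaves all remaining row and column sums equal to $1$. Iterating the same peeling step on $A'$ removes $B_2$, then $B_3$, and so on, after at most $k$ steps producing the full block-diagonal form with irreducible doubly stochastic blocks $B_1, \ldots, B_k$. I would finally remark that each $B_i$ inherits double stochasticity from this procedure, so the stated conclusion holds verbatim.

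I expect the only delicate point to be the bookkeeping in the mass-counting step: one must be careful that block-triangularity places the zeros in exactly the right positions so that the first block-column sees no contribution from below and the first block-row's excess mass can only lie in the off-diagonal blocks. Once that structural placement is pinned down, the nonnegativity-plus-equality argument forces the off-diagonal blocks to vanish with no further computation, and the induction on $k$ is routine.
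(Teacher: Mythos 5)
Your proof is correct, but note that the paper itself contains no proof of this statement: it is quoted as a known result of Perfect and Mirsky~\cite{PerMir.specdoubstoch}, accompanied only by the remark that it ``can be easily derived from definitions.'' Your argument supplies exactly that missing derivation, and it is the standard one: pass to the Frobenius normal form via the strongly connected components of the support digraph, order the components topologically so that $PAP^{-1}$ is block upper-triangular with irreducible diagonal blocks, and then use double stochasticity to kill the blocks above the diagonal. The mass-counting step is sound: block-triangularity puts zeros below $B_1$ in its block-column, so the $m_1$ columns of $B_1$ carry total mass $m_1$, which already exhausts the total mass $m_1$ of the first $m_1$ rows; nonnegativity then forces the first block-row's off-diagonal blocks to vanish, and the peeling induction is routine since the residual matrix inherits row and column sums equal to $1$. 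One small point worth making explicit: a strongly connected component consisting of a single vertex without a loop would give a $1\times 1$ zero diagonal block, which is not irreducible under the paper's definition (no power has a positive entry); but your own decoupling argument shows that every diagonal block ends up doubly stochastic, so any $1\times 1$ block necessarily equals $(1)$ and the issue evaporates. In short, your route buys self-containedness at the cost of a page of bookkeeping, while the paper's citation buys brevity and defers to Perfect--Mirsky, whose paper also contains finer spectral information that this paper never needs.
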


From the Perron--Frobenius theory, we have the following result for  irreducible doubly stochastic matrices.

\begin{teorema} \label{PerFrob}
Let $A$ be an irreducible doubly stochastic matrix of order $n$ and period $\tau$.  After  appropriate simultaneous permutations of rows and columns, we have the following limits for powers of $A$:
$$\lim_{k \rightarrow \infty} A^{k  \tau} =
\left(  \begin{array}{ccc}  
J_{n/ \tau} &  ~ & 0 \\
~ & \ddots & ~ \\
0 & ~ & J_{n/ \tau} \\
\end{array}\right); 
$$
$$
\lim_{k \rightarrow \infty} A^{k  \tau +1} =
\left(  \begin{array}{cccc} 
0 &  J_{n/ \tau} &  ~ & 0 \\
\vdots   & ~ & \ddots & ~ \\
0  & 0 & ~  & J_{n/ \tau} \\
J_{n/ \tau}  & 0 & \cdots & 0\\
\end{array}\right);  ~ \cdots
 $$
 $$
\lim_{k \rightarrow \infty} A^{k  \tau + \tau-1} =
\left(  \begin{array}{cccc} 
0 & \cdots  &  0 & J_{n/ \tau} \\
J_{n/ \tau}   & ~ & 0  & 0 \\
~  & \ddots & ~  & \vdots \\
 0 & ~ & J_{n/ \tau} & 0\\
\end{array}\right).
 $$
 Moreover, for all $i,j \in \{1, \ldots, n\}$ there is $r \in \{0, \ldots, \tau-1 \}$ such that the $(i,j)$-entry of the $t$-power of $A$ is nonzero only if $t \equiv r \mod \tau$. 
\end{teorema}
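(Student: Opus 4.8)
The plan is to derive the stated limits from the classical Perron--Frobenius theory of irreducible periodic matrices, combined with the double stochasticity of $A$. First I would invoke the Frobenius normal form for a periodic irreducible matrix: since $A$ is irreducible of period $\tau$, there is a simultaneous permutation of rows and columns (by a permutation matrix $P$) bringing $A$ to the cyclic block form
\[
PAP^{-1} = \begin{pmatrix} 0 & A_0 & 0 & \cdots & 0 \\ 0 & 0 & A_1 & \cdots & 0 \\ \vdots & & & \ddots & \vdots \\ 0 & 0 & 0 & \cdots & A_{\tau-2} \\ A_{\tau-1} & 0 & \cdots & 0 & 0 \end{pmatrix},
\]
in which the diagonal blocks are square and only the blocks $A_0, \ldots, A_{\tau-1}$ just above the diagonal (together with the lower-left corner block) are nonzero. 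Probabilistically this is the partition of the index set into cyclic classes $C_0, \ldots, C_{\tau-1}$ on which $A$ acts by a cyclic shift; equivalently, the eigenvalues of $A$ of modulus $1$ are exactly the $\tau$-th roots of unity, each simple.

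Next I would use double stochasticity to pin down the block sizes. Because the only nonzero entries in the columns indexed by $C_{r}$ lie in the rows indexed by $C_{r-1}$, summing all entries of that off-diagonal block column-by-column gives $|C_r|$, while summing the same entries row-by-row gives $|C_{r-1}|$; hence $|C_0| = \cdots = |C_{\tau-1}| = n/\tau$. Consequently $A^\tau$ is block diagonal with blocks $B_r = A_r A_{r+1} \cdots A_{r-1}$ (indices mod $\tau$), each a doubly stochastic matrix of order $n/\tau$ that is irreducible and aperiodic, i.e.\ primitive.

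Then, for a primitive doubly stochastic matrix $B$ of order $m$, I would recall the standard consequence of Perron--Frobenius: its spectral radius $1$ is a simple eigenvalue, every other eigenvalue has modulus strictly less than $1$, and the associated left and right Perron eigenvectors are both the all-ones vector, so $B^k$ converges to the rank-one spectral projection $\frac{1}{m}\mathbf{1}\mathbf{1}^{T} = J_m$. Applying this to each block $B_r$ yields $A^{k\tau} \to \operatorname{diag}(J_{n/\tau}, \ldots, J_{n/\tau})$, the first displayed limit. For the remaining residues I would write $A^{k\tau + j} = A^{k\tau} A^{j}$: the factor $A^{k\tau}$ tends to the block-diagonal limit, while multiplying by $A^{j}$ shifts the blocks $j$ positions around the cycle; since $J_{n/\tau}$ times any doubly stochastic matrix of order $n/\tau$ equals $J_{n/\tau}$, each surviving block is exactly $J_{n/\tau}$, producing the displayed shifted forms. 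The concluding divisibility statement is then immediate from the cyclic classes: if $i \in C_a$ and $j \in C_b$, then in $t$ steps $A$ moves mass from $C_a$ only into $C_{a+t}$, so $(A^t)_{i,j} \neq 0$ forces $t \equiv b - a \pmod \tau$, and we take $r = (b-a) \bmod \tau$.

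The main obstacle is not any single deep step but the assembly: the genuine use of the double stochastic hypothesis is in forcing the clean, equal-sized $J_{n/\tau}$ blocks, namely establishing the equality of the cyclic class sizes and verifying that the block products collapse to $J_{n/\tau}$ rather than to some other doubly stochastic limit. Everything else is a direct transcription of the Frobenius cyclic normal form and the convergence of powers of a primitive stochastic matrix.
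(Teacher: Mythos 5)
Your proof is correct. The paper offers no proof of this theorem at all---it is quoted as a known consequence of classical Perron--Frobenius theory---and your argument is exactly the standard derivation that quotation points to: the Frobenius cyclic normal form for an irreducible matrix of period $\tau$, the observation that double stochasticity forces the cyclic classes to have equal size $n/\tau$ (column sums give $|C_r|$, row sums give $|C_{r-1}|$), primitivity of the diagonal blocks of $A^\tau$, convergence of powers of a primitive doubly stochastic matrix to $J_{n/\tau}$ because both Perron vectors are uniform, and the shift identity $A^{k\tau+j}=A^{k\tau}A^j$ together with $J_{n/\tau}M=J_{n/\tau}$ for doubly stochastic $M$ to handle the remaining residues.
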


Note that, for a given binary quasigroup $G$ of order $n$ with transition matrix $T$, the matrix $n!^{-1} T$ is a doubly stochastic matrix of order $n^n$.  
By Theorem~\ref{PFform}, there is a  permutation matrix $P$  for which
$$PTP^{-1} = \left(  \begin{array}{ccc}  
B_1 & ~ & 0 \\
~ & \ddots & ~ \\
0 & ~ & B_{m} \\
\end{array}\right),$$
where $B_i$ are some irreducible doubly stochastic matrices.  Next, there is a simultaneous permutation of rows and columns of  $PTP^{-1}$ that preserves its block-diagonal form and puts each block $B_i$ into a block matrix whose limits and structure are given by Theorem~\ref{PerFrob}. In what follows, we assume everywhere that the transition matrix $T$ has the described block-diagonal form.

For a given binary quasigroup $G$, we divide  the set of all $n$-tuples $\mathcal{I}_n^n$  into $m = m(G)$ \textit{equivalence classes} $\mathcal{U}_1, \ldots, \mathcal{U}_{m}$ such that each equivalence class $\mathcal{U}_i$ is exactly the set of rows (or columns) of  the block $B_i$ of the transition matrix $T$. In particular, tuples $U$ and $V$ belong to the same equivalence class $\mathcal{U}_i$ if and only if there exists a $U$-diagonal of type $V$ in $G[d]$ for some $d$.  Let the \textit{period} $\tau_i$ of the equivalence class $\mathcal{U}_i$ be the period of the irreducible block $B_i$.

By Theorem~\ref{PerFrob} and the construction of matrix $T$, we divide each equivalence class $\mathcal{U}_i$ into $\tau_i$ subsets $\mathcal{Y}_1^i, \ldots ,\mathcal{Y}_{\tau_i}^i$ of equal sizes defined by the following property: given tuples $U \in \mathcal{Y}_k^i$, $V \in \mathcal{Y}_l^i$, there is a $U$-diagonal of type $V$ in the $d$-iterated quasigroup $G[d]$ only if  $l - k \equiv d \mod \tau_i$. Let us call such subsets $\mathcal{Y}_1^i, \ldots ,\mathcal{Y}_{\tau_i}^i$  of the equivalence class $\mathcal{U}_i$ by  \textit{units}. Roughly speaking, units $\mathcal{Y}_1^i, \ldots ,\mathcal{Y}_{\tau_i}^i$ determine the block structure of the block $B_i$ of the transition matrix $T$. 

For the binary quasigroup $G$ of order $2$ from Example 1, the set of all tuples $\mathcal{I}_2^2$ composes the single equivalence class $\mathcal{U}_1$, which  consists of two units:
$$\mathcal{Y}^1_1 = \{ (1,2), (2,1)\}, ~~~~ \mathcal{Y}^1_2 = \{ (1,1), (2,2)\}.$$

\section{Diagonals in iterated quasigroups}

By Theorem~\ref{PerFrob}, the question on the asymptotic behavior of the number of $U$-diagonals of type $V$ in a $d$-iterated quasigroup $G[d]$ is reduced to studying of the properties and sizes of equivalence classes and units produced by the transition matrix $T$.  In the present section, we carry out this research.

To distinguish equivalence classes  and their units, we use  $n$-tuples $\textbf{a}_i (b)$, whose entries all equal $a$, except the $i$-th position, which is  equal to $b$. We stress that this notation includes the possibility that $b=a$.

We start this section with the fact that every equivalence class contains at least one tuple of the form $\textbf{a}_i(b)$.

\begin{utv} \label{toconst}
Let $G$ be a binary quasigroup of order $n$. Then for each $n$-tuple $V \in \mathcal{I}_n^n$ and $a,i \in \mathcal{I}_n$ there is $b \in \mathcal{I}_n$ and $k \in \mathbb{N}$ such that  there exists a $V$-diagonal of type  $\textbf{a}_i(b)$ in $G[2k]$.
\end{utv}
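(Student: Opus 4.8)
The plan is to translate the statement into the language of the transition matrix and then reduce it to a combinatorial reachability question. By Lemma~\ref{diagiter} a $V$-diagonal of type $\textbf{a}_i(b)$ in $G[2k]$ exists if and only if the $(V,\textbf{a}_i(b))$-entry of $T^{2k}$ is positive; so it suffices to exhibit, for a suitable even length, a directed path $V=Z_0,Z_1,\dots,Z_{2k}=\textbf{a}_i(b)$ in which each arc $Z\to Z'$ is a single transition, i.e.\ $Z*W=Z'$ entrywise for some permutation $W\in\mathcal W$.

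First I would isolate the terminal arc. Call a tuple $Z$ \emph{distinct off $i$} if its $n-1$ entries $Z_m$ with $m\neq i$ are pairwise distinct. Any such $Z$ has an arc to some $\textbf{a}_i(b)$ with the prescribed $a$: for $m\neq i$ let $W_m$ be the unique solution of $Z_m*W_m=a$; since for fixed $a$ the map sending $z$ to the unique solution of $z*x=a$ is a bijection of $\mathcal I_n$, the $W_m$ ($m\neq i$) are pairwise distinct, so letting $W_i$ be the one missing symbol yields a permutation $W\in\mathcal W$ with $Z*W=\textbf{a}_i(b)$ for $b:=Z_i*W_i$. Hence the whole problem reduces to reaching, from $V$, a distinct-off-$i$ tuple, after which one more arc lands on $\textbf{a}_i(b)$.

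Because $n!^{-1}T$ is doubly stochastic, Theorems~\ref{PFform}--\ref{PerFrob} tell us that the equivalence class $\mathcal C$ of $V$ is an irreducible (strongly connected) block and that reachability inside $\mathcal C$ is governed by its period and units. So the reduction splits into (a) showing $\mathcal C$ contains at least one distinct-off-$i$ tuple, and (b) a parity adjustment ensuring that such a tuple --- equivalently the resulting $\textbf{a}_i(b)$ --- is reached in an even number of steps. For (b) I would use the freedom in $b$: each distinct-off-$i$ tuple gives one $\textbf{a}_i(b)$, two further transitions return from $\textbf{a}_i(b)$ to another $\textbf{a}_i(b')$ (the successors of $\textbf{a}_i(\cdot)$ are again distinct off $i$), and together with the unit structure of Theorem~\ref{PerFrob} this lets me choose a representative at even distance from $V$. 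Part (a) is immediate in the \emph{principal} case: if $\mathcal C$ contains a permutation (in particular if it contains any constant tuple, since permutations and constant tuples form one class, each reaching the other in a single transition), that permutation is already distinct off $i$.

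The remaining case, where $\mathcal C$ contains no permutation, is where the real work lies and is the step I expect to be the main obstacle. One cannot hope to reach a distinct-off-$i$ tuple from $V$ in a \emph{single} step: that would require a permutation $W$ making $\{V_m*W_m:m\neq i\}$ pairwise distinct, i.e.\ a partial transversal of size $n-1$ in the row-Latin square $x\mapsto V_m*x$, which in general is as hard as a near-transversal (Brualdi-type) question. The point of iterating is that over many steps the unavoidable coincidence among the entries need not be removed but only \emph{relocated}: applying a permutation always separates any prescribed colliding pair (distinct columns force distinct products), while for a non-principal class some collision must reappear elsewhere. My plan is therefore to run an improvement/relocation scheme that, as long as two positions other than $i$ agree, composes translations of $G$ so as to drive the (eventually unique) forced repetition onto a pair of positions containing $i$; such a tuple is exactly distinct off $i$, and this is achievable with several iterations precisely because it avoids demanding a one-shot near-transversal. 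Making this relocation rigorous --- controlling where the forced coincidence lands without recreating others, and bounding the number of steps --- is the crux of the proof.
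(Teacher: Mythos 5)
Your reduction is set up correctly as far as it goes: the terminal arc (from a tuple whose entries off position $i$ are pairwise distinct, one transition lands on some $\textbf{a}_i(b)$) is sound, since $z \mapsto$ the solution of $z*x=a$ is a bijection in a quasigroup. But the proof is incomplete at exactly the point you flag yourself: in the case where the class of $V$ contains no permutation, reaching a distinct-off-$i$ tuple is handled only by an unproven ``relocation scheme,'' with no rigorous mechanism for controlling where the forced coincidences land or bounding the number of steps. That step is the whole content of the proposition, so what you have is a reduction, not a proof. Your parity repair (b) is also defective: the detour $\textbf{a}_i(b) \to Z' \to \textbf{a}_i(b')$ has length $2$ and therefore \emph{preserves} parity, so it cannot convert an odd-length path into an even one; and the appeal to ``the unit structure of Theorem~\ref{PerFrob}'' to choose a representative at even distance presupposes that the relevant units contain nearly constant tuples, which in the paper is Proposition~\ref{canoninunits} --- a statement proved \emph{from} Proposition~\ref{toconst}, so this route risks circularity.

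The paper's proof shows that the obstacle you anticipate (a one-shot near-transversal-type problem) never arises, because it does not target distinct-off-$i$ tuples at all. It inducts on the number of positions in which $V$ differs from the tuples $\textbf{a}_i(\cdot)$, and each induction step is a \emph{two-step} move: one finds permutations $W, W'$ with $(V*W)*W' = V'$, where $V'$ agrees with $a$ in one more position than $V$ and keeps the already-fixed positions equal to $a$. The only combinatorial input is that for each $a$ there are exactly $n$ pairs $(w,w')$ with $(a*w)*w'=a$ (one for each choice of $w$); after choosing the pair $(w_{n-k}, w'_{n-k})$ that sends the new entry $v_{n-k}$ to $a$, at most two of those $n$ pairs are spoiled, leaving at least $n-k-1$ pairwise disjoint pairs for the positions already equal to $a$, and the remaining entries of $W$ and $W'$ are completed to permutations arbitrarily. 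Working in two-step increments makes the total length $2k$ automatically even, so no parity repair is needed. If you want to salvage your outline, you would need to either prove your relocation scheme rigorously or replace steps (a)--(b) by this kind of greedy ``constant-ification'' argument.
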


\begin{proof}
The proof is by induction on the number of positions in which tuple $V$ is distinct from tuples $\textbf{a}_i(b)$, $b \in \mathcal{I}_n$. The base of induction is  $V = \textbf{a}_i(b)$ for some $b \in \mathcal{I}_n$. In this case the statement is trivially true because  there is a  $V$-diagonal of type $\textbf{a}_i(b)$ in the $0$-iterated quasigroup $G[0]$.

Assume that the tuple $V$ is different from some tuples $\textbf{a}_i(b)$ in at most $k$ positions, $k \geq 1$.
Without loss of generality, suppose that $i$ equals $n$, the tuple $V$ equals $a$ in the first $n-k -1$ positions and it is different from $a$ in positions $n-k, \ldots, n$: $V = (a, \ldots, a, v_{n-k}, \ldots,  v_{n})$. 

Consider permutations $W, W' \in \mathcal{W}$, $W = (w_1, \ldots, w_n)$, $W' = (w'_1, \ldots, w'_n)$, such that the following equalities hold
\begin{gather*} 
(a * w_1) * w'_1 = a; \\
\vdots \\
(a * w_{n-k-1}) * w'_{n-k-1} = a; \\
(v_{n-k} * w_{n-k}) * w'_{n-k} = a.
\end{gather*} 
In other words, we demand that $(V*W) *W' = V'$ for some $n$-tuple $V'$, whose first $n-k$ positions are equal to $a$. 

Let us show that such permutations $W$ and $W'$ exist.  Firstly, we chose their entries $w_{n-k}$ and $w'_{n-k}$  so that the equality $(v_{n-k} * w_{n-k}) * w'_{n-k} = a$ holds. Note that for every $a \in \mathcal{I}_n$ there are exactly $n$ different pairs $(w,w')$ satisfying the equality $(a * w) *w' = a$. The choice of $w_{n-k}$ and $w'_{n-k} $ spoils no more than two of these pairs, so we are able to find $n-k-1$ different pairs $(w_1,w'_{1}), \ldots ,(w_{n-k-1}, w'_{n-k-1})$ for the first components of permutations $W$ and $W'$. All other components of permutations $W$ and $W'$ are arbitrary.

Thus we have found a tuple $V'$ different from tuples  $\textbf{a}_i(b)$ in at most $k-1$ components and for which there is a $V$-diagonal of type $V'$ in $G[2]$. By the assumption of induction, there is a $V'$-diagonal of type $\textbf{a}_i(b)$ in $G[2k-2]$. Therefore, by Lemma~\ref{diagiter}, one can find a $V$-diagonal of type $\textbf{a}_i(b)$ in $G[2k]$.
\end{proof}

To estimate the sizes and numbers of equivalence classes and their units, we prove the following proposition.

\begin{utv} \label{canoninunits}
Let $G$ be a binary quasigroup of order $n$. Then for given $a,j \in \mathcal{I}_n$ every unit  contains at least one of the tuples $\textbf{a}_j(b)$, $b \in \mathcal{I}_n$. Moreover, for a given equivalence class $\mathcal{U}_i$, each unit $\mathcal{Y}_k^i$ from this class contains  the same number of tuples $\textbf{a}_j(b)$.
\end{utv}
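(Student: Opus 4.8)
The plan is to reduce everything to one explicit two-step move on tuples of the form $\textbf{a}_j(b)$, and then to promote that move to arbitrary lengths. Fix $a,j\in\mathcal{I}_n$ and define $\phi\in S_n$ by letting $\phi(c)$ be the unique element with $(a*c)*\phi(c)=a$; since $x\mapsto a*x$ and $x\mapsto x*y$ are bijections, $\phi$ is well defined and bijective. I will show that for every permutation $W=(w_1,\dots,w_n)$, writing $\phi(W)=(\phi(w_1),\dots,\phi(w_n))$, one has $(\textbf{a}_j(b)*W)*\phi(W)=\textbf{a}_j(b')$ with $b'=(b*w_j)*\phi(w_j)$: each position $p\ne j$ carries $a$ and returns $(a*w_p)*\phi(w_p)=a$, while position $j$ returns $(b*w_j)*\phi(w_j)$. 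Hence for every $c\in\mathcal{I}_n$ there is an $\textbf{a}_j(b)$-diagonal of type $\textbf{a}_j(\rho_c(b))$ in $G[2]$, where $\rho_c(b)=(b*c)*\phi(c)$ is a bijection fixing $a$. By Lemma~\ref{diagiter} this sends a tuple $\textbf{a}_j(b)$ lying in a unit $\mathcal{Y}^i_k$ to a tuple $\textbf{a}_j(\rho_c(b))$ in $\mathcal{Y}^i_{k+2}$, and iterating the move reaches $\mathcal{Y}^i_{k+2m}$ for every $m$.

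For the first assertion I first invoke Proposition~\ref{toconst}: every class $\mathcal{U}_i$ already contains some $\textbf{a}_j(b_0)$, say in $\mathcal{Y}^i_l$. The $\phi$-move then deposits a tuple $\textbf{a}_j(b)$ into every unit $\mathcal{Y}^i_{l+2m}$, covering all units of the parity of $l$. To cross parities I manufacture one seed in an odd-shifted unit: take any permutation $W$ and set $Y=\textbf{a}_j(b_0)*W\in\mathcal{Y}^i_{l+1}$, which lies in $\mathcal{U}_i$; applying Proposition~\ref{toconst} to $Y$ yields a tuple $\textbf{a}_j(b'')$ in $\mathcal{Y}^i_{l+1+2k}$, an odd shift from $\mathcal{Y}^i_l$. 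Running the $\phi$-move from this seed covers all units of the opposite parity, so every unit of $\mathcal{U}_i$ contains some $\textbf{a}_j(b)$. (When $\tau_i$ is odd the even shifts already exhaust all residues and the seed is redundant, so no case split is really needed.)

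For the second assertion the key point is that \emph{any} $\textbf{a}_j(\cdot)$-to-$\textbf{a}_j(\cdot)$ diagonal induces a bijection of the whole family. Suppose $(W_1,\dots,W_d)$ is an $\textbf{a}_j(b_0)$-diagonal of type $\textbf{a}_j(b'')$ with $\textbf{a}_j(b_0)\in\mathcal{Y}^i_k$ and $\textbf{a}_j(b'')\in\mathcal{Y}^i_{k'}$. At every position $p\ne j$ both input and output equal $a$, so these $n-1$ columns are ``$a$-returning'' words that do not involve $b$; therefore for \emph{every} $b$ the same diagonal gives $(\dots((\textbf{a}_j(b)*W_1)*W_2)\cdots)*W_d=\textbf{a}_j(\beta(b))$, where $\beta(b)=(\dots((b*w_{1,j})*w_{2,j})\cdots)*w_{d,j}$ is a composition of right translations, hence a bijection of $\mathcal{I}_n$. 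Being realized by a genuine diagonal, $\beta$ sends each $b$ with $\textbf{a}_j(b)\in\mathcal{U}_i$ to another such $b$ and shifts the unit index by $d\equiv k'-k\pmod{\tau_i}$; so $\beta$ restricts to a bijection from the tuples $\textbf{a}_j(b)$ in $\mathcal{Y}^i_m$ onto those in $\mathcal{Y}^i_{m+(k'-k)}$ for all $m$. Given arbitrary units $\mathcal{Y}^i_k,\mathcal{Y}^i_{k'}$, the first assertion supplies targets $\textbf{a}_j(b_0)\in\mathcal{Y}^i_k$ and $\textbf{a}_j(b'')\in\mathcal{Y}^i_{k'}$, irreducibility of the block $B_i$ provides a diagonal between them, and the induced $\beta$ forces the two units to contain equally many tuples $\textbf{a}_j(b)$.

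The routine ingredients are the $\phi$-move and Proposition~\ref{toconst}; the step I expect to require the most care is the parity crossing in the first assertion, namely seeing that when $\tau_i$ is even one cannot remain within a single parity and must build an odd-shift seed, together with verifying in the second assertion that $\beta$ genuinely maps the family $\{\textbf{a}_j(b)\}$ into itself \emph{inside} one class (so that the ``return to $a$'' at the off-$j$ positions is truly independent of $b$). Everything else is bookkeeping of unit indices modulo $\tau_i$ supplied by Theorem~\ref{PerFrob} and Lemma~\ref{diagiter}.
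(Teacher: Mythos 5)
Your proposal is correct and takes essentially the same route as the paper: the paper also combines Proposition~\ref{toconst} with the two-step return move $(\textbf{a}_j(b)*W)*W'=\textbf{a}_j(b')$ to place a canonical tuple in every unit, and then transports the whole family $\{\textbf{a}_j(b)\}$ along one fixed diagonal, whose action on position $j$ is a bijection, to equate the counts across units. Your explicit $\phi$-map, the parity-crossing seed, and the bijection $\beta$ are just more detailed write-ups of steps the paper leaves terse.
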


\begin{proof}
By Proposition~\ref{toconst}, for given $a,j \in \mathcal{I}_n$ each equivalence class $\mathcal{U}_i$ contains at least one of the tuples $\textbf{a}_j(b)$, $b \in \mathcal{I}_n$.  

Let $\mathcal{Y}_1^i, \ldots, \mathcal{Y}_{\tau_i}^i$ be the units in an equivalence class $\mathcal{U}_i$.  Note that one can always find a pair of permutations $W, W' \in \mathcal{W}$ such that $(\textbf{a}_j(b) * W) * W' = \textbf{a}_j(b')$ for some $b' \in \mathcal{I}_n$. By definition of units and since $\mathcal{Y}_1^i, \ldots, \mathcal{Y}_{\tau_i}^i$ is a partition of the equivalence class $\mathcal{U}_i$,  if $\textbf{a}_j(b)$ belongs to some unit $\mathcal{Y}^i_k$, then the unit $\mathcal{Y}^i_{l}$ with $l \equiv k +2 \mod \tau_i$ contains some tuple $\textbf{a}_j(b')$.  From this and Proposition~\ref{toconst} we deduce that all units within an equivalence class $\mathcal{U}_i$ contain tuples of the form $\textbf{a}_j(b)$, $b \in \mathcal{I}_n$.

Assume that $\textbf{a}_j(b_1), \ldots, \textbf{a}_j(b_l)$  are all tuples of such a form in a unit $\mathcal{Y}_k^i$, and that $\textbf{a}_j (c_1)$ is from any other unit $\mathcal{Y}^i_{k'}$. By definition of units and equivalence classes,  there is some $d \in \mathbb{N}$ for which there exists an $\textbf{a}_j (b_1)$-diagonal of type $\textbf{a}_j(c_1)$ in $G[d]$ corresponding to a collection of permutations  $(W_1, \ldots, W_d)$. Note the same collection of permutations gives $\textbf{a}_j (b_s)$-diagonals of type $\textbf{a}_j(c_s)$ in $G[d]$, $s = 1, \ldots, l$, with all  $c_1, \ldots, c_l$ being  distinct. So each unit $\mathcal{Y}_{k'}^i$ in the class $\mathcal{U}_i$ contains the same number of tuples of the form $\textbf{a}_j(b)$, $b \in \mathcal{I}_n$, as the unit $\mathcal{Y}_k^i$.
\end{proof}

As a trivial corollary of Proposition~\ref{canoninunits}, we see that the total number of all units across all equivalence classes is not greater than the order $n$ of $G$.

Recall that for a given $n$-tuple $V$ we use $V_i(b)$ for denoting the $n$-tuple that coincides with $V$ in all but the $i$-th position and equals $b$ in the $i$-th position. Let  $V_i^*$ be the set  $\{ V_i(1), \ldots, V_i(n) \}$.

\begin{utv} \label{Vinunits}
Let $G$ be a binary quasigroup of order $n$ and $\mathcal{Y}_1^i, \ldots, \mathcal{Y}^i_{\tau_i}$ be the units in an equivalence class $\mathcal{U}_i$. Then for all $V \in \mathcal{I}_n^n$ and $j \in \mathcal{I}_n$ each   unit $\mathcal{Y}^i_k$ contains the  same number $r_i$ of tuples from the set $V_j^*$.
\end{utv}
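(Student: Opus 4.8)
The plan is to reduce the general line $V_j^*$ to a constant-tuple line $\textbf{a}_j^* = \{ \textbf{a}_j(b) : b \in \mathcal{I}_n\}$, which is already under control by Proposition~\ref{canoninunits}, and then to pin down the common value by a global double-counting argument. The whole scheme runs in parallel to the proof of Proposition~\ref{canoninunits}, but transports an entire line at once rather than individual canonical tuples.

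First I would establish a line-preservation fact. Fix a position $j$ and a collection of permutations $(W_1, \ldots, W_d)$. Since the iterated quasigroup acts entrywise and, in each coordinate, is a bijection in its first argument, the map $U \mapsto f(U, W_1, \ldots, W_d)$ sends all tuples of $V_j^*$ to tuples that agree in every position $p \neq j$ (those coordinates of the image do not depend on the $j$-th entry of $U$), while the $j$-th coordinate of the image runs bijectively over $\mathcal{I}_n$ as the $j$-th entry of $U$ does. Hence this map carries $V_j^*$ bijectively onto a single line $(V')_j^*$. Moreover each tuple of $V_j^*$ together with its image constitutes a diagonal in $G[d]$, so the two lie in the same equivalence class; restricted to $\mathcal{U}_i$ the bijection therefore sends $\mathcal{Y}_m^i$ into $\mathcal{Y}_{m+d}^i$ (indices modulo $\tau_i$), by the definition of units.

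Next, fix $V$ and $j$. Applying Proposition~\ref{toconst} with distinguished position equal to $j$ and an arbitrary symbol $a$ yields some $b$ and $k$ and a collection of permutations realizing a $V$-diagonal of type $\textbf{a}_j(b)$ in $G[2k]$. By the line-preservation fact this collection maps $V_j^*$ bijectively onto $\textbf{a}_j^*$, sending $V_j^* \cap \mathcal{Y}_m^i$ onto $\textbf{a}_j^* \cap \mathcal{Y}_{m+2k}^i$. Since Proposition~\ref{canoninunits} asserts that $\textbf{a}_j^*$ meets every unit of $\mathcal{U}_i$ in the same number of tuples, I conclude that each unit $\mathcal{Y}_m^i$ meets $V_j^*$ in one and the same number, say $s_i(j)$, independent of the unit $m$ and of the tuple $V$ (taking $V$ itself to be a constant tuple also shows this count does not depend on the symbol $a$).

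Finally, to remove the dependence on $j$, I would double count. The $n^{n-1}$ lines in direction $j$ partition $\mathcal{I}_n^n$, and by the previous step every such line meets $\mathcal{U}_i$ in exactly $\tau_i s_i(j)$ tuples; summing over all these lines gives $|\mathcal{U}_i| = n^{n-1} \tau_i s_i(j)$, whence $s_i(j) = |\mathcal{U}_i| / (n^{n-1}\tau_i)$ depends only on the class, and this common value is the required $r_i$. I expect the main obstacle to be the careful bookkeeping in the line-preservation step: one must verify that the induced map is genuinely a bijection between whole lines rather than merely sending individual tuples into the correct class, and that it shifts the unit index by a constant amount, so that Proposition~\ref{canoninunits} may be invoked on the image line $\textbf{a}_j^*$. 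Once this is in place, both the uniformity over the units $\mathcal{Y}_m^i$ and the independence of $r_i$ from $V$ and $j$ follow smoothly.
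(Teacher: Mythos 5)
Your proposal is correct and follows essentially the same route as the paper: the "line-preservation" bijection obtained by applying a fixed collection of permutations (coming from Proposition~\ref{toconst}) is exactly what the paper means by "acting similar to the proof of Proposition~\ref{canoninunits}", and the reduction to the canonical line $\textbf{a}_j^*$ plus the appeal to equal unit sizes matches the paper's argument. Your explicit verification that the bijection maps $V_j^*\cap\mathcal{Y}_m^i$ onto $\textbf{a}_j^*\cap\mathcal{Y}_{m+2k}^i$, and the closing double count giving $r_i=|\mathcal{U}_i|/(n^{n-1}\tau_i)$, merely spell out details the paper leaves implicit.
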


\begin{proof}
Proposition~\ref{toconst} and definition of units imply that each unit $\mathcal{Y}_k^i$ in an equivalence class $\mathcal{U}_i$ contains at least one tuple from the set $V^*_j$. Acting similar to the proof of Proposition~\ref{canoninunits}, we see that the number of tuples from $V^*_j$ in $\mathcal{Y}_k^i$  coincides with the number of canonical tuples $\textbf{a}_j(b)$ in this unit. Recall that, by Theorem~\ref{PerFrob}, each unit within an equivalence class $\mathcal{U}_i$ has the same size, so units $\mathcal{Y}_k^i$  contain the same number $r_i$ of tuples from $V^*_j$. 
\end{proof}

Note that Proposition~\ref{Vinunits} is equivalent to the following fact: for all tuples $U, V \in \mathcal{I}_n^n$ and $i \in \mathcal{I}_n$,  every $d$-iterated quasigroup $G[d]$ with large enough $d$ has a $U$-diagonal of type $V_i(a)$ for an appropriate $a \in \mathcal{I}_n$.
As a simple corollary of Proposition~\ref{Vinunits}, we have the below estimation on the sizes of units.

\begin{sled}\label{unitsizes}
Let $G$ be a binary quasigroup of order $n$.
There exist integers $r_i$, $1 \leq r_i \leq n$ such that  each unit in an equivalence  class $\mathcal{U}_i$ has cardinality $r_i  n^{n-1}$.
\end{sled}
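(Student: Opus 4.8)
The plan is to prove this as a double-counting consequence of Proposition~\ref{Vinunits}; essentially all the structural content has already been secured there, and the corollary is a counting bookkeeping on top of it.

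First I would fix a single position $j \in \mathcal{I}_n$ and study how the sets $V_j^* = \{ V_j(1), \ldots, V_j(n) \}$ behave as $V$ ranges over $\mathcal{I}_n^n$. The key observation is that $V_j^*$ depends only on the entries of $V$ outside position $j$: two tuples $V$ and $V'$ satisfy $V_j^* = (V')_j^*$ precisely when they agree in every position except possibly the $j$-th one. Consequently the distinct sets $V_j^*$ are pairwise disjoint, each has exactly $n$ elements, and there are exactly $n^{n-1}$ of them; that is, for a fixed $j$ the family of sets $V_j^*$ partitions the whole cube $\mathcal{I}_n^n$ into $n^{n-1}$ blocks of size $n$. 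This is the one step that requires a moment's care, since the entire argument rests on the blocks tiling $\mathcal{I}_n^n$ exactly once.

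Next I would fix an equivalence class $\mathcal{U}_i$ together with one of its units $\mathcal{Y}_k^i$ and count $|\mathcal{Y}_k^i|$ through these blocks. Every tuple of $\mathcal{Y}_k^i$ lies in exactly one block $V_j^*$ of the partition above, so $|\mathcal{Y}_k^i| = \sum |\mathcal{Y}_k^i \cap V_j^*|$, the sum running over the $n^{n-1}$ blocks. By Proposition~\ref{Vinunits}, each intersection $\mathcal{Y}_k^i \cap V_j^*$ has exactly $r_i$ elements, where $r_i$ is the same across all blocks and all units of $\mathcal{U}_i$. Hence $|\mathcal{Y}_k^i| = r_i\, n^{n-1}$, which is precisely the claimed cardinality.

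Finally I would verify the bounds $1 \leq r_i \leq n$. The upper bound is immediate: each block $V_j^*$ contains only $n$ tuples, so no unit can meet it in more than $n$, giving $r_i \leq n$. For the lower bound I would invoke the opening line of the proof of Proposition~\ref{Vinunits}, which records that each unit $\mathcal{Y}_k^i$ contains at least one tuple from every set $V_j^*$; therefore $r_i \geq 1$. With the partition observation as the only genuine obstacle and the remainder reducing to pure counting, this completes the argument.
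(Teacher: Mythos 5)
Your proof is correct and takes essentially the same route as the paper: the paper states Corollary~\ref{unitsizes} as an immediate consequence of Proposition~\ref{Vinunits}, and your double counting over the partition of $\mathcal{I}_n^n$ into the $n^{n-1}$ pairwise disjoint blocks $V_j^*$ (for a fixed position $j$), each met by every unit in exactly $r_i$ tuples, is precisely the bookkeeping the paper leaves implicit. The bounds $1 \leq r_i \leq n$ are justified exactly as intended, so nothing is missing.
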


Using similar ideas, let us prove that units and equivalence classes are closed under permutations of positions.

\begin{utv} \label{permclosed}
Given a binary quasigroup $G$ of order $n$, if a tuple $V \in \mathcal{I}_n^n$ belongs to a unit $\mathcal{Y}$ of an equivalence class $\mathcal{U}$, then every tuple $V^\pi$, whose coordinates are permuted by $\pi \in S_n$, belongs to $\mathcal{Y}$. 
\end{utv}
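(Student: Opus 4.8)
The plan is to exploit the symmetry of the iterated structure under permuting all $n$ coordinate positions simultaneously. The first step is to observe that if $(W_1, \ldots, W_d)$ is a $U$-diagonal of type $V$ in $G[d]$, i.e. $G[d](U, W_1, \ldots, W_d) = V$ holds entrywise, then applying $\pi$ to the positions of every tuple gives $G[d](U^\pi, W_1^\pi, \ldots, W_d^\pi) = V^\pi$ entrywise: the entrywise computation in position $j$ of the transformed equation is literally the computation in position $\pi(j)$ of the original one. Since $W_l^\pi \in \mathcal{W}$ whenever $W_l \in \mathcal{W}$ and the map $(W_1, \ldots, W_d) \mapsto (W_1^\pi, \ldots, W_d^\pi)$ is a bijection on collections of permutations, the number of $U$-diagonals of type $V$ in $G[d]$ equals the number of $U^\pi$-diagonals of type $V^\pi$ in $G[d]$, for \emph{every} $d$. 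Consequently the map $V \mapsto V^\pi$ preserves the whole transition structure: it sends equivalence classes to equivalence classes preserving their periods, and, because the value of $d$ is left unchanged, it sends units to units. The goal thus reduces to showing that $V \mapsto V^\pi$ fixes each unit setwise.

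Next, since $V^{\sigma \rho} = (V^\sigma)^\rho$, the relation ``$V$ and $V^\pi$ lie in the same unit'' is compatible with composition, so it suffices to treat the case $\pi = \rho$ a transposition, as transpositions generate $S_n$. Assume $n \geq 3$ and let $\rho$ interchange positions $p$ and $q$. The key idea is to produce, in every unit, a tuple fixed by $\rho$. Fix a value $a$ and a position $j \notin \{p,q\}$, which exists since $n \geq 3$. Let $V$ lie in a unit $\mathcal{Y}$ of an equivalence class $\mathcal{U}$. By Proposition~\ref{canoninunits}, the unit $\mathcal{Y}$ contains a tuple $\textbf{a}_j(c)$ for some $c \in \mathcal{I}_n$; since $\rho$ fixes the position $j$ and every other position of $\textbf{a}_j(c)$ carries the same value $a$, we get $(\textbf{a}_j(c))^\rho = \textbf{a}_j(c)$.

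Now I would close the argument using the symmetry from the first step. Because $V$ and $\textbf{a}_j(c)$ lie in the same unit, there is a $V$-diagonal of type $\textbf{a}_j(c)$ in some $G[d]$ with $d \equiv 0$ modulo the period of $\mathcal{U}$; the bijection then yields a $V^\rho$-diagonal of type $(\textbf{a}_j(c))^\rho = \textbf{a}_j(c)$ in the \emph{same} $G[d]$. Hence $V^\rho$ and $\textbf{a}_j(c)$ lie in one equivalence class, joined by a diagonal of length $d \equiv 0$ modulo its period, so they lie in the same unit, namely $\mathcal{Y}$. Thus $V^\rho \in \mathcal{Y}$, and composing transpositions gives $V^\pi \in \mathcal{Y}$ for all $\pi \in S_n$. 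The cases $n \leq 2$ should be disposed of by inspection: constant tuples are fixed by every $\pi$, and for $n=2$ one checks directly that the two permutation tuples lie in a single unit.

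I expect the main obstacle to be precisely the step from ``$V \mapsto V^\pi$ permutes the units'' to ``$V \mapsto V^\pi$ fixes each unit''. The symmetry argument by itself only shows that the units get permuted among themselves, and a priori this induced permutation could be nontrivial. The device that resolves it is to exhibit inside each unit a tuple fixed by the position permutation; such a fixed tuple of canonical form $\textbf{a}_j(c)$ is available precisely because a transposition leaves a position free. This is the reason for first reducing to transpositions, and also the reason the low-order cases $n \le 2$, where no free position remains, must be handled separately.
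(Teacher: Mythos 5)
Your proof is correct. It rests on the same two ingredients as the paper's own proof: the symmetry observation that applying $\pi$ simultaneously to all tuples of a $U$-diagonal of type $V$ in $G[d]$ yields a $U^\pi$-diagonal of type $V^\pi$ in the same $G[d]$, and Proposition~\ref{canoninunits}, which plants canonical tuples $\textbf{a}_j(b)$ in every unit. The organization, however, differs genuinely. The paper exhibits a set $S \subset \mathcal{Y}$ consisting of \emph{all} canonical tuples in the unit and proves that $S$ itself is closed under the whole of $S_n$; that closure step needs an auxiliary argument that the $r$ values occupying the distinguished position form the same set for every choice of position, obtained by anchoring diagonals at two positions and moving the second position around by coordinate permutations fixing the first. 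You avoid constructing $S$ altogether: after reducing to a transposition $\rho = (p\,q)$, you need only \emph{one} tuple of $\mathcal{Y}$ fixed by $\rho$, and $\textbf{a}_j(c)$ with $j \notin \{p,q\}$ does the job; the congruence $d \equiv 0 \pmod{\tau}$ then forces $V^\rho$ back into $\mathcal{Y}$, exactly as in the paper (running the diagonal from $V$ to the fixed tuple rather than the reverse is immaterial, since existence in either direction follows from irreducibility and the unit structure). What your route buys is a shorter argument with no bookkeeping about value sets; what it costs is the explicit case split at $n \leq 2$, which you correctly flag and dispose of. Notably, the paper's closure argument for $S$ carries the same hidden restriction --- tying the value set at position $1$ to that at position $2$ requires a third coordinate --- so your separate treatment of $n = 2$ is a point of additional care rather than a defect.
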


\begin{proof}
It is sufficient to show that there exists a set of tuples $S \subset \mathcal{Y}$  such that for all $U \in S$ and permutations $\pi \in S_n$ the tuple $U^\pi$ belongs to $S$. Indeed, for every tuple  $V \in \mathcal{Y}$, we can find a tuple $U \in S$ such that  there exists a $U$-diagonal  of type $V$ in some $G[d]$ given by a collection of permutations $(W_1, \ldots, W_d)$. Applying  $\pi$ to coordinates of tuples $U$, $V$, and $W_i$, $i = 1, \ldots, d$, we obtain an $U^{\pi}$-diagonal of type $V^\pi$ in $G[d]$. Since $U^\pi$ belongs to $\mathcal{Y}$, the tuple $V^\pi$ is also from the unit $\mathcal{Y}$.

Let us find the required set $S$. By Proposition~\ref{canoninunits}, every unit $\mathcal{Y}$ has a tuple $\textbf{a}_j(b)$ for any $a, j \in \mathcal{I}_n$ and certain $b \in \mathcal{I}_n$. Let $r$ be the number of tuples of the form $\textbf{a}_j(b)$ in $\mathcal{Y}$ for given $a, j \in \mathcal{I}_n$. By definitions of units, there are collections of permutations $D_i = (W_1^i, \ldots, W_d^i)$ corresponding to $\textbf{a}_1(b_i)$-diagonals of types $\textbf{a}_2(c_i)$. Here $\textbf{a}_1(b_i)$ and $\textbf{a}_2(c_i)$ are all tuples of such a form in the unit $\mathcal{Y}$, $i = 1, \ldots, r$.  For every $k\in \mathcal{I}_n$, $k \neq 1$, there is a permutation of coordinates $\pi_k \in S_n$ such that the collections $D_i^{\pi_k}$ correspond to  $\textbf{a}_1(b_i)$-diagonals of types $\textbf{a}_k(c_i)$.

Since all $c_i$ are different, the sets $\{ \textbf{a}_k (c_1), \ldots, \textbf{a}_k (c_r) \}$ consist of all tuples of the form $\textbf{a}_k(b)$. Using similar reasoning for other coordinates of tuples, we conclude that $S = \{ \textbf{a}_i(c_j)  \}$, $i \in \mathcal{I}_n$, $j = 1, \ldots, r$, is the set of all tuples of such a form in the unit $\mathcal{Y}$.  It is easy to see that $S$ is closed under permutations of coordinates.
\end{proof}

A direct corollary of Proposition~\ref{permclosed} is that all permutations $\mathcal{W}$ are contained in a single unit of some equivalence class. Without loss of generality, let $\mathcal{U}_1$ be the equivalence class including the set $\mathcal{W}$. 

Since we are interested in transversals and  $\mathbb{W}$-diagonals of other types in iterated quasigroups, we pay special attention to the class $\mathcal{U}_1$. Most significantly, the class $\mathcal{U}_1$ has several remarkable properties that other equivalence classes do not possess.

\begin{lemma} \label{transclass}
Let $G$ be a  binary quasigroup of order $n$. Then the period $\tau_1$ of the class $\mathcal{U}_1$ is not greater than $2$. Moreover, all permutations $W \in \mathcal{W}$ and tuples $\textbf{a}$, $a \in \mathcal{I}_n$, belong to the equivalence class $\mathcal{U}_1$. 
\end{lemma}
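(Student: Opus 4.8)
The plan is to prove the two assertions separately, using the constant tuples $\textbf{a}$ as a bridge between permutations and a short return walk. The inclusion $\mathcal{W} \subseteq \mathcal{U}_1$ is immediate from the choice of $\mathcal{U}_1$ (the corollary to Proposition~\ref{permclosed}), so the genuinely new content is that each $\textbf{a} \in \mathcal{U}_1$ and that $\tau_1 \leq 2$. Recall that same corollary also tells us that all of $\mathcal{W}$ lies in a \emph{single} unit of $\mathcal{U}_1$; I will call this unit $\mathcal{Y}_1^1$ and keep $\mathbb{W} \in \mathcal{Y}_1^1$ as a fixed base point. Both remaining claims will follow by exhibiting explicit short diagonals emanating from $\mathbb{W}$.

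First I would show $\textbf{a} \in \mathcal{U}_1$ for every $a \in \mathcal{I}_n$ by producing a $\mathbb{W}$-diagonal of type $\textbf{a}$ in $G[1] = G$. For each $i$ let $w_i$ be the unique solution of $i * w_i = a$. If $w_i = w_j$, then $i * w_i = a = j * w_j$ gives $i * w_i = j * w_i$, and right cancellation in the quasigroup forces $i = j$; hence $W = (w_1, \ldots, w_n)$ is a permutation and $\mathbb{W} * W = \textbf{a}$ entrywise. Thus there is a $\mathbb{W}$-diagonal of type $\textbf{a}$ in $G$, so $\mathbb{W}$ and $\textbf{a}$ lie in the same equivalence class, namely $\mathcal{U}_1$. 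This settles the ``moreover'' part.

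For the bound $\tau_1 \leq 2$, I would extend this walk by one further step back into the permutations. For any permutation $W' = (w_1', \ldots, w_n') \in \mathcal{W}$, the entries $a * w_1', \ldots, a * w_n'$ are pairwise distinct by left cancellation, so $\textbf{a} * W'$ is again a permutation. Composing, $(\mathbb{W} * W) * W' = \textbf{a} * W'$ is a permutation $W''$, and the pair $(W, W')$ is therefore a $\mathbb{W}$-diagonal of type $W''$ in $G[2]$. Since both $\mathbb{W}$ and $W''$ belong to the single unit $\mathcal{Y}_1^1$, the defining property of units (Theorem~\ref{PerFrob}) yields $1 - 1 \equiv 2 \pmod{\tau_1}$, that is $\tau_1 \mid 2$, whence $\tau_1 \leq 2$.

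The argument is short, and the only points requiring care are the two bijectivity facts — that fixing the symbol $a$ makes $i \mapsto w_i$ a permutation, and that a single row of the Cayley table is a permutation — together with the correct bookkeeping of units: I must be sure the length-$2$ walk both starts and ends in the \emph{same} unit $\mathcal{Y}_1^1$, which is precisely what forces $0 \equiv 2 \pmod{\tau_1}$ rather than merely $\tau_1 \mid k$ for some uncontrolled $k$. I do not expect a genuine obstacle beyond invoking these facts precisely.
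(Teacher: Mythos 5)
Your proof is correct and takes essentially the same route as the paper: both exhibit a permutation-to-$\textbf{a}$ diagonal and an $\textbf{a}$-to-permutation diagonal in $G[1]$ via quasigroup division, and compose them into a length-$2$ walk to force $\tau_1 \mid 2$. The only cosmetic difference is that the paper solves $\textbf{a} * W'' = W$ so as to return to the \emph{same} permutation (giving a closed walk directly), whereas you land on a possibly different permutation $W''$ and close the argument with the single-unit corollary of Proposition~\ref{permclosed}; both steps are valid and rest on the same cancellation facts.
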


\begin{proof}
Since $G$ is a quasigroup, for every permutation $W \in \mathcal{W}$ and $a\in \mathcal{I}_n$ there are permutations $W', W'' \in \mathcal{W}$ such that $W * W' = \textbf{a}$ and $\textbf{a} * W'' = W$.  In other words,  the quasigroup  $G = G[1]$ contains $W$-diagonals of type $\textbf{a}$ and $\textbf{a}$-diagonals  of type $W$. Therefore, $\textbf{a}$ and $W$ belong to the same equivalence class and the period of this class is equal to $1$ or $2$.
\end{proof}

For the equivalence class $\mathcal{U}_1$ we can refine the general estimation on the size from Corollary~\ref{unitsizes} with the help of the following quasigroup invariant.

Given a binary quasigroup $G$, define $P^k (G)$ to be the set of all elements of $G$ that allow a factorization (ordered and bracketed in any way) containing every element of $G$ precisely $k$ times. For shortness, let $P(G) = P^1(G)$.

For all $k \in \mathbb{N}$ it holds $P^k(G) \subseteq P^{k+2} (G)$, since for each $g \in P^k(G)$ there is $h \in G$ such that $g * h = g$ and $h$ can be presented as a product $f_1 * f_2$ exactly $n$ times, where $f_1$ ranges over the elements of $G$ as
$f_2$ also ranges over the elements of $G$.
Note that there are examples where $P^k(G) \not\subseteq P^{k+1} (G)$, e.g., when $G$ is $\mathbb{Z}_4$.
 
Let us define the set $P^{\infty} (G) = \bigcup\limits_{i=1}^{\infty} P^i (G)$. It is well defined due to $P^k(G) \subseteq P^{k+2} (G)$ and $G$ is a finite quasigroup.  It is easy to check that $P^{\infty} (G)$ is a subquasigroup in $G$. 

For a tuple $V \in \mathcal{I}_n^k$, $V = (v_1, \ldots, v_k)$, $v_i \in G$, let
 $$\Pi(V) = (\cdots(v_1 * v_2) * \cdots * v_{k-1}) * v_k.$$

From the definitions of sets $P^\infty(G)$ and $\Pi(V)$, we have the following property.

\begin{utv} \label{U1prod}
Let $G$ be a quasigroup of order $n$.  Then for every tuple $V \in \mathcal{U}_1$ it holds $\Pi(V) \in P^\infty(G)$. 
\end{utv}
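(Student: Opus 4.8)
The plan is to read off $\Pi(V)$ directly from a diagonal connecting a permutation to $V$, and then to observe that every element of $G$ occurs with the same multiplicity in the resulting product. Since $V \in \mathcal{U}_1$ and, by Lemma~\ref{transclass}, the identity permutation $\mathbb{W}$ also lies in $\mathcal{U}_1$, the irreducibility of the block $B_1$ furnished by Theorem~\ref{PerFrob} guarantees some $d$ for which the $(\mathbb{W},V)$-entry of $T^d$ is positive, i.e.\ a $\mathbb{W}$-diagonal of type $V$ in $G[d]$. By definition this is a collection of permutations $(W_1, \ldots, W_d)$, $W_i = (w_{i,1}, \ldots, w_{i,n}) \in \mathcal{W}$, with $G[d](\mathbb{W}, W_1, \ldots, W_d) = V$ holding entrywise. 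Writing $V = (v_1, \ldots, v_n)$ and unfolding the definition of $G[d]$, this says that for each position $p$
$$ v_p = (\cdots((p * w_{1,p}) * w_{2,p}) * \cdots) * w_{d,p}. $$

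Next I would substitute these expressions into $\Pi(V) = (\cdots(v_1 * v_2) * \cdots) * v_n$. Because each $v_p$ is literally \emph{equal}, as an element of $G$, to the nested product above, this substitution replaces equals by equals and so leaves the value $\Pi(V)$ unchanged; crucially, no associativity is invoked. The outcome is a single, fully parenthesized product of the atomic factors $1, 2, \ldots, n$ (contributed by $\mathbb{W}$) together with all the $w_{i,p}$, taken in a fixed order and with a fixed bracketing (the outer left-nesting coming from $\Pi$, the inner left-nesting from each $v_p$). The decisive point is the multiplicity count: the factors drawn from the first coordinate run over $1, \ldots, n$, hence over every element of $G$ exactly once, and for each fixed $i$ the factors $w_{i,1}, \ldots, w_{i,n}$ likewise range over every element of $G$ exactly once, as $W_i$ is a permutation. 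Therefore each element of $G$ appears in this product precisely $d+1$ times.

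Since the definition of $P^k(G)$ admits factorizations ordered and bracketed in any way, the displayed expression exhibits $\Pi(V)$ as an element of $P^{d+1}(G)$, whence $\Pi(V) \in P^\infty(G)$, as required. The argument is in essence bookkeeping, and the only step requiring care is the first one: one must make sure that membership $V \in \mathcal{U}_1$ yields a diagonal \emph{starting} from a permutation, rather than merely one of type a permutation, so that the first-coordinate contributes each element of $G$ exactly once; this is precisely what Lemma~\ref{transclass} and the irreducibility of $B_1$ provide. The remaining subtlety, easily checked, is that the nested substitution produces a legitimately bracketed product to which the arbitrary-bracketing clause in the definition of $P^k(G)$ applies verbatim.
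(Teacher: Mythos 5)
Your proof is correct and takes essentially the same route as the paper: the paper's one-sentence argument is precisely the observation that $V \in \mathcal{U}_1$ can be written entrywise as a product of permutations $(W_0, \ldots, W_d)$, so that $\Pi(V)$ unfolds into a bracketed product containing every element of $G$ exactly $d+1$ times, i.e.\ $\Pi(V) \in P^{d+1}(G) \subseteq P^{\infty}(G)$. You merely spell out the bookkeeping that the paper leaves implicit (and fix the starting permutation to be $\mathbb{W}$ rather than an arbitrary $W_0$), which changes nothing essential.
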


\begin{proof}
It is sufficient to note that for  every  $V \in \mathcal{U}_1$ there is a collection of permutations $(W_0, \ldots, W_d)$ such that $V = (\cdots (W_0 * W_1) * \cdots *W_{d-1}) * W_d $.
\end{proof}

From this statement, we have the following bound on the size of $\mathcal{U}_1$.

\begin{sled} \label{U1size}
Given a binary quasigroup $G$ of order $n$, the cardinality of the equivalence class $\mathcal{U}_1$ is $p n^{n-1}$, where  $1 \leq p \leq |P^{\infty} (G)|$.
\end{sled}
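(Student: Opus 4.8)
The plan is to read off the cardinality of $\mathcal{U}_1$ from its unit structure and then bound it using the product map $\Pi$. By Lemma~\ref{transclass} the period $\tau_1$ of $\mathcal{U}_1$ is at most $2$, and by Corollary~\ref{unitsizes} each of its $\tau_1$ units has cardinality $r_1 n^{n-1}$ for some integer $r_1$ with $1 \leq r_1 \leq n$. Hence $|\mathcal{U}_1| = \tau_1 r_1 n^{n-1}$, and setting $p = \tau_1 r_1$ gives a positive integer with $|\mathcal{U}_1| = p n^{n-1}$; the bound $p \geq 1$ is immediate since $\mathcal{U}_1$ is nonempty (it contains all permutations). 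The real content of the statement is therefore the upper bound $p \leq |P^{\infty}(G)|$.

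For the upper bound I would exploit the map $\Pi \colon \mathcal{I}_n^n \to \mathcal{I}_n$, $V \mapsto \Pi(V)$, introduced just before Proposition~\ref{U1prod}. The key observation is that every fiber of $\Pi$ has exactly $n^{n-1}$ elements. Indeed, given a target value $c \in \mathcal{I}_n$, one may choose the first $n-1$ entries $v_1, \ldots, v_{n-1}$ of $V$ freely, which fixes the partial product $(\cdots(v_1 * v_2) * \cdots) * v_{n-1}$, and then the quasigroup property determines the last entry $v_n$ uniquely so that this partial product times $v_n$ equals $c$. Thus $|\Pi^{-1}(c)| = n^{n-1}$ for every $c \in \mathcal{I}_n$, and consequently $|\Pi^{-1}(S)| = |S| \cdot n^{n-1}$ for any subset $S \subseteq \mathcal{I}_n$, the fibers over distinct values being disjoint.

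To finish, I would invoke Proposition~\ref{U1prod}, which guarantees that $\Pi(V) \in P^{\infty}(G)$ for every $V \in \mathcal{U}_1$; equivalently, $\mathcal{U}_1 \subseteq \Pi^{-1}(P^{\infty}(G))$. Combining this inclusion with the fiber count yields
$$ p\, n^{n-1} = |\mathcal{U}_1| \leq \left| \Pi^{-1}\bigl(P^{\infty}(G)\bigr) \right| = |P^{\infty}(G)| \cdot n^{n-1}, $$
whence $p \leq |P^{\infty}(G)|$, as required.

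The argument is short, and the only place demanding any care is the uniform fiber computation for $\Pi$; everything else is bookkeeping built on the already-established unit structure together with Proposition~\ref{U1prod}. I do not expect a genuine obstacle here, since the constant fiber size $n^{n-1}$ is a direct consequence of the unique solvability defining a quasigroup, and the containment $\Pi(\mathcal{U}_1) \subseteq P^{\infty}(G)$ has already been isolated as a separate statement.
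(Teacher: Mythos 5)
Your proof is correct, and it reaches the bound by a somewhat different counting route than the paper. Both arguments pivot on Proposition~\ref{U1prod} (every $V \in \mathcal{U}_1$ satisfies $\Pi(V) \in P^{\infty}(G)$), but the paper does its counting inside a single line $V_j^*$: by Proposition~\ref{Vinunits}, the constant $p$ \emph{is} the number of tuples of the form $V_j(b)$ lying in $\mathcal{U}_1$, and since $b \mapsto \Pi(V_j(b))$ is a bijection of $\mathcal{I}_n$, exactly $|P^{\infty}(G)|$ of the $n$ tuples in $V_j^*$ have product in $P^{\infty}(G)$, whence $p \leq |P^{\infty}(G)|$. You instead bound the whole class globally: $\mathcal{U}_1 \subseteq \Pi^{-1}\bigl(P^{\infty}(G)\bigr)$, and every fiber of $\Pi$ has size exactly $n^{n-1}$, so $p\, n^{n-1} \leq |P^{\infty}(G)|\, n^{n-1}$. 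The two counting facts (bijectivity of $\Pi$ along a line, uniform fiber size of $\Pi$) are equivalent faces of unique division in a quasigroup, so the substance is the same; what your version buys is self-containedness, since you only need the unit sizes from Corollary~\ref{unitsizes} rather than the equidistribution statement of Proposition~\ref{Vinunits}, while the paper's version buys a concrete interpretation of $p$ as a count of tuples $\textbf{a}_j(b)$ in the class, which is exactly what gets reused later (e.g., Proposition~\ref{loopimpr} proceeds ``similar to the proof of Corollary~\ref{U1size}'' by intersecting a line with a coset of $G'$). Your identification $p = \tau_1 r_1$ also agrees with the paper's $p$, since each of the $\tau_1$ units contains $r_1$ tuples from $V_j^*$.
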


\begin{proof}
By Proposition~\ref{Vinunits} and Corollary~\ref{unitsizes}, the size of the equivalence class $\mathcal{U}_1$ is equal to $p n^{n-1}$, where $p$ is the number of tuples from the sets $V^*_j$ in the class $\mathcal{U}_1$. For a given $V \in \mathcal{I}_n^n$ there are exactly $|P^\infty (G)|$ tuples $U \in V^*_j$ with $\Pi(U) \in P^\infty (G)$.  By Proposition~\ref{U1prod}, for every $U \in \mathcal{U}_1$ we have $\Pi(U) \in P^\infty (G)$,  therefore the constant $p$ is not greater than $|P^\infty (G)|$.
\end{proof}

Unfortunately, the size of the class $\mathcal{U}_1$ for a quasigroup $G$ is not defined by $|P^\infty(G)|$ and we cannot distinguish tuples $U$ and $V$ from different equivalence classes by the sets $\Pi(U)$ and $\Pi(V)$.

\textbf{Example 2.}
Consider a binary quasigroup $G$ of order $4$ with the Cayley table
$$\begin{array}{c|cccc}
*  & 1 & 2 & 3 & 4 \\
\hline
1 & 2 & 1 & 4 & 3 \\
 2 & 1 & 2 & 3 & 4 \\
3 & 3 & 4 & 1 & 2 \\
4 & 4 & 3 & 2 & 1 
\end{array}$$

It is easy to see that $G$ is isotopic to the group $\mathbb{Z}_2^2$ by the means of transposition of the first two rows. 

By Lemma~\ref{classdescr} from the next section, the group $\mathbb{Z}_2^2$ has four equivalence classes $\mathcal{U}_1, \ldots, \mathcal{U}_4$ of period $1$, where $\mathcal{U}_i = \{ V |\Pi(V) = i  \}.$
With the help of Lemma~\ref{tranitiso}, we obtain that the quasigroup $G$ has three equivalence classes: $\mathcal{U}'_1 = \mathcal{U}_1$, $\mathcal{U}'_2 = \mathcal{U}_2$, and $\mathcal{U}'_3 = \mathcal{U}_3 \cup \mathcal{U}_4$. Here classes $\mathcal{U}'_1$ and $\mathcal{U}'_2$ have period $1$, and the class $\mathcal{U}'_3$ has period $2$.

It can be verified directly that there are tuples $U \in \mathcal{U}'_1$ and $V \in \mathcal{U}'_2$ (e.g. $U = (1,2,3,4)$ and $V = (1,1,3,4)$) such that $\bigcup\limits_{\pi \in S_4}\Pi(U^\pi) = \bigcup\limits_{\pi \in S_4}\Pi(V^\pi) =P^\infty(G) = \{ 1,2 \}$ in the quasigroup $G$.

Summarizing obtained results, we prove the main theorem of the present section that describes diagonals in a general iterated quasigroup.

\begin{teorema} \label{quasigeneral}
Let $G$ be a binary quasigroup of order $n$, $U, V \in \mathcal{I}_n^n$, and  $\mathcal{U}_1, \ldots, \mathcal{U}_{m}$  be the  partition of $\mathcal{I}_n^n$ into equivalence classes of periods $\tau_1, \ldots, \tau_m$, respectively. Suppose that all units of a class $\mathcal{U}_i$ have sizes $r_i n^{n-1}$, $1 \leq r_i \leq n$. Then the following hold:
\begin{itemize}
\item If tuples $U$ and $V$ belong to different equivalence classes, then for all $d \in \mathbb{N}$ there are no $U$-diagonals of type $V$ in the $d$-iterated quasigroup $G[d]$.
\item If both $U$ and $V$ belong to the same equivalence class $\mathcal{U}_i$, then there is $0 \leq k \leq \tau_i - 1$ such that for all $d$ greater than some $d_0 \in \mathbb{N}$ the number $T_{U,V} (d)$ of $U$-diagonals of type $V$ in the $d$-iterated quasigroup $G[d]$ is 
$$T_{U,V} (d) = \frac{n!^{d}}{r_i n^{n-1}}   \cdot(1 + o(1)) $$
if $d \equiv k \mod \tau_n$ and $T_{U,V} (d) = 0$ otherwise. 
\end{itemize}
\end{teorema}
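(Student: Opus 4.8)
The plan is to translate the combinatorial count entirely into the language of matrix powers and then read off the answer from the Perron--Frobenius structure already assembled above. By Lemma~\ref{diagiter}, the number of $U$-diagonals of type $V$ in $G[d]$ is exactly the $(U,V)$-entry of $T^{d}$, so $T_{U,V}(d) = (T^{d})_{U,V}$. Since the row and column sums of $T$ equal $n!$, the normalized matrix $A = n!^{-1} T$ is doubly stochastic, and $T_{U,V}(d) = n!^{d} (A^{d})_{U,V}$. Under the standing assumption that $T$ (hence $A$) is already in the block-diagonal form of Theorem~\ref{PFform}, the whole problem reduces to the behavior of a single entry of the powers of an irreducible doubly stochastic block.

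For the first bullet I would simply invoke this block-diagonal form: the equivalence classes $\mathcal{U}_1, \ldots, \mathcal{U}_m$ are by construction the index sets of the irreducible blocks of $A$, so $A^{d} = \mathrm{diag}(B_1^{d}, \ldots, B_m^{d})$ and the $(U,V)$-entry vanishes for every $d$ whenever $U \in \mathcal{U}_i$ and $V \in \mathcal{U}_{i'}$ with $i \neq i'$. Equivalently, this is the defining property of equivalence classes, namely that $U$ and $V$ lie in the same class iff some $G[d]$ admits a $U$-diagonal of type $V$. Either way, $T_{U,V}(d) = 0$ for all $d$.

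For the second bullet, with $U, V \in \mathcal{U}_i$ we have $(A^{d})_{U,V} = (B_i^{d})_{U,V}$, where $B_i$ is irreducible doubly stochastic of period $\tau_i$. Its order equals $|\mathcal{U}_i|$, and since by Corollary~\ref{unitsizes} the class $\mathcal{U}_i$ splits into $\tau_i$ units of size $r_i n^{n-1}$ each, this order is $N_i := \tau_i r_i n^{n-1}$. Now Theorem~\ref{PerFrob} gives two facts simultaneously. First, there is a fixed residue $k = k(U,V) \in \{0, \ldots, \tau_i - 1\}$ for which $(B_i^{d})_{U,V}$ can be nonzero only when $d \equiv k \mod \tau_i$, which yields $T_{U,V}(d) = 0$ off this progression. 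Second, along $d \equiv k \mod \tau_i$ the entry converges to the corresponding entry of the limit matrix, which sits in a block $J_{N_i/\tau_i}$ with all entries equal to $\tau_i/N_i = 1/(r_i n^{n-1})$. Multiplying back by $n!^{d}$ gives
$$T_{U,V}(d) = n!^{d}\left( \frac{1}{r_i n^{n-1}} + o(1) \right) = \frac{n!^{d}}{r_i n^{n-1}}(1 + o(1))$$
for $d \equiv k \mod \tau_i$ and $d$ large, as claimed.

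The argument is essentially an assembly of the preceding results, so there is no deep obstacle; the one place demanding care is the bookkeeping of the normalization. One must check that the order $N_i$ of the block divided by its period $\tau_i$ equals \emph{exactly} the unit size $r_i n^{n-1}$, so that the limiting $J$-block has entries $1/(r_i n^{n-1})$ and not some other value. This is precisely where Corollary~\ref{unitsizes} (through Propositions~\ref{canoninunits} and~\ref{Vinunits}) enters, and it is what makes the constant in the asymptotic come out as stated rather than merely up to an unspecified factor.
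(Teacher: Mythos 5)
Your proposal is correct and follows essentially the same route as the paper's own proof: reduce to powers of the transition matrix via Lemma~\ref{diagiter}, apply the block-diagonal form of Theorem~\ref{PFform} for the first bullet, and use the Perron--Frobenius limits of Theorem~\ref{PerFrob} together with the unit sizes from Corollary~\ref{unitsizes} for the second. Your write-up is merely more explicit than the paper's about the normalization $A = n!^{-1}T$ and the identification of the limiting $J$-block entries with $1/(r_i n^{n-1})$, which is exactly the bookkeeping the paper leaves implicit.
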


\begin{proof}
Theorems~\ref{PFform} and~\ref{PerFrob} give the partition into equivalence classes and units. By Theorem~\ref{PFform}, there are no $U$-diagonals of types $V$ in $G[d]$ if $U$ and $V$ are from different equivalence classes. By Theorem~\ref{PerFrob} and definition of units, there are no $U$-diagonals of types $V$ in $G[d]$ if both $U$ and $V$ are from unit $\mathcal{U}_i$ of period $\tau_i$, $U \in \mathcal{Y}_k^i$, $V \in \mathcal{Y}_l^i$ but $d \not\equiv k-l \mod \tau_i$. Otherwise, Theorem~\ref{PerFrob} states that for large $d$ the number of $U$-diagonals of types $V$ in $G[d]$ is close to $ \frac{1}{M_i} n!^d$, where $M_i$ is the size of units in the equivalence class $\mathcal{U}_i$, for which $U,V \in \mathcal{U}_i$. The sizes of units were estimated in Corollary~\ref{unitsizes}.
\end{proof}

% Lemma~\ref{transclass} and Corollary~\ref{U1size} allow us to slightly improve this theorem for transversals and diagonals of other types and will be used in the proofs of the main results of the paper.

\subsection{Diagonals in iterated loops and other special quasigroups}

In this section, we consider how additional conditions on a quasigroup $G$ affect  equivalence classes and their units. We start with quasigroups having the right inverse-property. 

A quasigroup $(G,*)$ is said to have the \textit{right inverse-property} if there is a permutation $\pi $ of the set $G$  such that $(g * h) * \pi (h) = g$ for all $g, h \in G$.

\begin{utv}
Let $G$ be a binary quasigroup with the right-inverse property. Then each equivalence class $\mathcal{U}_i$ has period $1$ or $2$.
\end{utv}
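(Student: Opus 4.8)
The plan is to show that the right inverse-property forces period at most 2 by exhibiting, for every tuple $V$, a collection of permutations of even length that returns to a tuple closely related to $V$. Recall that the period $\tau_i$ of an equivalence class $\mathcal{U}_i$ is the gcd of all $d$ for which some $V$-diagonal of type $V$ exists in $G[d]$. So to bound $\tau_i$ by $2$, it suffices to produce, for a representative tuple $V \in \mathcal{U}_i$, a $V$-diagonal of type $V$ in $G[d]$ for some even $d$, together with one in some $G[d']$ with $d'$ of the opposite parity or coprime residue — or, more cleanly, to find $V$-to-$V$ returns in two iterated quasigroups whose lengths have gcd dividing $2$.

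First I would use the right inverse-property directly: for all $g,h \in G$ we have $(g * h) * \pi(h) = g$. I want to read this entrywise. Given a tuple $V$, pick any permutation $W \in \mathcal{W}$; then $V * W$ is some tuple $Z$, and applying the defining relation coordinatewise gives $Z * \pi(W) = V$, where $\pi(W) = (\pi(w_1), \ldots, \pi(w_n))$ is again a permutation since $\pi$ is a bijection. Thus the pair $(W, \pi(W))$ is a $V$-diagonal of type $V$ in $G[2]$, for \emph{any} choice of $W \in \mathcal{W}$. This shows that every tuple $V$ admits a $V$-diagonal of type $V$ in $G[2]$, so $2$ lies in the set of return lengths for each equivalence class.

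The key point then is that the existence of a $V$-to-$V$ return of length $2$ already forces the period to divide $2$: by definition of period as a gcd over return lengths, having $2$ as a return length means $\tau_i \mid 2$, hence $\tau_i \in \{1,2\}$. I would spell this out by invoking Lemma~\ref{diagiter} to note that iterating the length-$2$ return gives $V$-to-$V$ diagonals in $G[2k]$ for every $k$, so all even numbers are return lengths; the period, being the gcd of the return-length set containing all even numbers, is therefore $1$ or $2$. (It is exactly $1$ when some odd return length also exists, and $2$ when all returns have even length.)

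The only subtlety — and the step I would treat most carefully — is the entrywise reading of the right inverse-property, making sure that $\pi(W)$ is genuinely a permutation and that applying the relation coordinate by coordinate yields a valid $V$-diagonal of type $V$ rather than merely an $n$-tuple identity. Since $\pi$ is a permutation of $G$ and $W \in \mathcal{W}$ has distinct entries, $\pi(W)$ again has distinct entries, so $\pi(W) \in \mathcal{W}$, and the middle tuples required in the definition of a diagonal are indeed permutations. This mirrors the argument in Lemma~\ref{transclass}, where products of permutations were used to link $\textbf{a}$ and $W$ within $\mathcal{U}_1$; here the right inverse-property supplies the needed second permutation uniformly, so no case analysis on $V$ is necessary. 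There is no real obstacle beyond this bookkeeping, since the right inverse-property hands us the length-$2$ return for free.
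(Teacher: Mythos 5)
Your proposal is correct and rests on the same key idea as the paper's proof: the right inverse-property, read entrywise, gives $(V * W) * \pi(W) = V$ for any permutation $W \in \mathcal{W}$ (with $\pi(W) \in \mathcal{W}$ since $\pi$ is a bijection), i.e., a length-$2$ return, and having $2$ among the return lengths forces the period to divide $2$. The only difference is cosmetic: the paper first invokes Proposition~\ref{canoninunits} to place a canonical tuple $\textbf{a}_j(b)$ in each equivalence class and applies the relation to that tuple, whereas you apply it to every tuple $V$ directly, which is equally valid and even dispenses with that preliminary step.
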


\begin{proof}
By Proposition~\ref{canoninunits}, each equivalence class $\mathcal{U}_i$  of $G$ contains tuples of the form $\textbf{a}_j(b)$. Since $G$ has the right inverse-property, for every permutation $W \in \mathcal{W}$  there is  a permutation $W' \in \mathcal{W}$ such that $(\textbf{a}_j(b) * W) * W' = \textbf{a}_j(b)$. Thus the period of the tuple $\textbf{a}_j (b)$ from $\mathcal{U}_i$ is not greater than $2$.
\end{proof}

The right-inverse property (and the complementary left-inverse property) was introduced in book~\cite{belous.quasigr}. If a quasigroup $G$ has the left and right inverse-properties, then $G$ is said to be a quasigroup with the inverse property or an IP-quasigroup. For more information on inverse properties of quasigroups see~\cite{KeedSher.invprop}.

Next, we consider the case of loops. A quasigroup $G$ is said to be a \textit{loop} if there is the \textit{identity element} $e \in G$ such that for each $g \in G$ it holds $e * g = g * e = g$.

Given a loop $G$, a normal subloop of $G$ is defined in the same way as for a general algebraic system. 
Let the \textit{commutator subloop} $G'$ of a loop $G$ be the smallest normal subloop of $G$ such that $G / G'$ is an abelian group.

In~\cite{pula.prodquasi} it was proved a number of properties of sets $P^k(G)$ and $P^\infty(G)$,  when $G$ is a loop. For example, the set $P^\infty (G)$ is a subloop of $G$ and  for every $k \in \mathbb{N}$ the set $P^k(G)$ is contained in a single coset of $G'$. Moreover, if $P^{\infty} (G)$ is a normal subloop, then $P^{\infty} (G) = G'$ or $P^{\infty} (G) = G' \cup g G'$, where $g* g \in G'$.

These results give the following improvement on the sizes of units in the equivalence class $\mathcal{U}_1$.

\begin{utv} \label{loopimpr}
If $G$ is a loop of order $n$, then the size of units in the equivalence class $\mathcal{U}_1$ is equal to $r n^{n-1}$ for some integer $r$, $1 \leq r \leq |G'|$.
\end{utv}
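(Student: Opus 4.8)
The plan is to sharpen the bound $r \le n$ coming from Corollary~\ref{unitsizes} to $r \le |G'|$ by controlling the value of $\Pi$ on a single unit. Recall from Corollary~\ref{unitsizes} that every unit of $\mathcal{U}_1$ has size $r n^{n-1}$ with $1 \le r \le n$, and that by Proposition~\ref{Vinunits} each such unit meets every set $V_j^*$ in exactly $r$ tuples. The crucial elementary observation is that the map $b \mapsto \Pi(V_j(b))$ is a bijection of $\mathcal{I}_n$ onto $G$: indeed $\Pi(V_j(b))$ is obtained from $b$ by one left multiplication (by the prefix product $\Pi(v_1, \ldots, v_{j-1})$) followed by several right multiplications (by $v_{j+1}, \ldots, v_n$), each of which is a bijection of the quasigroup $G$. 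Hence $\Pi$ restricts to a bijection $V_j^* \to G$, and it will suffice to show that $\Pi$ maps each unit of $\mathcal{U}_1$ into a single coset of the commutator subloop $G'$; then $|\mathcal{Y} \cap V_j^*| \le |G'|$ for every unit $\mathcal{Y}$, and therefore $r \le |G'|$.

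First I would record how $\Pi$ transforms along a diagonal. Passing to the abelian group $G/G'$ and writing $\overline{x}$ for the image of $x \in G$, suppose that $(W_1, \dots, W_d)$ realises a $U$-diagonal of type $V$ in $G[d]$, so that $v_i = (\cdots (u_i * w_{1,i}) * \cdots) * w_{d,i}$ holds entrywise. Since $G/G'$ is abelian, bracketing and order are immaterial there, and
$$\overline{\Pi(V)} = \prod_{i=1}^{n} \overline{v_i} = \Big(\prod_{i=1}^n \overline{u_i}\Big)\prod_{j=1}^{d}\Big(\prod_{i=1}^n \overline{w_{j,i}}\Big) = \overline{\Pi(U)}\,\sigma^{d},$$
where $\sigma = \prod_{g \in G} \overline{g}$ is the product of all elements of $G/G'$ (each permutation $W_j$ contributes precisely this product). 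The key point is that $\sigma^2 = 1$: pairing each element of the abelian group $G/G'$ with its inverse cancels all non-involutions, so $\sigma$ is a product of elements of order $2$ and is itself an involution or the identity.

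It then remains to deduce that $\Pi$ is constant modulo $G'$ on each unit. By Lemma~\ref{transclass} the class $\mathcal{U}_1$ has period $\tau_1 \le 2$. Fix a permutation $W_0 \in \mathcal{W} \subseteq \mathcal{U}_1$ lying in a unit declared to have phase $0$. For any $V$ in a unit of phase $l$, every $W_0$-diagonal of type $V$ lives in some $G[d]$ with $d \equiv l \bmod \tau_1$; since $\sigma^2 = 1$ this yields $\sigma^{d} = \sigma^{l}$, so $\overline{\Pi(V)} = \overline{\Pi(W_0)}\,\sigma^{l}$ depends only on the unit containing $V$. When $\tau_1 = 1$ the class is a single primitive block, so any two of its tuples are joined by diagonals of both parities; the relation $\overline{\Pi(V)} = \overline{\Pi(W_0)}\sigma^{d}$ for both even and odd $d$ then forces $\sigma = 1$ and the same conclusion. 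In either case each unit is sent by $\Pi$ into one coset of $G'$, and combining with the bijection $\Pi\colon V_j^* \to G$ gives $r \le |G'|$, as claimed. I expect the only delicate point to be the phase bookkeeping in this last step—verifying that $\sigma^{d}$ depends on the unit and not on the particular diagonal chosen—and it is exactly the combination $\tau_1 \le 2$ with $\sigma^{2} = 1$ that makes this work; this also parallels, at the level of units, the cited fact that each $P^k(G)$ lies in a single coset of $G'$.
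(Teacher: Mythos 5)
Your proof is correct, and its skeleton coincides with the paper's: both arguments reduce the statement to showing that $\Pi$ maps each unit of $\mathcal{U}_1$ into a single coset of the commutator subloop $G'$, and then both use the bijection $\Pi\colon V_j^* \to G$ together with Proposition~\ref{Vinunits} and Corollary~\ref{unitsizes} to get $r \le |G'|$ (this last step is exactly what the paper means by ``acting similar to the proof of Corollary~\ref{U1size}''). Where you genuinely diverge is in how the coset containment is established. The paper notes that all $\Pi$-values of a fixed unit lie in a common set $P^k(G)$ and then quotes the result of~\cite{pula.prodquasi} that, for loops, each $P^k(G)$ is contained in a single coset of $G'$; you instead prove the containment from scratch by passing to the abelian group $G/G'$, where bracketing and order disappear, so that along any $U$-diagonal of type $V$ in $G[d]$ one has $\overline{\Pi(V)} = \overline{\Pi(U)}\,\sigma^d$, and then combining $\sigma^2 = 1$ with the period bound $\tau_1 \le 2$ of Lemma~\ref{transclass} (including the correct separate treatment of the primitive case $\tau_1 = 1$, where diagonals of both parities force $\sigma = 1$). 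In effect you reprove, in exactly the case needed, the cited property of $P^k(G)$: the image in $G/G'$ of any product containing every element of $G$ precisely $k$ times is $\sigma^k$, so $P^k(G)$ lies in one coset. The paper's route is shorter given the citation and stays inside the $P^k$/$P^\infty$ machinery already set up for Corollary~\ref{U1size}; yours is self-contained, elementary, and makes explicit the parity bookkeeping the paper leaves implicit. One harmless imprecision: your $\sigma = \prod_{g\in G}\overline{g}$ is not literally ``the product of all elements of $G/G'$'' but the $|G'|$-th power of that product; since the multiset $\{\overline{g}: g \in G\}$ is inverse-closed, the identity $\sigma^2 = 1$ that you actually use still holds.
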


\begin{proof}
By definitions, there is $k \in \mathbb{N}$ such that for every tuple $V$ from a unit $\mathcal{Y}$ of the class $\mathcal{U}_1$ it holds $\Pi(V) \in P^k(G)$. Since $P^k(G)$ is contained in a single coset of $G'$, we have $|P^k(G)| \leq |G'|$. Acting similar to the proof of Corollary~\ref{U1size}, we obtain the required statement.
\end{proof}

We don't know how to efficiently find  equivalence classes and units and estimate their sizes for a general binary quasigroup $G$. It seems probable that in this case they cannot be expressed in terms of some algebraic invariant and mostly depend on the structure of the Cayley table of the quasigroup. To illustrate this, consider the following example.
% It is possible that for a general quasigroup $G$ the structure and sizes of equivalence classes and units are mostly controlled not by the algebraic properties of $G$ but by the block structure of its Cayley table. 

\textbf{Example 3.} Let $(G,*)$ be a binary quasigroup of order $n = 2k$ with the Cayley table of the form
$$
\begin{array}{cc}
A' & B' \\ B'' & A''
\end{array}
$$ 
where $A', A''$ are arbitrary latin squares of order $k$ under the symbol set $S_1 = \{ 1, \ldots, k \}$ and $B', B''$ are latin squares under the symbol set $S_2 = \{ k+1, \ldots, 2k \}$.
By the definitions, every permutation $W \in \mathcal{W}$ contains the same number of entries from $S_1$ and $S_2$. Moreover, for all $a,b \in \mathcal{I}_n$ we have $a * b \in S_2$ if $a$ and $b$ belong to different symbol sets, and $a * b \in S_1$ otherwise.

We will say that a tuple $V \in \mathcal{I}_n^n$ is even if $V$ contains an even number of symbols from each of the sets $S_1$ and $S_2$, and that $V$ is odd otherwise. It is easy to check that if $k$ is even ($n \equiv 0 \mod 4$), then for every permutation $W \in \mathcal{W}$ the tuple $V * W$ has same parity as $V$, and if $k$ is odd ($n \equiv 2 \mod 4$), then $V * W$ has the different parity.

Therefore, for $n \equiv 0 \mod 4$ the quasigroup $G$ has at least two different equivalence classes (composed of odd or even tuples), and for $n \equiv 2 \mod 4$ each (possibly unique) equivalence class of $G$ contains at least two units.

\subsection{Diagonals in iterated groups}

Thanks to the association property, for a given group $G$ we can completely describe the structure of equivalence classes and their units.
In particular,  we will see that the variety of diagonals in an iterated group depends on the fulfillment of the Hall--Paige condition.
We also refine the statement of Theorem~\ref{quasigeneral} for groups and prove that for large $d$ the number of diagonals in $d$-iterated groups $G[d]$ is given by the size of the commutator $G'$.

Recall that $G$ is a \textit{Hall--Paige group} if all its Sylow $2$-subgroups are trivial or non-cyclic ($G$ satisfies the condition of Theorem~\ref{HPconj}).  

Given a group $G$, the \textit{commutator} subgroup $G'$ is  the smallest normal subgroup of $G$ such that $G / G'$ is an abelian group. Equivalently, $G'$ is  a subgroup generated by commutators $ghg^{-1}h^{-1}$, $g,h \in G$. The group $H = G / G'$ is known as the abelianization  of $G$. Since $G'$ is a normal subgroup, the group $G$ is partitioned into cosets $hG'$, $h \in H$. 

The following result of D\'enes and Hermann~\cite{DenHer.grprod} connects the commutator subgroup $G'$ with the set $P(G)$. Recall that here $P(G)$ denotes the set of products of all elements of $G$: $P(G) = \{ \Pi(W) | W \in \mathcal{W} \}$.

\begin{teorema}[\cite{DenHer.grprod}] \label{DHth}
Let $G$ be a group. Then either $P(G) = G'$ or $P(G) = gG'$, where $g$ is the unique element of order $2$ in a nontrivial, cyclic Sylow $2$-subgroup of $G$.
\end{teorema}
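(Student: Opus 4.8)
The plan is to prove this statement (Theorem~\ref{DHth}, due to D\'enes and Hermann) in three stages: first show that all products $\Pi(W)$, $W \in \mathcal{W}$, lie in a single coset of $G'$; then show they \emph{fill} that coset; and finally identify which coset it is in terms of the Sylow $2$-structure of $G$. Throughout I write $H = G/G'$ for the abelianization and use that, since $G$ is a group, the bracketing in $\Pi(W)$ is immaterial, so $\Pi(W) = w_1 w_2 \cdots w_n$ for $W = (w_1, \ldots, w_n)$.

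For the first stage, the engine is the adjacent transposition. If $W'$ is obtained from $W$ by swapping positions $i$ and $i+1$, then writing $p = w_1 \cdots w_{i-1}$ and $s = w_{i+2} \cdots w_n$ one computes $\Pi(W)^{-1}\Pi(W') = s^{-1}[w_{i+1},w_i]s = [w_{i+1},w_i]^{s} \in G'$. Since adjacent transpositions generate the full symmetric group on the $n$ positions, every two orderings give products in the same coset, so $P(G) \subseteq cG'$ for a fixed coset, where $c \in H$ is the common image. Projecting the product of all elements of $G$ to the abelian group $H$, in which each coset of $G'$ contributes $|G'|$ identical factors, gives the explicit value $c = \left(\prod_{h \in H} h\right)^{|G'|}$ in $H$.

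For the second stage, \emph{fullness}, I would show $P(G) = cG'$ by proving that $P(G)$ is stable under right multiplication by $G'$. The key observation is that the swap identity in the special case $i = n-1$ (so $s = e$) multiplies the product on the right by exactly the commutator $[w_n, w_{n-1}]$: an ordering ending in $(\ldots, y, x)$ with product $a = myx$ and the one ending in $(\ldots, x, y)$ with the same prefix satisfy $mxy = a\,[x,y]$. Hence from a suitable base ordering one can realize right multiplication by \emph{any single} commutator, and since every commutator arises this way and commutators generate $G'$, the reachable ``one--step jumps'' exhaust $G'$. The plan is to chain such jumps to reach an arbitrary element $cz$ of the coset, writing $z \in G'$ as a product of commutators and realizing them successively. \textbf{This chaining is the main obstacle:} realizing $[x,y]$ with trivial conjugation forces $x$ and $y$ into the last two positions, which disturbs the configuration needed for the next commutator, so successive jumps interfere and the suffix $s$ reintroduces conjugations that must be absorbed. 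Resolving this cleanly requires a careful combinatorial induction (for instance on $|G|$, peeling off the last two elements and controlling the set of achievable prefix products), and this is precisely the delicate heart of the D\'enes--Hermann argument.

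For the third stage I identify $c$ and connect it to the Sylow $2$-subgroup of $G$. By the classical product formula for finite abelian groups, $P := \prod_{h \in H} h$ equals the product of all involutions of $H$, which is the unique involution $t$ of $H$ when the Sylow $2$-subgroup of $H$ is nontrivial cyclic and is $e$ otherwise; thus $c = t^{|G'|}$, and $c \neq e$ exactly when $t \neq e$ and $|G'|$ is odd. If $|G|$ is odd then $H$ has odd order and $c = e$. If the Sylow $2$-subgroup $S$ of $G$ is nontrivial cyclic, then by the standard fact that a group with cyclic Sylow $2$-subgroup is $2$-nilpotent, $G = S \ltimes N$ with $|N|$ odd and $G' \leq N$; hence $|G'|$ is odd, $S \cap G' = \{e\}$, and $S$ maps isomorphically onto the (nontrivial cyclic) Sylow $2$-subgroup of $H$, so $t \neq e$ is the image of the unique involution $g \in S$ and $c = gG'$, giving $P(G) = gG'$. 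Finally, if the Sylow $2$-subgroup of $G$ is noncyclic, then either $|G'|$ is even, forcing $c = t^{|G'|} = e$, or $|G'|$ is odd, in which case $H$ has noncyclic Sylow $2$-subgroup, hence at least three involutions and $t = e$; either way $c = e$ and $P(G) = G'$. Combining the three stages yields exactly the stated dichotomy.
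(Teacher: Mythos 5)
You should note first that the paper does not prove Theorem~\ref{DHth} at all: it is stated as an imported result from~\cite{DenHer.grprod}, so the only question is whether your blind argument is a complete proof, and it is not. Stages 1 and 3 of your plan are correct and constitute the routine half of the theorem: the adjacent-swap computation $\Pi(W)^{-1}\Pi(W') = [w_{i+1},w_i]^{s} \in G'$ shows that $P(G)$ lies in a single coset $cG'$; and your identification of $c$ --- via the projection $c = \left(\prod_{h\in H} h\right)^{|G'|}$, the classical product formula for finite abelian groups, and Burnside's normal $2$-complement theorem in the case of a nontrivial cyclic Sylow $2$-subgroup $S$ (which gives $|G'|$ odd, $S \cap G' = \{e\}$, and $c$ equal to the image of the unique involution $g \in S$) --- is sound. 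This establishes the containment $P(G) \subseteq G'$ or $P(G) \subseteq gG'$ with $g$ as in the statement.

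The genuine gap is exactly where you flag it yourself: Stage 2, the reverse inclusion $cG' \subseteq P(G)$. What you offer there is not a proof but an accurate description of why the natural attack fails: a swap realizes right multiplication by the bare commutator $[x,y]$ only when $x,y$ occupy the last two positions (otherwise the commutator is conjugated by the suffix product), and rearranging the sequence to set up the next commutator perturbs the running product by further uncontrolled conjugated commutators, so the one-step jumps cannot simply be chained. You then defer to ``a careful combinatorial induction'' without supplying it. That missing induction is not a detail --- it is the theorem: the containment you did prove is the easy half, while fullness of the coset is the actual content of D\'enes and Hermann's paper and requires a substantive structural argument that no formal manipulation of adjacent transpositions is known to replace. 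The gap also matters for this paper: Lemma~\ref{U1descr} uses the equality $P(G) = G'$ (not merely the containment) to produce $|G'|$ distinct tuples $\textbf{e}_1(b)$ in the class $\mathcal{U}_1$, which yields the lower bound $|G'|n^{n-1}$ on unit sizes and hence the constants in Theorems~\ref{grouptransintro} and~\ref{groupgeneral}. As it stands, your proposal proves only the containment half of the dichotomy.
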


For our purposes, we need several approaches to the definition of Hall--Paige groups. Some requirements, which are equivalent to the Paige--Hall condition, are presented in the following table.

\begin{center}
\begin{tabular}{|c|l|l|}
\hline
~ & $G$ is a Hall--Paige group  & $G$ is a non-Hall--Paige group \\
\hline  \hline
(1) & all Sylow $2$-subgroups of $G$ & there exists a nontrivial   \\
~  &  are trivial or non-cyclic &  cyclic  Sylow $2$-subgroup in $G$ \\
\hline
(2) & the Cayley table of $G$ & the Cayley table of $G$ \\
~ & has a transversal & has no transversals \\
\hline
(3) & $P(G) = G'$ & $P(G) = gG'$  \\
~ & ~ &  for some $g \notin G'$, $g^2 \in G'$ \\
\hline
(4) & all $d$-iterated groups $G[d]$  &  $d$-iterated groups $G[d]$ have   \\
~ & have transversals &  transversals only if $d$ is even \\
\hline
\end{tabular}
\end{center}

\begin{teorema}
Conditions $(1)$--$(4)$ are equivalent.  Every condition can be taken as a definition of a Hall--Paige group (a non-Hall--Paige group).
\end{teorema}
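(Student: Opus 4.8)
The plan is to exploit that condition $(1)$ already partitions \emph{all} finite groups into the Hall--Paige and non-Hall--Paige cases. Consequently, to prove all the ``if and only if''s it suffices to verify, for each side separately, that the Hall--Paige form of one condition implies the Hall--Paige form of each other (and likewise on the non-Hall--Paige side); exhaustiveness of the two cases then closes every equivalence, and the concluding remark that any condition may serve as a definition is immediate. I would take $(1)\Leftrightarrow(2)$ for free, as it is exactly Theorem~\ref{HPconj}. For $(1)\Leftrightarrow(3)$ I would invoke D\'enes--Hermann (Theorem~\ref{DHth}): if $G$ is Hall--Paige there is no nontrivial cyclic Sylow $2$-subgroup, the element $g$ of Theorem~\ref{DHth} does not exist, and the only surviving option is $P(G)=G'$; if $G$ is non-Hall--Paige, a cyclic Sylow $2$-subgroup exists, so by the normal $2$-complement theorem for groups with cyclic Sylow $2$-subgroup the index $|G:G'|$ is even and the unique involution $g$ lies outside $G'$, giving $g\notin G'$, $g^2\in G'$, and $P(G)=gG'\neq G'$.

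The crux is $(3)\Leftrightarrow(4)$, and here the iterated-group machinery enters through an abelianization invariant. Passing to the abelian quotient $H=G/G'$, I would read the defining per-coordinate equalities of a $U$-diagonal of type $V$ in $G[d]$, namely $v_j=(\cdots(u_j*w_{1,j})*\cdots)*w_{d,j}$, inside $H$; commutativity collapses them, after multiplying over all coordinates $j$, to
$$\overline{\Pi(V)}=\overline{\Pi(U)}\cdot\rho^{\,d},$$
where $\rho\in H$ is the common image of the single coset $P(G)$ (each permutation $W$ satisfies $\Pi(W)\in P(G)$). By Theorem~\ref{DHth}, $\rho=\bar e$ in the Hall--Paige case and $\rho$ has order $2$ otherwise. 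Since a transversal of $G[d]$ is a $\mathbb{W}$-diagonal whose type is a permutation, and both $\mathbb{W}$ and its target project to $\rho$, the invariant forces $\rho^{\,d}=\bar e$; hence in the non-Hall--Paige case $G[d]$ has \emph{no} transversal for odd $d$. This settles the non-existence half of $(4)$.

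For the existence half I would argue constructively. Every even-dimensional iterated quasigroup has a transversal by \cite[Theorem~26]{my.obzor}, which disposes of all even $d$ in both cases. For a Hall--Paige group and odd $d$, I would build a transversal of $G[d]$ by composition via Lemma~\ref{diagiter}: the genuine transversal of $G=G[1]$ provided by $(1)\Leftrightarrow(2)$ is a $\mathbb{W}$-diagonal of type some permutation $W_0$; applying to a transversal of the even iterate $G[d-1]$ the coordinate permutation sending $\mathbb{W}$ to $W_0$ and invoking Proposition~\ref{permclosed} yields a $W_0$-diagonal of type a permutation in $G[d-1]$; concatenating the two diagonals through Lemma~\ref{diagiter} produces a $\mathbb{W}$-diagonal of type a permutation in $G[d]$, i.e.\ a transversal. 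Thus the Hall--Paige side of $(4)$ holds for every $d$, and since each side of $(4)$ follows from the corresponding side of $(3)$ while the two sides are exhaustive, $(3)\Leftrightarrow(4)$ follows; together with $(1)\Leftrightarrow(2)\Leftrightarrow(3)$ the whole chain closes.

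The step I expect to be the main obstacle is precisely this existence half for \emph{odd} $d$ in the Hall--Paige case: the Perron--Frobenius limits only guarantee transversals for large $d$, and the abelianization invariant merely removes the parity obstruction without exhibiting a diagonal, so the composition argument (a true transversal of $G$, followed by an even iterate, glued after relabeling coordinates through Proposition~\ref{permclosed}) is indispensable and must be arranged so that the relabeled target stays a permutation. A secondary delicate point is the non-Hall--Paige claim $g\notin G'$, for which the $2$-nilpotency of groups with cyclic Sylow $2$-subgroup is the clean group-theoretic input.
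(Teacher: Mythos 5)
Your proof is essentially correct, but it is considerably more self-contained than the paper's, whose proof of this theorem is almost entirely a pointer to other results: there, $(1)\Leftrightarrow(2)$ is the proved Hall--Paige conjecture (Theorem~\ref{HPconj}); $(1)\Leftrightarrow(3)$ is discharged by citing~\cite{DenKeed.gradm} (the Hall--Paige condition is equivalent to $e\in P(G)$, which together with Theorem~\ref{DHth} pins down the coset $P(G)$); and $(3)\Leftrightarrow(4)$ is deferred wholesale to Theorem~\ref{grouptransintro}, i.e.\ to the transition-matrix machinery (Lemmas~\ref{U1descr} and~\ref{classdescr}, Theorem~\ref{groupgeneral}) together with the composition trick of Lemma~\ref{transclass}. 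You replace both citations by arguments. For $(1)\Leftrightarrow(3)$ you use Burnside's normal $2$-complement theorem to place the involution $g$ outside $G'$; for $(3)\Rightarrow(4)$ your abelianization invariant $\overline{\Pi(V)}=\overline{\Pi(U)}\cdot\rho^d$ is exactly the coset bookkeeping that the paper encodes in the two units of $\mathcal{U}_1$ (Lemma~\ref{U1descr}), but derived directly, with no Perron--Frobenius input; and your existence argument (a transversal of $G[1]$ glued by Lemma~\ref{diagiter} to a coordinate-relabelled transversal of the even iterate $G[d-1]$) parallels the paper's own, which instead composes repeatedly with the $W$-diagonal of type $W$ in $G[2]$ supplied by Lemma~\ref{transclass} rather than citing~\cite{my.obzor}. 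What your route buys is that the equivalence theorem no longer rests on the asymptotic machinery of Section~3 at all; what it costs is that the group-theoretic input (Burnside) must be supplied by hand. The overall two-sided, exhaustive-and-exclusive case skeleton is the same in both treatments.

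One step needs patching. In the non-Hall--Paige direction of $(1)\Rightarrow(3)$ you conclude $P(G)=gG'$ from $g\notin G'$; but Theorem~\ref{DHth}, as stated, is only the disjunction ``$P(G)=G'$ or $P(G)=gG'$'', and $g\notin G'$ shows the two alternatives are \emph{distinct}, not which one holds. The fix uses exactly the structure you already invoked: with $N$ the normal $2$-complement one has $G'\leq N$, and every product of all elements of $G$ has image in $G/N\cong\mathbb{Z}_{2^k}$ equal to $|N|$ times the sum of all residues, i.e.\ an odd multiple of $2^{k-1}$, hence nonzero; so $P(G)\cap G'\subseteq P(G)\cap N=\emptyset$, ruling out $P(G)=G'$ and forcing the second alternative. (The paper sidesteps this point by citing the $e\in P(G)$ criterion of~\cite{DenKeed.gradm}.)
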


\begin{proof}

(1) $\Leftrightarrow$ (2): It is the Hall--Paige conjecture  (proved in~\cite{wilcox.HPred} and~\cite{evans.admsporgr}).

(3): This alternative is Theorem~\ref{DHth}, proved in \cite{DenHer.grprod}.

(1) $\Leftrightarrow$ (3): By~\cite{DenKeed.gradm}, the Hall--Paige condition (1)  is equivalent to $e \in P(G)$. It is possible only when $P(G) = G'$. A short proof of (1) $\Leftrightarrow$ (3) can be also found in~\cite{LeeWan.LShallpaige}.

(3) $\Leftrightarrow$ (4): It will be proved in Theorem~\ref{grouptransintro}.

\end{proof}

In order to describe equivalence classes and units for iterated groups,  we need the following refinement of Proposition~\ref{toconst}, controlling the resulting tuples.

\begin{utv} \label{toconstgr}
Let $G$ be a group of order $n$ and let $e$ be the identity element of $G$. Then for each tuple $V \in \mathcal{I}_n^n$, $V = (v_1, \ldots, v_n)$, there is $l \in \mathbb{N}$, $l \leq n$ for which  there exists a $V$-diagonal of type  $\textbf{e}_1(\Pi(V))$ in the iterated group $G[2l]$.
\end{utv}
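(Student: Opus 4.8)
The plan is to exploit associativity to turn the whole problem into a statement about coordinatewise products in $G$, and then to rewrite $V$ into the target tuple one coordinate at a time, spending a single copy of $G[2]$ per coordinate.

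First I would record the following consequence of associativity: for any permutations $W_1, \ldots, W_{2l} \in \mathcal{W}$, the $j$-th coordinate of $G[2l](V, W_1, \ldots, W_{2l})$ equals $v_j \cdot (W_1)_j \cdots (W_{2l})_j$, i.e.\ $v_j$ multiplied by the product $p_j := (W_1)_j \cdots (W_{2l})_j$ of the $j$-th coordinates of the $W_s$. Hence producing a $V$-diagonal of type $\textbf{e}_1(\Pi(V))$ amounts to choosing permutations whose coordinatewise products satisfy $p_1 = v_2 \cdots v_n$ and $p_j = v_j^{-1}$ for every $j \geq 2$, since then position $1$ becomes $v_1 p_1 = \Pi(V)$ and every other position becomes $v_j p_j = e$.

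The key step is an \emph{absorption} claim: for the fixed target position $1$, any other position $j$, and any $g \in G$ with $g \neq e$, there is a pair of permutations $(W, W')$ with $w_1 w'_1 = g$, $w_j w'_j = g^{-1}$, and $w_k w'_k = e$ for every $k \neq 1, j$. To build it I would set $w'_k := w_k^{-1}$ for $k \neq 1,j$, together with $w'_1 := w_1^{-1} g$ and $w'_j := w_j^{-1} g^{-1}$, so that $W'$ differs from the inverse permutation $(w_1^{-1}, \ldots, w_n^{-1})$ only in positions $1$ and $j$. The one thing to verify is that $W'$ is again a permutation, and since inversion is a bijection this holds precisely when the two altered entries $\{w_1^{-1} g,\ w_j^{-1} g^{-1}\}$ refill the two vacated values $\{w_1^{-1},\ w_j^{-1}\}$. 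Choosing $w_1$ arbitrarily and $w_j := g^{-1} w_1$ (which is distinct from $w_1$ exactly because $g \neq e$) yields $w_1^{-1} g = w_j^{-1}$ and $w_j^{-1} g^{-1} = w_1^{-1}$, so the swap closes up and both $W$ and $W'$ are permutations. This is the only place where the group axioms (inverses and associativity) are genuinely used, and it is the step I expect to be the main obstacle to state and verify cleanly.

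With the absorption claim available, I would apply it iteratively with target position $1$ and $j$ running through $2, 3, \ldots, n$. At the step for $j$ the current value at position $j$ is still the original $v_j$ (coordinates $\geq j$ have not yet been touched), so absorbing it right-multiplies position $1$ by $v_j$ and sets position $j$ to $e$; positions where $v_j = e$ change nothing and need no step. After processing all of $2, \ldots, n$, position $1$ has accumulated $v_1 v_2 \cdots v_n = \Pi(V)$ and every other position equals $e$, which is exactly $\textbf{e}_1(\Pi(V))$. Each absorption is a $G[2]$-diagonal, so chaining them through Lemma~\ref{diagiter} produces a $V$-diagonal of type $\textbf{e}_1(\Pi(V))$ in $G[2l]$ with $l = |\{ j \geq 2 : v_j \neq e\}| \leq n-1 \leq n$. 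In the degenerate case where $V$ already equals $\textbf{e}_1(\Pi(V))$ one may instead insert a single trivial double step $W' = W^{-1}$ to realise the same tuple in $G[2]$, so the bound $l \leq n$ is met in every case.
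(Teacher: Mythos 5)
Your proof is correct and follows essentially the same route as the paper's: both construct, for each non-identity coordinate, a $G[2]$-diagonal built from mutually inverse pairs $(w,w^{-1})$ with two special positions that transfer one group element while fixing everything else, and both chain these steps via Lemma~\ref{diagiter} to get $l \leq n$. The only difference is cosmetic: the paper absorbs the last non-identity entry $v_k$ into its left neighbour $v_{k-1}$ (working right to left by induction), whereas you absorb each $v_j$ directly into position $1$ (sweeping left to right); both orderings respect the non-commutative product, so both accumulate exactly $\Pi(V)$.
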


\begin{proof}
Suppose that the last $n-k$ positions of $V$ are equal to $e$ and the last element of $V$ that is distinct from $e$ is located in the $k$-th position.
The proof goes by the induction on $k$. For the base of induction ($k =0$ and $k=1$) the statement is true with $l=0$. 

Assume that $k \geq 2$. 
Consider permutations $W, W' \in \mathcal{W}$, $W = (w_1, \ldots, w_n)$, $W' = (w'_1, \ldots, w'_n)$, for which the following equalities hold
\begin{gather*} 
v_1 * w_1 * w'_1 = v_1; \\
\vdots \\
v_{k-2} * w_{k-2} * w'_{k-2} = v_{k-2}; \\
v_{k-1} * w_{k-1} * w'_{k-1} = v_{k-1} * v_{k}; \\
v_{k} * w_{k} * w'_{k} = e; \\
e * w_{k+1} * w'_{k+1} = e; \\
\vdots \\
e * w_{n} * w'_{n} = e. 
\end{gather*} 
These equalities mean that $V*W *W' = V'$ for some tuple $V'$, whose first $k - 2$ positions are the same as in tuple $V$, $\Pi(V') = \Pi(V)$ and  the last  $n - k+1$ elements of $V'$ are equal to $e$. 

Let us show that the required permutations $W$ and $W'$ exist.  Let $g$ be an arbitrary element of $G$. Put 
$$w_k = g; ~~~~ w'_{k} = g^{-1} * v_{k}^{-1};  ~~~~ w_{k-1} = v_{k} * g; ~~~~ w'_{k-1} = g^{-1}. $$
 Note that the condition $v_k \neq e$ implies that $w_{k-1} \neq w_k$ and $w'_{k-1} \neq w'_k$. It is easy to see that $(k-1)$-th and $k$-th required equalities are satisfied by these choices.

Let the other $n-2$ pairs of elements $(w_1, w'_1)$, \ldots, $(w_{k-2}, w'_{k-2})$, $(w_{k+1}, w'_{k+1})$, \ldots, $(w_n, w'_n)$ be all the pairs $(w,w^{-1})$ of mutually inverse elements of the group $G$, except for the pairs $(g, g^{-1})$ and $(v_k *g, g^{-1} * v_k^{-1})$, whose elements have been already used in $W$ and $W'$.

Thus, we construct a $V$-diagonal of type $V'$ in the iterated group $G[2]$. By the inductive assumption, there exists a $V'$-diagonal of type $\textbf{e}_1(\Pi(V'))$ in the iterated group $G[2l-2]$ for some $l$. Since $\Pi(V') = \Pi(V)$,  Lemma~\ref{diagiter} implies that there is a $V$-diagonal of type $\textbf{e}_1(\Pi(V))$ in $G[2l]$.
\end{proof}

\begin{lemma} \label{subgroupU1}
Let $G$ be a group of order $n$. Then the equivalence class $\mathcal{U}_1$ is a subgroup in the $n$-th Cartesian power $G^n$. Moreover, each equivalence class $\mathcal{U}_i$, $i = 1, \ldots, m$, is a left coset of $\mathcal{U}_1$ in $G^n$.
\end{lemma}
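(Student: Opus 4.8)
The plan is to exploit associativity to reduce everything to coset structure in the direct power $G^n$. First I would observe that, since $G$ is a group, the entrywise relation defining a $U$-diagonal collapses: a collection $(W_1, \ldots, W_d)$ of permutations is a $U$-diagonal of type $V$ in $G[d]$ exactly when, in each coordinate $j$, $u_j w_{1,j} \cdots w_{d,j} = v_j$; by associativity this reads $V = U \cdot Z$, where the product is taken componentwise in $G^n$ and $Z = W_1 \cdots W_d$ is the componentwise product of the permutation-tuples $W_1, \ldots, W_d$. Thus $U$ and $V$ lie in the same equivalence class if and only if $U^{-1}V$ belongs to the set
$$ \mathcal{Z} = \{ W_1 \cdots W_d : d \geq 0, \ W_1, \ldots, W_d \in \mathcal{W} \} \subseteq G^n, $$
the set of all tuples reachable from the constant tuple $\textbf{e}$.

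Next I would identify $\mathcal{U}_1$ with $\mathcal{Z}$ and show $\mathcal{Z}$ is a subgroup. By Lemma~\ref{transclass} the constant tuple $\textbf{e}$ lies in $\mathcal{U}_1$, so $\mathcal{U}_1$ is precisely the equivalence class of $\textbf{e}$, which by the previous paragraph equals $\textbf{e}\cdot\mathcal{Z} = \mathcal{Z}$. The set $\mathcal{Z}$ contains the identity of $G^n$ (take $d = 0$), and it is closed under the componentwise product, since concatenating the two defining factorizations of $Z$ and $Z'$ exhibits $ZZ'$ as a product of permutation-tuples. As $G^n$ is finite, a nonempty subset closed under multiplication is automatically a subgroup; alternatively, closure under inversion follows directly because $Z = W_1 \cdots W_d$ gives $Z^{-1} = W_d^{-1} \cdots W_1^{-1}$, and the componentwise inverse $W_i^{-1}$ of a permutation-tuple is again a permutation-tuple (the map $x \mapsto x^{-1}$ permutes $G$). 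Hence $\mathcal{U}_1 = \mathcal{Z}$ is a subgroup of $G^n$.

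Finally, for an arbitrary class $\mathcal{U}_i$, I would pick any representative $U \in \mathcal{U}_i$. By the first paragraph, $V \in \mathcal{U}_i$ if and only if $U^{-1}V \in \mathcal{Z} = \mathcal{U}_1$, that is, if and only if $V \in U\,\mathcal{U}_1$; therefore $\mathcal{U}_i = U\,\mathcal{U}_1$ is a left coset of $\mathcal{U}_1$. The only point that genuinely requires care—and where associativity is indispensable—is the factorization $V = U\cdot Z$ in the first step: for a general quasigroup the left-bracketed product $U * W_1 * \cdots * W_d$ cannot be split off as $U$ times a single difference tuple, which is exactly why the subgroup and coset description is special to groups. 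Verifying symmetry of the reachability relation (so that classes coincide with full cosets rather than one-sided reachable sets) reduces to the closure of $\mathcal{Z}$ under inversion noted above.
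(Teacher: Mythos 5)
Your proposal is correct and follows essentially the same route as the paper's proof: both use associativity to identify $\mathcal{U}_1$ with the set of componentwise products of permutation-tuples, verify closure under concatenation and under componentwise inversion (noting that $W \mapsto W^{-1}$ sends permutation-tuples to permutation-tuples), and then read off each $\mathcal{U}_i$ as a left coset $U\,\mathcal{U}_1$. Your explicit handling of the symmetry of the reachability relation via closure under inversion is a nice touch the paper leaves implicit (it relies instead on the irreducibility of the blocks in Theorem~\ref{PFform}), but it does not change the substance of the argument.
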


\begin{proof}
By Lemma~\ref{transclass}, the identity element $\textbf{e}$ of the group $G^n$ belongs to the class $\mathcal{U}_1$. By the definitions, every tuple $V \in \mathcal{U}_1$  is a product of some permutations $W \in \mathcal{W}$. Thanks to associativity, if  tuples $U$ and $V$ belong to $\mathcal{U}_1$, then $U * V \in \mathcal{U}_1$. At last, for every $V \in \mathcal{U}_1$, $V = W_1 * \cdots * W_k$ its inverse $V^{-1} = W_k^{-1} * \cdots * W_1^{-1}$ also belongs to the class $\mathcal{U}_1$. Thus $\mathcal{U}_1$ is a subgroup in $G^n$.

By definition of equivalence classes, each $\mathcal{U}_i$ consists of tuples constructed by the means of consecutive right multiplications of a tuple by permutations from $\mathcal{W}$. Thanks to the associativity of the multiplication in the group $G^n$,  the class $\mathcal{U}_i$ is a left coset of $\mathcal{U}_1$.
\end{proof}

\begin{sled}
If $G$ is a group, then each equivalence class $\mathcal{U}_i$ has the same size.
\end{sled}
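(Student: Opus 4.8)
The plan is to read this off directly from Lemma~\ref{subgroupU1}. That lemma asserts two things: first, that $\mathcal{U}_1$ is a subgroup of the Cartesian power $G^n$; and second, that every equivalence class $\mathcal{U}_i$ is a left coset of $\mathcal{U}_1$ in $G^n$. Once both facts are in hand, the corollary reduces to the standard observation that all left cosets of a fixed subgroup have the same cardinality as the subgroup itself.

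Concretely, I would exhibit, for each $i$, a representative $V_i \in G^n$ with $\mathcal{U}_i = V_i * \mathcal{U}_1$ (guaranteed by Lemma~\ref{subgroupU1}), and then note that left multiplication by $V_i$, namely the map $U \mapsto V_i * U$, is a bijection from $\mathcal{U}_1$ onto $\mathcal{U}_i$ with inverse $U \mapsto V_i^{-1} * U$. Since a bijection preserves cardinality, $|\mathcal{U}_i| = |\mathcal{U}_1|$ for every $i$, so all equivalence classes share the common size $|\mathcal{U}_1|$.

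There is essentially no obstacle here: the entire content has already been packaged into Lemma~\ref{subgroupU1}, and what remains is the elementary coset-counting argument (a form of Lagrange's theorem). The only point worth double-checking is that the coset structure is taken with respect to left multiplication rather than right, which matches the convention established in Lemma~\ref{subgroupU1}, where the classes $\mathcal{U}_i$ arise from consecutive \emph{right} multiplications of a tuple by permutations from $\mathcal{W}$ and are shown, via associativity in $G^n$, to be \emph{left} cosets of $\mathcal{U}_1$.
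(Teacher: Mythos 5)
Your proposal is correct and matches the paper's intended reasoning exactly: the corollary is stated without a separate proof precisely because it follows immediately from Lemma~\ref{subgroupU1}, with the size equality being the standard fact that all left cosets of a subgroup of $G^n$ are in bijection with the subgroup via left multiplication. Nothing is missing.
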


We are interested to know if it is true that for any quasigroup $G$ the equivalence class $\mathcal{U}_1$ is a subquasigroup in $G^n$. Note that in a general case there are no equalities between sizes of equivalence classes (see Example 2).

Now we are ready to describe the equivalence class $\mathcal{U}_1$ for iterated groups.  For abelian groups a similar result was established by Hall in~\cite{hall.abelperm}.

\begin{lemma}\label{U1descr}
Let $G$ be a group of order $n$ and $G'$ be the commutator subgroup of $G$. 
\begin{enumerate}
\item Assume that $G$ is a Hall--Paige group. Then the equivalence class $\mathcal{U}_1$ consists of a single unit. A tuple $V$ belongs to $\mathcal{U}_1$ if and only if $\Pi(V) \in  G'$. In particular, $|\mathcal{U}_1| = |G'|n^{n-1}$.
\item Assume that  $G$ is a non-Hall--Paige group and $g$ is the element of order $2$ from a nontrivial, cyclic Sylow $2$-subgroup of $G$. Then the equivalence class $\mathcal{U}_1$ consists of two units $\mathcal{Y}_1$ and $\mathcal{Y}_2$. A tuple $V$ belongs to the unit $\mathcal{Y}_1$ if and only if $\Pi(V) \in gG'$ and $V$ belongs to $\mathcal{Y}_2$ whenever $\Pi(V) \in G'$. The cardinality  of each unit  $\mathcal{Y}_i$ is $|G'| n^{n-1}$.
\end{enumerate}
\end{lemma}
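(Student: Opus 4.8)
The plan is to identify the subgroup $\mathcal{U}_1$ of $G^n$ (Lemma~\ref{subgroupU1}) by means of a single homomorphism, and then pin down exactly which constant-type tuples $\textbf{e}_1(c)$ it contains. Set $H = G/G'$ and define $\bar\Pi : G^n \to H$ by $\bar\Pi(V) = \Pi(V)G'$. Since $H$ is abelian, reordering the factors is harmless and $\bar\Pi$ is a group homomorphism: for $U,V \in G^n$ one has $\overline{\Pi(U*V)} = \prod_i \overline{u_i v_i} = \bar\Pi(U)\,\bar\Pi(V)$. Its kernel $K = \{V : \Pi(V)\in G'\}$ has index $|H|$ in $G^n$, so $|K| = |G'|\,n^{n-1}$. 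By Lemma~\ref{subgroupU1} and the fact that every element of $\mathcal{U}_1$ is a product of permutations, $\mathcal{U}_1 = \langle \mathcal{W}\rangle$. Applying $\bar\Pi$ and Theorem~\ref{DHth} together with the equivalence (1)$\Leftrightarrow$(3): if $G$ is Hall--Paige then $\Pi(W)\in P(G) = G'$, so $\bar\Pi(W) = \bar e$ for all $W \in \mathcal{W}$ and hence $\bar\Pi(\mathcal{U}_1) = \{\bar e\}$; if $G$ is non-Hall--Paige then $\Pi(W)\in gG'$, so $\bar\Pi(\mathcal{U}_1) = \langle \bar g\rangle = \{\bar e, \bar g\}$, where $\bar g$ has order $2$. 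This already yields the upper bounds $\mathcal{U}_1 \subseteq K$ and $\mathcal{U}_1\subseteq \bar\Pi^{-1}\{\bar e,\bar g\}$, respectively.

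For the reverse inclusions I would study the set $N = \{c\in G : \textbf{e}_1(c)\in\mathcal{U}_1\}$; it is a subgroup of $G$ because $\textbf{e}_1(c)*\textbf{e}_1(c') = \textbf{e}_1(cc')$ and $\mathcal{U}_1$ is a subgroup. The crucial point is to prove $G'\subseteq N$. For $i\ne j$ let $E_{ij}(x)$ be the tuple equal to $x$ in position $i$, to $x^{-1}$ in position $j$, and to $e$ elsewhere. Each $E_{ij}(x)$ lies in $\langle\mathcal{W}\rangle$: taking a permutation $W$ and the permutation $W'$ obtained from $W$ by interchanging the values in positions $i$ and $j$, the entrywise product $W*(W')^{-1}$ equals $E_{ij}(w_i w_j^{-1})$, and $w_iw_j^{-1}$ ranges over all of $G$. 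Then, for $n\ge 3$, a direct entrywise computation gives the group commutator $[E_{12}(x),E_{13}(y)] = \textbf{e}_1([x,y])$, the contributions in positions $2$ and $3$ collapsing to $e$. Hence $\textbf{e}_1([x,y])\in\mathcal{U}_1$ for all $x,y$, and since commutators generate $G'$ and $N$ is a subgroup, $G'\subseteq N$ (when $G$ is abelian, $G'=\{e\}$ and this is vacuous). In the non-Hall--Paige case I would additionally note that $\mathbb{W}\in\mathcal{U}_1$ with $\Pi(\mathbb{W})\in gG'$; by Proposition~\ref{toconstgr} the tuple $\textbf{e}_1(\Pi(\mathbb{W}))$ lies in the same class, so $\Pi(\mathbb{W})\in N$, and multiplying by a suitable element of $G'\subseteq N$ gives $g\in N$. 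Thus $N = G'$ (Hall--Paige) and $N = G'\cup gG'$ (non-Hall--Paige). Since Proposition~\ref{toconstgr} places $V$ and $\textbf{e}_1(\Pi(V))$ in one class, $V\in\mathcal{U}_1 \Leftrightarrow \Pi(V)\in N$, which is exactly the stated membership criterion; the cardinalities $|G'|n^{n-1}$ and $2|G'|n^{n-1}$ follow from $|\mathcal{U}_1| = |N|\,n^{n-1}$.

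Finally I would read off the units from the behaviour of $\bar\Pi$ along diagonals. If $V'$ comes from a $V$-diagonal of type $V'$ in $G[d]$, then $V' = V*W_1*\cdots*W_d$ and, since $\bar\Pi$ is a homomorphism,
\[
\bar\Pi(V') = \bar\Pi(V)\,\bar\Pi(W_1)\cdots\bar\Pi(W_d) = \bar\Pi(V)\,\bar g^{\,d}.
\]
In the Hall--Paige case $\bar g = \bar e$, so $\bar\Pi$ is constant on $\mathcal{U}_1$; since $G[1]$ has a transversal (condition (2)), there is a diagonal between permutations with $d=1$, forcing $\tau_1\mid 1$, so $\tau_1 = 1$ and $\mathcal{U}_1$ is a single unit. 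In the non-Hall--Paige case $\bar g^{\,d} = \bar e$ precisely when $d$ is even, so two tuples of $\mathcal{U}_1$ admit an even-length diagonal iff they share the same $\bar\Pi$-value; taking $\textbf{e}\in\bar\Pi^{-1}(\bar e)$ and $\mathbb{W}\in\bar\Pi^{-1}(\bar g)$ shows no even-$d$ diagonal joins the two fibres, so by Lemma~\ref{transclass} the period is exactly $2$ and the two units are $\mathcal{Y}_2 = \{V\in\mathcal{U}_1 : \Pi(V)\in G'\} = K$ and $\mathcal{Y}_1 = \{V\in\mathcal{U}_1 : \Pi(V)\in gG'\}$, each of size $|G'|n^{n-1}$.

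I expect the main obstacle to be the second paragraph: realizing an arbitrary commutator as the single nontrivial coordinate of a product of permutation tuples. The two ingredients that make this work — that a value-transposed permutation produces the elementary tuples $E_{ij}(x)$, and that a three-coordinate group commutator of these cancels outside position $1$ — are exactly what reduces the global combinatorial statement to the algebraic identity that $G'$ is generated by the $[x,y]$; everything else is bookkeeping with the homomorphism $\bar\Pi$ and the reduction supplied by Proposition~\ref{toconstgr}.
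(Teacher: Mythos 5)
Your proof is correct, and although it shares the paper's outer skeleton, its central step runs along a genuinely different route. Common to both: $\mathcal{U}_1$ is the subgroup of $G^n$ generated by $\mathcal{W}$ (Lemma~\ref{subgroupU1}); the forward inclusion $\mathcal{U}_1 \subseteq \{V : \Pi(V) \in G'\}$ (resp.\ $\{V : \Pi(V)\in G'\cup gG'\}$) rests on $P(G)=G'$ (resp.\ $P(G)=gG'$), i.e.\ on Theorem~\ref{DHth} and the Hall--Paige equivalence; and the unit structure is read off from the parity of the number of permutation factors, with a transversal of $G[1]$ forcing period $1$ in the Hall--Paige case. The divergence is in the reverse inclusion. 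The paper proves it by a cardinality sandwich: Proposition~\ref{toconstgr} applied to permutations $W$ puts $\textbf{e}_1(b)$ into $\mathcal{U}_1$ for every $b\in P(G)=G'$, Proposition~\ref{Vinunits} inflates these $|G'|$ canonical tuples to the bound $|\mathcal{U}_1|\ge |G'|n^{n-1}$, and equality of cardinalities with the containing set finishes. You instead establish the set identity directly: $N=\{c : \textbf{e}_1(c)\in\mathcal{U}_1\}$ is a subgroup of $G$, and writing $E_{ij}(x)$ for the tuple with $x$ in position $i$, $x^{-1}$ in position $j$ and $e$ elsewhere, your two computations
$$W*(W')^{-1}=E_{ij}(w_iw_j^{-1}), \qquad \bigl[E_{12}(x),E_{13}(y)\bigr]=\textbf{e}_1([x,y]) \quad (n\ge 3),$$
where $W'$ is $W$ with the $i$-th and $j$-th values swapped, show $G'\subseteq N$ by pure entrywise algebra (and $n\ge 3$ is automatic whenever $G'\neq\{e\}$); Proposition~\ref{toconstgr} then converts this into the criterion $V\in\mathcal{U}_1\Leftrightarrow \Pi(V)\in N$, and $|\mathcal{U}_1|=|N|\,n^{n-1}$ follows by direct count rather than by comparison. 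What each approach buys: yours dispenses with Proposition~\ref{Vinunits} and the sandwich entirely, is constructive, and isolates exactly where the deep inputs are indispensable (the upper bound on $\bar\Pi(\mathcal{U}_1)$, and $g\in N$ in the non-Hall--Paige case), since $G'\subseteq N$ is derived from scratch; the paper's argument is shorter given the machinery it has already built and reuses Proposition~\ref{Vinunits}, which it needs elsewhere anyway. One small point to make explicit in a write-up: concluding $\tau_1=1$ from the $d=1$ diagonal joining two permutations requires that all permutations lie in a single unit, which is Proposition~\ref{permclosed}.
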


\begin{proof}
1. Let $G$ be a Hall--Paige group of order $n$. By condition (2) of the definition of Hall--Paige groups, the Cayley table of $G$ has a transversal. In other words, there exists a $\mathbb{W}$-diagonal of type $W$ for some permutation $W \in \mathcal{W}$ in the iterated group $G[1]$.  So there are two consecutive units in $\mathcal{U}_1$ containing permutations. Meanwhile, Proposition~\ref{permclosed} claims that all permutations are contained in a single unit. Therefore, the equivalence class $U_1$ has a unique unit.

For every tuple $V \in \mathcal{U}_1$ it holds $\Pi(V) \in G'$, because, by Proposition~\ref{U1prod}, $\Pi(V) \in P^\infty(G)$ and the condition (3)  in the definition of Hall--Paige groups gives that $P(G) = G' = P^{\infty} (G)$.

We prove that every tuple $V$ with property $\Pi(V) \in G'$ belongs to the class $\mathcal{U}_1$ by comparing cardinalities of these sets. It is obvious that the number of tuples $V$ for which $\Pi(V) \in G'$ is equal to $|G'| n^{n-1}$.

Let us estimate the size of the class $\mathcal{U}_1$. Consider an arbitrary permutation $W \in \mathcal{W}$ from the equivalence class $\mathcal{U}_1$ (see Lemma~\ref{transclass}). By Proposition~\ref{toconstgr}, the tuple  $\textbf{e}_1(\Pi(W))$ also  belongs to the class $\mathcal{U}_1$. Therefore, for every $b \in P(G)$ we have $\textbf{e}_1(b) \in \mathcal{U}_1$. Since $P(G) = G'$ by Theorem~\ref{DHth},  there are at least $|G'|$ different tuples  of the form $\textbf{e}_1 (b)$ in the class $\mathcal{U}_1$. Finally, Proposition~\ref{Vinunits} implies that the equivalence class $\mathcal{U}_1$ contains at least $|G'| n^{n-1}$ tuples.

2. Assume now that $G$ is a non-Hall--Paige group of order $n$. By condition (3) of the definition of non-Hall--Paige groups,  we have that $P(G) = gG'$ for some $g \in G$ of order $2$. From  the definition of  the equivalence class $\mathcal{U}_1$ it follows that every $V \in \mathcal{U}_1$ can be presented as a product $ W_1 * \cdots * W_k$ for some collection of permutations $W_i \in \mathcal{W}$ and $k \in \mathbb{N}$. If $k$ is even, then $\Pi(V) \in G'$,  and $\Pi(V) \in gG'$ otherwise. Therefore, the equivalence class $\mathcal{U}_1$ has at least two units $\mathcal{Y}_1$ and $\mathcal{Y}_2$ that are contained in sets $\{ V | \Pi(V) \in gG' \}$ and  $\{ V | \Pi(V) \in G' \}$ respectively. But by Lemma~\ref{transclass}, the number of units in $\mathcal{U}_1$ is not greater than $2$.
 
To prove the equalities $\mathcal{Y}_1 = \{ V | \Pi(V) \in gG' \}$ and $\mathcal{Y}_2 = \{ V | \Pi(V) \in G' \}$, we compare their cardinalities by the same way as in the previous clause.

\end{proof}

For groups, we are able to describe not only tuples in the equivalence class $\mathcal{U}_1$ but all other equivalence classes and their units.

\begin{lemma} \label{classdescr}
Let $G$ be a group of order $n$ and $G'$ be the commutator subgroup of $G$. 
\begin{enumerate}
\item If $G$ is a Hall--Paige group, then each equivalence class $\mathcal{U}_i$ contains a single unit and there are some $h_i \in G$ such that $\mathcal{U}_i = \{V | \Pi(V) \in h_iG' \}$.
\item Assume that $G$ is a non-Hall--Paige group and $g$ is the element of order $2$ from a nontrivial, cyclic Sylow $2$-subgroup of $G$. Then each equivalence class $\mathcal{U}_i$ consists of two units $\mathcal{Y}^i_1$ and $\mathcal{Y}^i_2$ and there are some $h_i \in G$ such that $\mathcal{Y}_1^i = \{V | \Pi(V) \in h_igG' \}$ and $\mathcal{Y}_2^i = \{V | \Pi(V) \in h_iG' \}$.
\end{enumerate}
\end{lemma}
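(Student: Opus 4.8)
The plan is to leverage the group structure established in Lemma~\ref{subgroupU1}, which tells us that every equivalence class $\mathcal{U}_i$ is a left coset of the subgroup $\mathcal{U}_1 \leq G^n$. Since $\mathcal{U}_1$ has already been completely characterized in Lemma~\ref{U1descr} in terms of the map $\Pi$, the strategy is to transport that description across cosets. First I would fix, for each class $\mathcal{U}_i$, a representative tuple $A^i \in \mathcal{U}_i$, so that $\mathcal{U}_i = A^i * \mathcal{U}_1$ (entrywise group multiplication in $G^n$). The central observation I would then verify is how $\Pi$ interacts with this multiplication: for tuples $A, B \in G^n$ one does \emph{not} have $\Pi(A * B) = \Pi(A)\Pi(B)$ in general (the group need not be abelian), but one does have $\Pi(A * B) \equiv \Pi(A)\Pi(B) \pmod{G'}$, since the abelianization $G/G'$ is abelian and $\Pi$ becomes a genuine homomorphism-like product there. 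Setting $h_i := \Pi(A^i) \bmod G'$, this congruence immediately gives the coset description.

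\textbf{Key steps, in order.} I would proceed as follows. (i) Record that $\Pi$ descends to $G/G'$: for any $A,B \in G^n$, the residue $\Pi(A*B)G'$ depends only on $\Pi(A)G'$ and $\Pi(B)G'$ and equals their product, because reordering and rebracketing factors only changes the product by a commutator, hence fixes its $G'$-coset. (ii) In the Hall--Paige case, Lemma~\ref{U1descr}(1) says $\mathcal{U}_1 = \{V : \Pi(V) \in G'\}$ and consists of a single unit. Writing $\mathcal{U}_i = A^i * \mathcal{U}_1$ and applying step (i), a tuple $V = A^i * Z$ with $Z \in \mathcal{U}_1$ satisfies $\Pi(V) \in \Pi(A^i) G' = h_i G'$; conversely any $V$ with $\Pi(V) \in h_i G'$ can be written as $A^i * Z$ with $\Pi(Z) \in G'$, so $Z \in \mathcal{U}_1$ and $V \in \mathcal{U}_i$. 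Since $\mathcal{U}_1$ is one unit and left multiplication by a fixed tuple is a bijection preserving the diagonal relation, $\mathcal{U}_i$ is also a single unit. (iii) In the non-Hall--Paige case, Lemma~\ref{U1descr}(2) splits $\mathcal{U}_1$ into the two units $\mathcal{Y}_1 = \{V : \Pi(V) \in gG'\}$ and $\mathcal{Y}_2 = \{V : \Pi(V) \in G'\}$. Multiplying on the left by $A^i$ and again using step (i), these map to $\mathcal{Y}_1^i = \{V : \Pi(V) \in h_i g G'\}$ and $\mathcal{Y}_2^i = \{V : \Pi(V) \in h_i G'\}$. I would then check that left multiplication respects the unit partition: if $(W_1,\ldots,W_d)$ realizes a $Z$-diagonal of type $Z'$ in $G[d]$, then the same collection realizes an $(A^i * Z)$-diagonal of type $(A^i * Z')$, so the parity of $d$ linking two tuples is preserved under the coset shift, and the two units of $\mathcal{U}_1$ map bijectively to the two units of $\mathcal{U}_i$.

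\textbf{Main obstacle.} The delicate point is step (i): since $G$ need not be abelian, I must be careful that $\Pi$ is not literally multiplicative on $G^n$, only multiplicative modulo $G'$. The cleanest way to handle this is to pass to the quotient $G/G'$ from the outset — the natural projection $G \to G/G'$ induces a map $G^n \to (G/G')^n$ under which $\Pi$ becomes the honest product in the abelian group $G/G'$, where order and bracketing are irrelevant. Once the coset identity $\Pi(A^i * Z)G' = \Pi(A^i)G' \cdot \Pi(Z)G'$ is secured this way, the remaining verifications (that left translation by $A^i$ carries units to units and preserves the parity structure governing the diagonal relation) are routine consequences of Lemma~\ref{subgroupU1} and the definition of units, and I would not belabor them.
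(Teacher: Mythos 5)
Your proposal is correct and follows essentially the same route as the paper: both rest on Lemma~\ref{subgroupU1} (each $\mathcal{U}_i$ is a left coset of $\mathcal{U}_1$ in $G^n$), on the characterization of $\mathcal{U}_1$ and its units in Lemma~\ref{U1descr}, and on the fact that $\Pi$ is multiplicative modulo $G'$ (which is how the paper invokes condition (3) of the Hall--Paige table). The only difference is the closing step: the paper establishes one inclusion, e.g. $\mathcal{U}_i \subseteq \{V \,|\, \Pi(V) \in h_i G'\}$, and then deduces equality from a cardinality comparison, whereas you obtain both inclusions directly from the translation bijection $Z \mapsto A^i * Z$ together with the converse direction of Lemma~\ref{U1descr} --- an equally valid and slightly more self-contained finish.
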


\begin{proof}
By Lemma~\ref{subgroupU1}, each equivalence class $\mathcal{U}_i$ is a left coset of the class $\mathcal{U}_1$. Therefore, all classes $\mathcal{U}_i$ have the same size and the same number of units as the equivalence class $\mathcal{U}_1$, for which these quantities were found in Lemma~\ref{U1descr}.

Consider a tuple $V$ from an equivalence class $\mathcal{U}_i$ and assume that $\Pi(V)$ belongs to some coset $h_i G'$. By the definition of equivalence classes, for every other tuple $U$ from the class $\mathcal{U}_i$  there is some collection of permutations $(W_1, \ldots, W_d)$ such that $U = V * W_1 * \cdots * W_d$. 
Using the condition (3) of the definition of Hall--Paige groups, we see that if $G$ is a Hall--Paige group, then $\Pi(U)$ belongs to the same coset $h_i G'$. In the case when $G$ is a non-Hall--Paige group, if $d$ is even then $\Pi(U)$ belongs to the coset $h_i G'$ and if $d$ is odd then $\Pi(U) \in h_igG'$.

The equalities $\mathcal{U}_i = \{V | \Pi(V) \in h_iG' \}$ for Hall--Paige groups and $\mathcal{Y}_1^i = \{V | \Pi(V) \in h_igG' \}$ and $\mathcal{Y}_2^i = \{V | \Pi(V) \in h_iG' \}$ for non-Hall--Paige groups follow from equalities between the cardinalities of these sets.  
\end{proof}

At last, we are ready to prove the general result on diagonals in iterated groups.

\begin{teorema} \label{groupgeneral}
Let $G$ be a group of order $n$ with the commutator subgroup $G'$ and let $U,V \in \mathcal{I}_n^n$.
\begin{enumerate}
\item Assume that $G$ is a Hall--Paige group. 
\begin{itemize}
\item If $\Pi(U)$ and $\Pi(V)$ belong to different cosets of $G'$, then there are no $U$-diagonals of type $V$ in all $d$-iterated groups $G[d]$. 
\item If $\Pi(U)$ and $\Pi(V)$ belong to the same coset of $G'$, then there is some $d_0$ such that for all $d \geq d_0$ the $d$-iterated groups $G[d]$ have $U$-diagonals of type $V$. The number $T_{U,V} (d)$ of $U$-diagonals of type $V$ in $G[d]$ is asymptotically
$$T_{U,V} (d) = \frac{n!^{d}}{|G'| n^{n-1}}   \cdot (1 + o(1)). $$
\end{itemize}
\item Assume that $G$ is a non-Hall--Paige group and $g$ is the unique element of order $2$ from a nontrivial, cyclic Sylow $2$-subgroup of $G$.
\begin{itemize}
\item If  $\Pi(U)$ and $\Pi(V)$ are not from the union of cosets $hG' \cup hgG'$ for some $h \in G$, then there are no $U$-diagonals of type $V$ in all $d$-iterated groups $G[d]$. 
\item There is $d_0 \in \mathbb{N}$ such that the following hold.  If one can find $h \in G$ for which $\Pi(U), \Pi(V) \in hG'$, then the $d$-iterated groups $G[d]$ have $U$-diagonals of type $V$ for all even $d \geq d_0$, and if $\Pi(U) \in h G'$, $\Pi(V) \in gh G'$ for some $h \in G$, then the $d$-iterated groups $G[d]$ have $U$-diagonals of type $V$ for all odd $d \geq d_0$. Otherwise $G[d]$ has no $U$-diagonals of type $V$. If $d$ has an appropriate parity, then the number $T_{U,V} (d)$ of $U$-diagonals of type $V$ in $G[d]$ is asymptotically
$$T_{U,V} (d) = \frac{n!^{d}}{|G'| n^{n-1}}   \cdot (1 + o(1)).$$
\end{itemize}
\end{enumerate}
\end{teorema}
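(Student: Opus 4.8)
The plan is to deduce this theorem directly from the general Theorem~\ref{quasigeneral} by substituting the explicit description of equivalence classes and units furnished by Lemmas~\ref{U1descr} and~\ref{classdescr}. Theorem~\ref{quasigeneral} already reduces everything to three pieces of data: which tuples share an equivalence class, the period $\tau_i$ of that class together with the residue governing when a $U$-diagonal of type $V$ exists, and the common unit size $r_i n^{n-1}$. For groups all three of these are controlled by the single invariant $\Pi(V)$ modulo $G'$, so the bulk of the work is translation rather than new estimation.

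First I would treat the Hall--Paige case. By Lemma~\ref{classdescr}(1) each equivalence class $\mathcal{U}_i$ has period $\tau_i = 1$ (a single unit) and equals $\{V : \Pi(V) \in h_i G'\}$ for some $h_i \in G$. Hence $U$ and $V$ lie in the same class precisely when $\Pi(U)$ and $\Pi(V)$ belong to the same coset of $G'$; when they lie in different cosets, Theorem~\ref{quasigeneral} yields no $U$-diagonals of type $V$ for any $d$, giving the first bullet. When they share a class the period is $1$, so the congruence $d \equiv k \mod \tau_i$ is vacuous and diagonals occur for all large $d$. The unit size is $|G'| n^{n-1}$ by Lemma~\ref{U1descr}(1), so $r_i = |G'|$ and Theorem~\ref{quasigeneral} delivers the stated asymptotic $T_{U,V}(d) = \frac{n!^{d}}{|G'| n^{n-1}}(1 + o(1))$.

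Next I would handle the non--Hall--Paige case, where the only genuine bookkeeping lies. By Lemma~\ref{classdescr}(2) every class $\mathcal{U}_i$ has period $\tau_i = 2$ and splits into units $\mathcal{Y}_1^i = \{V : \Pi(V) \in h_i g G'\}$ and $\mathcal{Y}_2^i = \{V : \Pi(V) \in h_i G'\}$, so the class itself is $\{V : \Pi(V) \in h_i G' \cup h_i g G'\}$. Thus $U$ and $V$ lie in a common class exactly when $\Pi(U), \Pi(V)$ lie in some union $h G' \cup h g G'$; otherwise Theorem~\ref{quasigeneral} forbids diagonals for all $d$, which is the first bullet. When they do share a class, I invoke the defining property of units: a $U$-diagonal of type $V$ can occur in $G[d]$ only if $l - k \equiv d \mod 2$, where $U \in \mathcal{Y}_k^i$ and $V \in \mathcal{Y}_l^i$. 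If $\Pi(U), \Pi(V) \in h_i G'$ both sit in $\mathcal{Y}_2^i$, so $k = l$ and diagonals appear only for even $d$; if instead $\Pi(U) \in h_i G'$ and $\Pi(V) \in h_i g G' = g h_i G'$ they sit in different units, so $k \neq l$ and diagonals appear only for odd $d$ (here $h_i g G' = g h_i G'$ since $G/G'$ is abelian, matching the form $ghG'$ in the statement). For the admissible parity Theorem~\ref{quasigeneral} again gives $\frac{n!^{d}}{r_i n^{n-1}}(1 + o(1))$ with unit size $r_i n^{n-1} = |G'| n^{n-1}$ by Lemma~\ref{U1descr}(2), hence $r_i = |G'|$. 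This value holds uniformly over all classes because, by Lemma~\ref{subgroupU1}, every $\mathcal{U}_i$ is a coset of $\mathcal{U}_1$ and therefore inherits its size and unit structure.

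I do not expect a serious obstacle, since all the analytic content was absorbed into the Perron--Frobenius input to Theorem~\ref{quasigeneral} and into the structural lemmas. The one point demanding care is the parity accounting in the non--Hall--Paige case: one must match the abstract residue condition $l - k \equiv d \mod 2$ to the correct cosets and check that the unit labelling of Lemma~\ref{classdescr}(2) is consistent with equal cosets forcing even $d$ and the $g$-shifted coset forcing odd $d$. Verifying that $r_i = |G'|$ for every class, rather than only for $\mathcal{U}_1$, is the other place where I would explicitly cite the coset structure from Lemma~\ref{subgroupU1} instead of treating the uniformity as self-evident.
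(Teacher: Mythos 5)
Your proposal is correct and takes essentially the same route as the paper: the paper's own proof is exactly the one-line deduction from Theorem~\ref{quasigeneral} combined with the group-specific characterization of equivalence classes and units in Lemma~\ref{classdescr} (which itself rests on Lemmas~\ref{subgroupU1} and~\ref{U1descr}). Your parity bookkeeping in the non--Hall--Paige case and your appeal to the coset structure for the uniform unit size $r_i = |G'|$ are precisely the details the paper leaves implicit.
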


\begin{proof}
The result follows from Theorem~\ref{quasigeneral}, which describes how the number of $U$-diagonals of type $V$ is connected with the structure of equivalence classes and units, and from Lemma~\ref{classdescr}, where these sets were characterized for groups.
\end{proof}

\section{Proofs of the main results}

We start with a proof of the theorem on transversals in iterated quasigroups. 

\begin{proof}[Proof of Theorem~\ref{quasitransintro}]
Recall that a transversal in a multiary quasigroup is a $\mathbb{W}$-diagonal of type $W$, where $W$ is a  permutation. Given a quasigroup of order $n$, there are exactly $n!$ possible types of diagonals corresponding to transversals. 

Let us fix some permutation $W \in \mathcal{W}$.  By Proposition~\ref{permclosed} and Lemma~\ref{transclass}, the permutations $\mathbb{W}$ and $W$ belong to the same unit of the equivalence class $\mathcal{U}_1$ and the period of the class $\mathcal{U}_1$ is not greater than $2$.  By Proposition~\ref{loopimpr}, if $G$ is a loop then the size of units in the equivalence class $\mathcal{U}_1$ is $rn^{n-1}$, where $1 \leq r \leq |G'|$.

To count the number of $\mathbb{W}$-diagonals of type $W$ in an iterated quasigroup $G[d]$, it only remains to apply Theorem~\ref{quasigeneral}.
\end{proof}

\begin{proof}[Proof of Theorem~\ref{grouptransintro}]
The proof is similar to the proof of Theorem~\ref{quasitransintro}, but instead of Theorem~\ref{quasigeneral} we use Theorem~\ref{groupgeneral}.

If $G$ is a Hall--Paige group, then all $d$-iterated groups have transversals, because, by condition (2) of the definition of Hall--Paige groups, $G[1]$ has a transversal and, by the proof of Lemma~\ref{transclass}, for each permutation $W \in \mathcal{W}$ there exists a $W$-diagonal of type $W$ in $G[2]$.
\end{proof}

At last, we prove a similar result for near transversals.

\begin{proof}[Proof of Theorem~\ref{nearintro}]
In our terms, a near transversal in a quasigroup of order $n$ is an arbitrary $\mathbb{W}$-diagonal of type $W_i(b)$, where $W \in \mathcal{W}$ is a permutation and $i,b\in \mathcal{I}_n$.

By Proposition~\ref{Vinunits}, for every permutation $W \in \mathcal{W}$ and $i \in \mathcal{I}_n$ the number of tuples from the set $W_i^*$ in each unit of the class $\mathcal{U}_1$ is equal to some $r \geq 1$.  It means that there is $d_0 \in \mathbb{N}$ such that  for all $d \geq d_0$ every $d$-iterated quasigroup $G[d]$ has near transversals.

If Brualdi's conjecture is true, then for all quasigroups $G$ there is a $\mathbb{W}$-diagonal of type $W_i(b)$ in  $G[1]$.  Using Lemma~\ref{transclass}, we have that in this case all $d$-iterated quasigroups have near transversals. For groups, Brualdi's conjecture is true by~\cite{GodHal.grnear}.

Assume that a tuple $W_i(b)$, $W \in \mathcal{W}$, belongs to a unit $\mathcal{Y}$ of the equivalence class $\mathcal{U}_1$. If the tuple $W_i(b)$ coincides with the permutation $W$, then, by Proposition~\ref{permclosed}, the unit $\mathcal{Y}$ contains all $n!$ permutations from $\mathcal{W}$. If the tuple $W_i(b)$ is not a permutation, then Proposition~\ref{permclosed} and permutations of coordinates of the tuple $W_i(b)$ give $\frac{n}{2} n!$ different tuples in the unit $\mathcal{Y}$. 

Therefore, if a unit $\mathcal{Y}$ of the equivalence class $\mathcal{U}_1$ contains permutations, then there are  exactly $(\frac{n}{2}(r-1) +1)n!$ tuples of the form $W_i(b)$ in $\mathcal{Y}$, otherwise, the number of such tuples in $\mathcal{Y}$ is $\frac{n}{2} r n!$.

The resulting asymptotic of the number of near transversals follows from Theorems~\ref{quasigeneral} and~\ref{groupgeneral}.
\end{proof}

\section{Concluding remarks}

There are many ways to extend the technique described in the present paper.
First of all, in one of our subsequent papers we are going to consider diagonals in a general composition of multiary quasigroups and connect them with the permanents and contractions of multidimensional matrices. In particular, we study the composition not only of quasigroups and latin hypercubes but other stochastic multidimensional arrays.

Another direction of future work is finding explicit formulas for the numbers of transversals and diagonals of other types in some iterated quasigroups of small orders or arity.

At last, in paper~\cite{my.iter}, the author estimated the asymptotic numbers of partial diagonals,  plexes, and multiplexes in iterated quasigroups. Using similar methods, these bounds can be significantly refined. Moreover, it is possible to generalize our approach to other structures in latin hypercubes, for example, to subcubes or trades.

\section*{Acknowledgements}
The work was carried out within the framework of the state contract
of the Sobolev Institute of Mathematics (project no. 0314-2019-0016) and supported in part by the Young Russian Mathematics award.  The author is grateful to anonymous referees for the correction of many small mistakes and inaccuracies in the manuscript.

\begin{bibdiv}
    \begin{biblist}[\normalsize]
    \bibselect{biblio}
    \end{biblist}
    \end{bibdiv}

\end{document}